\theoremstyle{plain}
\newtheorem{theorem}{Theorem}[section]
\newtheorem{lemma}[theorem]{Lemma}
\theoremstyle{definition}
\theoremstyle{remark}
\newcommand{\la}{\langle}
\newcommand{\ra}{\rangle}
\newcommand{\lam}{\lambda}
\newcommand{\R}{\mathbb{R}}
\newcommand{\T}{\mathcal{T}}
\newcommand{\HH}{\mathcal{H}}
\newcommand{\U}{\mathcal{U}}
\newcommand{\tr}{\mathrm{tr}}
\newcommand{\SOC}{{\rm SOC}}
\DeclareMathOperator{\expl}{{\bf exp}}
\DeclareMathOperator{\lnl}{{\bf ln}}
\def\bpm{\begin{pmatrix}}
\def\epm{\end{pmatrix}}
\def\bal{\begin{aligned}}
\def\eal{\end{aligned}}
\newcommand{\cc}{\mathcal{K}}
\newcommand{\cj}{\mathcal{J}}
\newcommand{\sfT}{{\sf T}}
\newcommand{\bi}{\begin{itemize}}
\newcommand{\ei}{\end{itemize}}
\newcommand{\beq}{\begin{equation}}
\newcommand{\eeq}{\end{equation}}
\newcommand{\itr}{{\rm int}}
\newcommand{\ben}{\begin{enumerate}}
\newcommand{\een}{\end{enumerate}}
\newcommand*\innerprod[2]{\left\langle #1, #2 \right\rangle}
\DeclareMathOperator{\inte}{int}
\newcommand{\email}[1]{\href{mailto:#1}{\texttt{#1}}}
\DeclareMathOperator*{\argmin}{arg\,min}
\newcommand*\NO{\R_{+}}
\newcommand*\psdc{\mathrm{S}_{+}}
\newcommand*\ball{\mathbb{B}}
\newcommand*\bd{\ball^{d}}
\newcommand*\hb{\frac{1}{2} \bd}
\newcommand*\sumt{\sum_{t=1}^T}
\newcommand*\ta{\frac{1}{T} \sumt}
\DeclarePairedDelimiter\abs{\lvert}{\rvert}%
\DeclarePairedDelimiter{\norm}{\lVert}{\rVert}
\DeclareMathOperator{\nexp}{{\bf norm-exp}}
\newcommand*\scnexp[1]{\text{\sc{e}-}{#1}}
\title{Multiplicative updates for online convex optimization over symmetric cones}
\author{%
  Ilayda Canyakmaz\\  SUTD\\
  \email{ilayda\_canyakmaz@sutd.edu.sg}\\
  \And
  Wayne Lin\\  SUTD\\
  \email{wayne\_lin@mymail.sutd.edu.sg}\\
  \AND
  Georgios Piliouras\\  SUTD\\
  \email{georgios@sutd.edu.sg}\\
  \And
  Antonios Varvitsiotis\\
  SUTD\\
  \email{antonios@sutd.edu.sg}
  }
\date{}
\begin{document}

\maketitle

\begin{abstract}
We study online convex optimization where the possible actions are trace-one elements in a symmetric  cone, generalizing the extensively-studied experts setup and its quantum counterpart.  Symmetric cones provide a  unifying framework for some of the most important optimization models, including linear, second-order cone, and semidefinite optimization. 
Using tools from the field of  Euclidean Jordan Algebras, we introduce the Symmetric-Cone Multiplicative Weights Update~(SCMWU), a projection-free algorithm for online optimization over the trace-one slice of an arbitrary symmetric cone.  
We show that SCMWU is equivalent to Follow-the-Regularized-Leader and Online Mirror Descent with symmetric-cone negative entropy as regularizer. Using  this structural result  we show that SCMWU is a no-regret algorithm, and verify our theoretical results with extensive experiments. Our results unify and generalize the analysis for the Multiplicative Weights Update method over the probability simplex and the Matrix Multiplicative Weights Update method over the set of density matrices.  
\end{abstract}

\section{Introduction}
 A key challenge in modern machine learning  applications    is the ability to make decisions   in real time and with   incomplete knowledge of the future. A  popular  approach to addressing this problem is   the framework of online  optimization,
  where at each epoch decisions have to be  made  before the corresponding costs are revealed. 
  Online  optimization has found  applications in numerous areas including  finance, control, recommendation systems, and game theory, e.g., see  \cite{BOO:H19,bubeck,shwartz}.
  
 Of particular importance is the setting of online convex optimization (OCO), where both the loss functions and the decision set are convex \cite{INP:Z03}. 
 In the OCO framework, at each time step $t=1,2,\cdots,T$  an   algorithm chooses a point $p_t$ from a    convex set~$\U$ of possible decisions. After committing to this choice, an adversary  reveals a convex cost function $f_t:\U\to \R$ and the algorithm suffers the loss $f_t(p_t)$. A natural benchmark for the performance of an online algorithm 
  is the notion of \emph{regret}, defined as the difference between the expected cumulative loss of the algorithm compared to the loss incurred by the best fixed decision in hindsight. An online algorithm is called no-regret if the time-averaged regret goes to zero,~that~is, as $T\to +\infty$
$$\frac{1}{T}\left(\sum_{t=1}^Tf_t(p_t)-\min_{u\in \U}\sum_{t=1}^Tf_t(u)\right)\to0.$$

 A special case of the OCO framework that has been extensively studied is online linear optimization over the simplex, also known as the experts setting.
 An  algorithm for the experts setting has to choose at  each time step   among $n$ possible actions (so the setting is sometimes referred to as the $n$-experts setting), and  only  after that gets to observe the actions' losses $m_t(1),\ldots, m_t(n)$. 
 One of the most extensively studied no-regret algorithms for the experts setting is the   \emph{Multiplicative Weights Update}~(MWU) that has been rediscovered in diverse fields such as machine learning, game theory and optimization \cite{freund,vovk,lwar,ART:AHK12}. At each time step $t$,   MWU maintains a probability distribution $p_t$ over the $n$ available actions (i.e., an $n$-dimensional probability simplex vector) and incurs expected loss  $\la p_t,m_t\ra=\sum_\ell p_t(\ell)m_t(\ell)$, i.e., a linear function of $p_t$. The probability $p_t(\ell)$ that the $\ell$-th action is picked  is adjusted after the actions' losses are revealed according to the multiplicative rule
 $$p_{t+1}(\ell)=p_t(\ell) \frac{ \exp(-\eta m_t(\ell))} {\sum_\ell p_t(\ell) \exp(-\eta m_t(\ell))} \quad \forall t \geq 0,$$
 or equivalently
 \begin{equation}\label{MWU}\tag{MWU}
 p_{t+1}=\frac{ \expl(-\eta \sum_{i=1}^t m_i)} {\tr(\expl(-\eta \sum_{i=1}^t m_i))} \quad \forall t \geq 0,
 \end{equation}
where $\expl$ is the exponential function applied componentwise and $\tr$ denotes the sum of the components  of a vector. 
Another important instance of the OCO framework   is  online  linear optimization over the set of density matrices,~i.e., Hermitian positive semidefinite (PSD) matrices with trace equal to one, or equivalently online quadratic optimization over unit vectors.  An online algorithm for this setup maintains at each time step $t$  a density matrix  $p_t$ and incurs  linear loss $\la p_t,m_t\ra=\tr(p_tm_t)$, where $\tr$ denotes  the standard matrix trace.   This setting   can be thought as  a  non-commutative extension of the experts setup as density matrices correspond to  quantum probability distributions and   diagonal density matrices are classical probability distributions. 
A natural non-commutative 
analogue of  MWU for no-regret learning  in the quantum version of the experts setup is the {\em Matrix Multiplicative Weights Update}~(MMWU) \cite{ART:TRW05,INP:WK06,ART:AK16, kale} given by:
 \begin{equation}\label{MMWU}\tag{MMWU}
 p_{t+1}=\frac{ \expl(-\eta \sum_{i=1}^t m_i)} {\tr(\expl(-\eta \sum_{i=1}^t m_i))} \quad \forall t \geq 0,
 \end{equation}
where here  $\expl$ denotes the matrix exponential. Beyond online optimization,  MWU and MMWU have found numerous applications that include learning in games and solving linear and semidefinite programs e.g., see  \cite{ART:AHK12}. MMWU has found additional applications in quantum complexity theory \cite{jain2011qip} and graph sparsification  \cite{spectral}.

Both \ref{MWU} and \ref{MMWU} can also be recovered as   instances of two  well-known algorithmic frameworks, namely  \emph{Follow-the-Regularized-Leader} (FTRL) \cite{shwartz} and {\em Online Mirror Descent} (OMD), which is based on the Mirror Descent framework \cite{nem}. 
 The FTRL algorithm selects at each time step $t$ 
 the point that minimizes the cumulative loss so far  plus a strictly convex regularization term $\Phi(\cdot)$, i.e.,
$$p_{t+1}=\argmin_{p\in \U} \Big\{\sum_{i=1}^{t}f_i(p)+\Phi(p)
\Big\}
\quad \forall t \geq 0.$$
 On the other hand, the OMD algorithm tries to minimize the most recent loss function without moving ``too far'' from the previous iterate $p_t$, i.e.,  
\begin{align*}
   p_1&=\underset{p\in \U }{\argmin} \ \Phi(p)\\
 p_{t+1}&=\underset{p\in \U }{\argmin} \Big\{ \eta f_t(p) + \HH_\Phi(p,p_t) \Big\}  \quad \forall t \geq 1  \end{align*} 
where $\HH_\Phi(\cdot,\cdot)$ is the Bregman divergence with respect to the regularizer $\Phi$.
It is well known that under certain conditions the FTRL and OMD viewpoints are equivalent, e.g., see  \cite{INP:HK08}. Finally, when 
the set $\U$ is the set of simplex vectors and $\Phi(x)=\sum_\ell x_\ell\log x_\ell$ is the negative entropy, it follows that  $\HH_\Phi(x,y)$ is the relative entropy (or KL-divergence) and  FTRL and OMD coincide with MWU, e.g. see \cite{shwartz}. Analogously, when $\U$ is the set of density matrices and $\Phi(x)=\tr(x\log x)$ is the negative von Neumann entropy, then $\HH_\Phi(x,y)=\tr(x\log x)-\tr(x\log y)$ is the  quantum relative entropy and FTRL and OMD coincide with MMWU, e.g., see~\cite{spectral}.

{\bf This work: Online convex optimization over symmetric cones.} In this paper we introduce a framework that unifies  and generalizes online convex optimization over the simplex and density matrices.  
    In the  experts setup (both the classical and the quantum version) the state of \ref{MWU} and \ref{MMWU} is an element  in the intersection of a proper cone with a hyperplane. Specifically, in the classical experts case, the simplex can be expressed as the  intersection of the nonnegative orthant with the hyperplane $\{x: \sum_l x_l =1\}$. Similarly, in the quantum version of the experts setup, the set of density matrices is the intersection of the cone of Hermitian PSD matrices with the hyperplane $\{X:\tr(X)=1$\}. Both the nonnegative orthant and the PSD cone are  instances of symmetric cones, i.e.,  self-dual and homogeneous convex cones (see Section \ref{sec:jasc}). Symmetric cones are fundamental in mathematical optimization as they provide a common framework for studying linear optimization over the nonnegative orthant (linear programs), over the second-order cone (second order cone programs), and over the cone of positive semidefinite matrices (semidefinite programs). 
    
    Every symmetric cone is equipped with a trace, that in the case of the orthant is the sum of the entries and in the PSD case is the usual matrix trace. A  trace-one element in a symmetric cone is a distribution over primitive idempotents (see Section \ref{sec:jasc}): in the   simplex case this  is a distribution over the standard basis vectors whereas for  density matrices this is a   distribution over rank-one projectors that sum up to identity.   As trace-one symmetric cone elements generalize both simplex vectors and density matrices, it is natural to ask whether the multiplicative weights framework can be extended for online optimization over  symmetric cones. 
    
    In this work we answer this question in the affirmative by  
 introducing the framework of {\em Online Symmetric-Cone Optimization} (OSCO). 
Specifically, we  consider the setting where at each time step   a decision maker  chooses a trace-one element $p_t$  over  a symmetric cone  $\cc$
 and incurs linear~loss:
\vspace{2mm}
\begin{mdframed}[style=MyFrame]
\begin{tabular}{ll} 
 \label{alg:the_alg}
{\bf Online Symmetric-Cone Optimization (OSCO)} \vspace{2mm}\\
Choose $p_1\in \cc$ with $\tr(p_1)=1$\\
{For } $t=1,\ldots ,T$:\\
\quad Receive linear loss $\la m_t,p_t\ra$, get as feedback the loss vector $m_t$ \\
\quad Compute  new iterate  $p_{t+1}\in \cc$ with $\tr(p_{t+1})=1$
 \end{tabular}
 \end{mdframed}
 
 As explained previously, the OSCO  framework  provides a common generalization of the experts setup and its quantum counterpart. 
To study OSCO we  introduce the {\em Symmetric-Cone Multiplicative Weights Update}~(SCMWU), which unifies and generalizes \ref{MWU} and \ref{MMWU}. Similarly to MWU and MMWU, to define SCMWU we  use that  the interior of a symmetric cone is parametrized by an  exponential map defined on the underlying Euclidean Jordan Algebra. 
Using an appropriate notion of entropy for symmetric cones (and a corresponding notion of Bregman divergence) we show that SCMWU can be also expressed within the FTRL and OMD algorithmic frameworks. 
Finally, using this equivalence, we show that SCMWU is a no-regret algorithm.   

{\bf Paper organization.}  
In Section \ref{sec:jasc}, we review Euclidean Jordan Algebras and symmetric cones, followed by definitions of entropy and Bregman divergence for symmetric cones. In Section \ref{sec:OSCO}, we describe the OSCO framework and introduce the SCMWU algorithm.  In Section \ref{sec:equivalence}, we prove the equivalence of SCMWU, FTRL and OMD, and in Section \ref{sec:regret} we show that SCMWU has vanishing regret. In Section \ref{sec:_SCMWU-ball}, we study the specific application of SCMWU to online learning over the ball, and finally in Section \ref{sec:experiment2} we 
numerically evaluate the performance of~SCMWU.

\section{Preliminaries}\label{sec:jasc}
\paragraph{Euclidean Jordan Algebras.}
In this section we give a brief introduction  to Euclidean Jordan Algebras (EJAs),  symmetric cones, and their connections. 
For additional details   the reader is referred to  \cite{BOO:FK94} and \cite{PHD:V07}. Additional details on symmetric cones  can be found in \cite{vanden}.

Let $\cj$ be a finite-dimensional vector space with a bilinear product $\circ:\cj\times\cj\to\cj$. The tuple  $(\cj,\circ)$ is called a Jordan algebra if for all $x,y\in\cj$,
$$x\circ y=y\circ x,$$
$$x^2\circ(x\circ y)=x\circ (x^2\circ y),$$
where $x^2=x\circ x$. If there exists $e\in\cj$ such that
$e\circ x=x\circ e=x$ for all $x\in\cj$
then $e$ is called an identity element, which is necessarily unique. 

 A Jordan Algebra $(\cj,\circ)$ over $\R$ is called Euclidean if there exists an  inner product
$\langle\cdot,\cdot\rangle$ that is associative, i.e.,   for all $x,y,z\in\cj$
$$\langle x\circ y,z\rangle=\langle y,x\circ z\rangle.$$

An element $q\in\cj$ is an idempotent if $q^2=q$. An idempotent is said to be primitive if it is nonzero and cannot be written as the sum of two nonzero idempotents. A collection of primitive idempotents $q_1,\cdots,q_m$ form a Jordan frame if
$$q_i\circ q_j=0 \ \forall i\neq j\  \text{ and }\  \sum_{i=1}^mq_i=e.$$

Consider a rank-$r$  EJA $(\cj,\circ)$. For every $x\in\cj$ there exist real numbers $\lambda_1,\cdots,\lambda_r$ and a Jordan frame $q_1,\cdots,q_r$ such that $x=\sum_{i=1}^r\lambda_iq_i,$
known as the type-II spectral decomposition of $x$, e.g., see Theorem III.1.2 in \cite{BOO:FK94}.
The scalars  $\lambda_i$  are  called the eigenvalues of $x$, and  are uniquely specified up to reordering and accounting for multiplicities. The  L\"{o}wner extension 
of a   scalar-valued function $f:\mathcal{D}\subseteq \R \to\R$, denoted by $\mathbf{f}$,  maps elements of the EJA with spectrum in~$\mathcal{D}$ to elements of the EJA as follows:

$$\mathbf{f}:\quad \sum_{i=1}^r\lambda_i q_i \mapsto  \sum_if(\lambda_i)q_i\in~\cj.$$

Moreover, the {trace} of $x$ is given by
${\rm tr}(x)=\sum_{i=1}^r\lambda_i.$
Note that ${\rm tr}(x\circ y)$ is a symmetric bilinear mapping that is positive definite (see e.g., Proposition III.1.5 in \cite{BOO:FK94}) and associative (see e.g., Proposition II.4.3 in \cite{BOO:FK94}), i.e., it is an associative inner product. In the following, the inner product of the EJA is fixed as $\langle x, y\rangle={\rm tr}(x\circ y)$, and $\|x\|=\sqrt{\la x,x \ra}$ is the induced~norm.

\paragraph{Symmetric cones.}
\noindent Given an EJA $(\cj,\circ)$, its cone of squares is defined as the set
$$\cc(\cj):=\{x^2:x\in\cj\},$$
where to simplify notation we  typically   omit dependency on the EJA $\cj$.
 The cone of squares  $\cc$  is a proper cone  (it is self-dual with respect to the inner product $\la x,y\ra={\rm tr}(x\circ y)$) and so it induces the partial ordering on $\cj$ 
\[
    x \preceq_{\cc} y \Longleftrightarrow x - y \in \cc,
\]
see, e.g., Section 2.4 in \cite{BOO:B04}. 
 Equivalently,  $\cc \  ({\rm int}(\cc))$ is the set of elements of $x=\sum_i\lambda_iq_i\in \cj$ whose eigenvalues are all nonnegative (strictly positive) (see, e.g., Proposition~2.5.10 in  \cite{PHD:V07}), i.e., 
 \begin{align*} 
 &x \in \cc \iff   \lambda_i\ge 0 \ \forall i \iff x\succeq_\cc 0\\
  & x \in {\rm int}(\cc) \iff  \lambda_i >0\ \forall i \iff x\succ_\cc 0.
 \end{align*}

Moreover, the cone of squares of an EJA $(\cj, \circ)$ is a symmetric cone, i.e., 
it is a convex cone satisfying the following two conditions:
\begin{itemize}
    \item [$(i)$] $\cc$ is self-dual, i.e., 
    $$\cc^*=\{y\in \cj: \langle y,x\rangle\ge 0 \ \forall x\in \cc\} = \cc,$$
    \item [$(ii)$] $\cc$ is homogeneous, i.e., for any  $u,v\in{\rm int}(\cc)$ 
    there exists an invertible linear transformation $\T:\cj \to \cj$ such that $\T(u)=v$ and $\T(\cc)=\cc,$
\end{itemize}
e.g., see Proposition 2.5.8 in \cite{PHD:V07}.  The converse statement is also true, i.e.,     a  cone is symmetric if and only if it is the cone of squares of some EJA, e.g., see Theorem III.3.1 in  \cite{BOO:FK94}.  This fact combined  with the classification of all finite-dimensional EJAs (e.g., see \cite{BOO:FK94}) implies  that any symmetric cone is isomorphic to a direct sum of the following primitive~cones:
\begin{itemize}
   \item Positive semidefinite matrices   over the reals/complex numbers/quaternions, 
    \item Second-order (aka Lorentz) cones, and 
   \item  3-by-3 PSD matrices over the field of octonions.
\end{itemize}

In view of this classification result, symmetric cones provide a common language for studying  linear conic optimization problems over the nonnegative orthant (linear programs), over the second-order cone (second order cone programs), and over the cone of positive semidefinite matrices (semidefinite programs). Linear, second-order cone, and semidefinite programs are    some of the most important mathematical optimization models with extensive modeling power and efficient algorithms for computing high quality solutions.

\paragraph{Entropy and Bregman divergence for symmetric cones.} 

The L\"{o}wner extensions  corresponding to the scalar exponential  $\exp(\cdot)$ and $\ln(\cdot)$ functions  are defined as
\begin{align*}
\expl: \ &\cj\to {\rm int}(\cc), \quad x=\sum_{i=1}^r\lambda_iq_i \mapsto\sum_{i=1}^r \exp(\lambda_i) q_i\\
\lnl : \ & {\rm int}(\cc)\to \cj, \quad x=\sum_{i=1}^r\lambda_iq_i \mapsto\sum_{i=1}^r \ln(\lambda_i) q_i,
\end{align*}
and are inverses to each other, see Lemma \ref{expvslog}. 
For any symmetric cone $\cc$ there exists a corresponding notion of entropy whose domain is the interior of the cone. Specifically,  the symmetric-cone negative entropy (SCNE) of  an element $x=\sum_i\lambda_iq_i\in {\rm int} (\cc)$ is defined by
\begin{equation}\label{entropy}\tag{SCNE}
\Phi(x)={\rm tr}(x\circ \lnl x)=\sum_i\lambda_i\ln(\lambda_i),
\end{equation}
see, e.g., \cite{ART:CP10}. Note that the \ref{entropy} is well defined as $x=\sum_i\lambda_iq_i\in {\rm int}(\cc) $ iff all eigenvalues $\lambda_i$ are  positive.   

When $\cc=\R^d_+$ is the nonnegative orthant, $\Phi(x)$ is the usual negative entropy of a vector $x\in {\rm int}(\R^d_{+})$ with strictly positive entries, e.g. see~\cite{BOO:B04}. Furthermore, when  $\cc=\mathcal{S}^d_+$  is the positive semidefinite cone (with Hermitian entries), 
$\Phi(x)$ is the negative von Neumann entropy of the unnormalized quantum state $x\in {\rm int}(\mathcal{S}^d_{+}$), e.g. see~\cite{carlen}. 
 In the  next result we summarize  some important  properties of the entropy. 
 
\begin{lemma}\label{lem:entropy} The \ref{entropy} $\Phi(x )={\rm tr}(x\circ \lnl x)$ corresponding to a symmetric cone $\cc$ is a mirror map, i.e., it~satisfies:
\begin{enumerate}[(i)]
    \item\label{lem:entropy1} $\Phi(x)$ is continuously differentiable on $x\in{\rm int}(\cc)$ and 
    $\nabla  \Phi(x)=\lnl x+e.$
    \item $\Phi(x)$ is strictly convex on $\cc.$\label{lem:entropy2}
    \item 
    $\displaystyle \lim_{x\to \partial \cc} \|\nabla  \Phi(x)\|=+\infty$.\label{lem:entropy3}
    \item $\nabla \Phi(x) =\lnl x+e$ is a bijective map 
    from $\inte(\cc)$ to~$\cj$.  Moreover,  its inverse map is given by 
    \begin{equation}\label{lem:entropy4} 
    (\nabla \Phi)^{-1}(x)=\expl(x-e).
    \end{equation} \label{lem:entropy4_actualLemma}
\end{enumerate}
\end{lemma}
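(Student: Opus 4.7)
The plan is to leverage the spectral structure of EJAs throughout: writing $x = \sum_{i=1}^r \lambda_i q_i$ in type-II spectral decomposition, the idempotent relations $q_i\circ q_j = 0$ for $i\neq j$ and $q_i^2 = q_i$ yield $x\circ \lnl x = \sum_i \lambda_i \ln(\lambda_i)\, q_i$, and hence
\[
\Phi(x) = \tr(x\circ \lnl x) = \sum_{i=1}^r \lambda_i \ln(\lambda_i).
\]
So $\Phi$ is the trace-Löwner extension associated to the scalar function $f(t)=t\ln t$. Each of the four properties will then follow from the corresponding scalar property of $f$ together with a standard transfer principle for spectral functions on EJAs.

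For \eqref{lem:entropy1}, I will invoke the general differentiability result for spectral functions on EJAs (see Baes, \cite{PHD:V07}): if $F(x) = \tr(\mathbf{f}(x))$ for a scalar $f$ that is $C^1$ on an open interval containing the spectrum of $x$, then $F$ is $C^1$ at $x$ and $\nabla F(x) = \mathbf{f'}(x)$. Applied with $f(t) = t\ln t$ (so $f'(t)=\ln t + 1$), this gives $\nabla \Phi(x) = \lnl x + e$ on $\inte(\cc)$. For \eqref{lem:entropy2}, the same transfer principle yields that $\tr(\mathbf{f}(x))$ is strictly convex whenever $f$ is strictly convex on the relevant interval; since $f''(t) = 1/t>0$ for $t>0$ and $t\ln t$ extends continuously to $0$ at $t=0$, strict convexity of $\Phi$ on all of $\cc$ follows.

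For \eqref{lem:entropy3}, I will argue via eigenvalues: any sequence $x_n \to x_\star \in \partial \cc$ must have some eigenvalue $\lambda_{i(n)}(x_n)\to 0^+$, because boundary elements of $\cc$ are exactly those with a zero eigenvalue (by the characterization $x\in\inte(\cc)\iff$ all $\lambda_i>0$ quoted in the excerpt). Using the associative inner product, $\|\lnl x\|^2 = \tr((\lnl x)^2) = \sum_i (\ln \lambda_i)^2$, which diverges as any $\lambda_i\to 0^+$. The triangle inequality then gives $\|\nabla \Phi(x_n)\| = \|\lnl x_n + e\|\to +\infty$. Finally, for \eqref{lem:entropy4_actualLemma}, I will use Lemma \ref{expvslog}: since $\expl:\cj\to\inte(\cc)$ and $\lnl:\inte(\cc)\to\cj$ are mutually inverse bijections, the equation $\lnl x + e = y$ has the unique solution $x = \expl(y - e)\in \inte(\cc)$ for every $y\in \cj$, proving both bijectivity and the stated formula for $(\nabla \Phi)^{-1}$.

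The main technical hurdle is justifying the differentiability/strict-convexity transfer in parts \eqref{lem:entropy1} and \eqref{lem:entropy2}: the spectral decomposition is not globally smooth (eigenvalues can collide and Jordan frames are not unique at such points), so one cannot simply differentiate $\sum_i f(\lambda_i)$ coordinate-wise. I would address this by citing the Baes-type spectral calculus for EJAs rather than reproving it, noting that the result is the EJA analogue of the classical Lewis theory for spectral matrix functions. The remaining parts reduce to scalar calculus plus the bijection $\expl\leftrightarrow \lnl$ already recorded in Lemma \ref{expvslog}.
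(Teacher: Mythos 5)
Your proposal is correct and follows essentially the same structure as the paper's proof: delegate parts (i) and (ii) to an external spectral-calculus reference, compute $\|\nabla\Phi(x)\|^2$ eigenvalue-by-eigenvalue for part (iii), and lean on the $\expl/\lnl$ bijection of Lemma~\ref{expvslog} for part (iv). The only substantive difference is in what you cite for (i) and (ii) — the paper points directly to \cite{ART:CP10}, whereas you invoke the more general Baes/Lewis transfer principle for spectral functions on EJAs and (usefully) flag the nonsmoothness-at-eigenvalue-collisions issue that makes a naive ``differentiate $\sum_i f(\lambda_i)$'' argument insufficient; for (iii) the paper computes $\|\lnl x + e\|^2 = \sum_i(\ln\lambda_i+1)^2$ directly rather than bounding $\|\lnl x + e\|$ via the triangle inequality, and for (iv) the paper verifies the inverse formula by the explicit computation $\nabla\Phi(\expl(x-e))=x$ using Lemma~\ref{lem:expe}, while your ``solve $\lnl x + e = y$'' argument is a slightly cleaner way to reach the same conclusion.
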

{\em Proof.} Parts (\ref{lem:entropy1}) and (\ref{lem:entropy2}) are proven in~\cite{ART:CP10}. For part (\ref{lem:entropy3}), given  $x=\sum_i\lambda_iq_i$ we have that  
$$\|\nabla  \Phi(x)\|^2=\|\lnl x+e\|^2=\tr(\lnl x+e)^2={\sum_i(\ln (\lam_i)+1)^2,}$$
which goes to $+\infty$ as $x\to\partial\cc.$ 

(\ref{lem:entropy4_actualLemma}) Given $y=\sum_i\lambda_iq_i \in \inte(\cc)$ with $\lambda_i > 0 \ \forall i$, we have from (\ref{lem:entropy1}) that $\nabla \Phi(y)=\lnl y+e$, and so $\nabla \Phi : \inte (\cc) \rightarrow~\cj$ is bijective since $\lnl : \inte (\cc) \rightarrow \cj$ is bijective by Lemma \ref{expvslog}. Finally,
\begin{align*}
 \nabla \Phi(\expl(x-e)) & = e+\ln(\expl(x-e))\\
& =e+\ln(\expl(x)\exp(-1)) \\
& =e+\sum_i\ln(\exp(\lambda_i-1))q_i\\
 &=e+\sum_i(\lambda_i-1)q_i=e+x-e=x,
\end{align*}
where in the second equality we used Lemma \ref{lem:expe}, and so the inverse map of $\nabla \Phi$ is
\[
    \nabla \Phi : \cj \to \inte(\cc), \quad x \mapsto \expl(x-e).\qed
\]

 The Bregman divergence corresponding  to the \ref{entropy} $\Phi(x) = \tr (x \circ \lnl x)$ is given by \begin{equation}\label{eqn:Breg}
   \HH_\Phi(x,y) = \Phi(x) - \Phi(y) - \la \nabla \Phi(y),  x - y\ra
 \end{equation}

 for $ x,y \in {\rm int}(\cc)$.  
 The Bregman divergence is the difference at the point $x$ between $\Phi$ and the first-order Taylor expansion of $\Phi$ at~$y$.

In the case  where $\cc=\R^d_+$, $\HH_\Phi(x,y)$ is the (unnormalized) Kullback-Leibler divergence of $x,y$, e.g. see \cite{BOO:B04}. Moreover, when  $\cc=\mathcal{S}^d_+$  is the positive semidefinite cone (with Hermitian entries),  $\HH_\Phi(x,y)$ is the unnormalized quantum relative entropy of $x$ with respect  to $y$, e.g., see \cite{carlen}. 

\begin{lemma}\label{bregproof}
The Bregman divergence corresponding to the \ref{entropy}  $\Phi(x)  = {\rm tr}(x\circ\lnl x)$ is given by:

\begin{equation*}
\HH_\Phi(x,y) =   {\rm tr}(x \circ \lnl x-x \circ \lnl y+y-x).
\end{equation*}

\end{lemma}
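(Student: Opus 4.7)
The plan is a direct calculation: substitute the formula for $\nabla \Phi$ from Lemma~\ref{lem:entropy}(\ref{lem:entropy1}) into the definition~\eqref{eqn:Breg} of the Bregman divergence and simplify using the associativity of the inner product $\la x, y\ra = \tr(x\circ y)$ and the defining properties of the identity element~$e$.

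More concretely, I would start by writing
\[
\HH_\Phi(x,y) = \tr(x\circ\lnl x) - \tr(y\circ\lnl y) - \la \lnl y+e,\, x-y\ra,
\]
using the definition of $\Phi$ and Lemma~\ref{lem:entropy}(\ref{lem:entropy1}). Next, I would expand the inner product using bilinearity and the fact that $\la u, v\ra = \tr(u\circ v)$, obtaining four trace terms:
\[
\la \lnl y+e,\, x-y\ra = \tr(x\circ\lnl y) - \tr(y\circ\lnl y) + \tr(e\circ x) - \tr(e\circ y).
\]
Since $e\circ x = x$ and $e\circ y = y$, the last two summands collapse to $\tr(x)-\tr(y)$.

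Substituting back, the $\tr(y\circ\lnl y)$ terms cancel, leaving
\[
\HH_\Phi(x,y) = \tr(x\circ\lnl x) - \tr(x\circ\lnl y) - \tr(x) + \tr(y),
\]
which by linearity of the trace equals $\tr(x\circ\lnl x - x\circ\lnl y + y - x)$, as claimed. There is no real obstacle here; the argument is essentially a bookkeeping exercise, and the only nontrivial ingredient is the gradient formula $\nabla \Phi(y)=\lnl y + e$, which is already established in Lemma~\ref{lem:entropy}. The main thing to be careful about is invoking the associative/symmetric inner product so that $\la \lnl y, x\ra = \tr(x\circ \lnl y)$ without any ordering issues in the Jordan product.
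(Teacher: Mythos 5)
Your proof is correct and follows the same route as the paper: substitute the gradient formula $\nabla\Phi(y)=\lnl y+e$ from Lemma~\ref{lem:entropy} into the definition of the Bregman divergence, expand the inner product by bilinearity, and simplify using $e\circ x=x$. Your intermediate expansion even gets the signs right, which is a slight improvement over the (typographical) sign slips in the paper's displayed intermediate step.
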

\begin{proof} From the definition of the Bregman divergence  and the formula for $\nabla\Phi(x)$ given in Lemma \ref{lem:entropy}, we get
\begin{align*}
  \HH_\Phi(x,y)&=\Phi(x)-\Phi(y)-\la \nabla \Phi(y), x-y\ra\\
  &={\rm tr}(x\circ \lnl x)-{\rm tr}(y\circ \lnl y)-\la \lnl y+e,x-y\ra\\
  &={\rm tr}(x\circ \lnl x)-{\rm tr}(y\circ \lnl y)-{\rm tr}( \lnl y\circ x)-{\rm tr}( \lnl y\circ y)-{\rm tr}(x)-{\rm tr}(y)\\
  &={\rm tr}(x\circ \lnl x- x\circ\lnl y+y-x).
\end{align*}

\end{proof}

The following lemma  summarizes  key properties of the Bregman divergence that are relevant to the scope of this work.

\begin{lemma} \label{breglemma} The Bregman divergence satisfies the following properties:
\begin{enumerate}[(i)]
    \item \label{lem:BregProp1} $\HH_\Phi(x,y)$ is continuous 
on ${\rm int}(\cc)\times{\rm int}(\cc)$ and $\HH_\Phi(\cdot,y)$ is strictly convex for any $y\in{\rm int}(\cc).$ 
    \item \label{lem:BregProp2} For any fixed $y \in {\rm int}(\cc)$, $\HH_\Phi(\cdot,y)$ is continuously differentiable on ${\rm int}(\cc)$ and ${\nabla_x \HH_\Phi(x,y) = \lnl x - \lnl y.}$
    \item\label{lem:BregProp3}  $\HH_\Phi(x,y)\geq 0$ for any $x, y\in{\rm int}(\cc)$; furthermore, equality holds if and only if $x=y.$
    \item\label{lem:3term} For $ x,y,z \in{\rm int}(\cc),$

    the ``three-point identity''~holds:
$$\la z-y , \lnl y-\lnl x\ra=\HH_\Phi(z,x)-\HH_\Phi(z,y)-\HH_\Phi(y,x).$$
\item The Bregman projection of  $y\in {\rm int}(\cc)$ 
onto the trace-one slice of $\cc$
is given by
\begin{equation}\label{brgprojection}
    \underset{x\in \cc,\tr(x)=1}{\argmin}\HH_\Phi(x,y)={y\over \tr(y)}.
\end{equation}
\end{enumerate} 
\end{lemma}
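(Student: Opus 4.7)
Parts (i)--(iv) reduce to direct calculations that combine the closed-form expression $\HH_\Phi(x,y)=\tr(x\circ\lnl x-x\circ\lnl y+y-x)$ from Lemma \ref{bregproof} with the properties of $\Phi$ established in Lemma \ref{lem:entropy}. For (i), continuity on $\inte(\cc)\times\inte(\cc)$ is immediate since $\lnl$, $\circ$ and $\tr$ are continuous on the interior; strict convexity of $\HH_\Phi(\cdot,y)$ is inherited from $\Phi$ because the divergence differs from $\Phi(\cdot)$ by an affine term, and $\Phi$ is strictly convex by Lemma \ref{lem:entropy}(\ref{lem:entropy2}). For (ii), I would differentiate $\HH_\Phi(x,y)=\Phi(x)-\Phi(y)-\la\nabla\Phi(y),x-y\ra$ in $x$ and invoke Lemma \ref{lem:entropy}(\ref{lem:entropy1}) to get $\nabla_x\HH_\Phi(x,y)=(\lnl x+e)-(\lnl y+e)=\lnl x-\lnl y$. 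Part (iii) is the standard Bregman nonnegativity: strict convexity of $\Phi$ and its first-order characterization give $\Phi(x)>\Phi(y)+\la\nabla\Phi(y),x-y\ra$ whenever $x\neq y$, which is precisely $\HH_\Phi(x,y)>0$, with equality iff $x=y$.

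For (iv), substituting the formula of Lemma \ref{bregproof} into the right-hand side of the three-point identity, the $z\circ\lnl z$ terms in $\HH_\Phi(z,x)$ and $\HH_\Phi(z,y)$ and the linear $x,z$ contributions cancel, leaving $\tr\bigl(z\circ(\lnl y-\lnl x)\bigr)-\tr\bigl(y\circ(\lnl y-\lnl x)\bigr)=\tr\bigl((z-y)\circ(\lnl y-\lnl x)\bigr)=\la z-y,\lnl y-\lnl x\ra$, as claimed.

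Part (v) is the main step. I would form the Lagrangian $L(x,\mu)=\HH_\Phi(x,y)+\mu(\tr(x)-1)$ and, using part (ii), write the stationarity condition as $\lnl x-\lnl y+\mu e=0$, i.e.\ $\lnl x=\lnl y-\mu e$. The key algebraic move is the inversion: if $y=\sum_i\lambda_i q_i$ is a type-II spectral decomposition, then $e=\sum_i q_i$ shares this Jordan frame, so $\lnl y-\mu e=\sum_i(\ln\lambda_i-\mu)q_i$, and the L\"{o}wner exponential yields $x=\expl(\lnl y-\mu e)=\sum_i e^{-\mu}\lambda_i q_i=e^{-\mu}y$. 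The constraint $\tr(x)=1$ then forces $e^{-\mu}=1/\tr(y)$, giving $x=y/\tr(y)$. Since $\HH_\Phi(\cdot,y)$ is $C^1$ and strictly convex on $\inte(\cc)$ (by (i) and (ii)) and the candidate $y/\tr(y)\in\inte(\cc)$ satisfies the first-order Lagrange conditions on the convex feasible slice, it is the unique global minimizer.

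The main obstacle is the clean inversion of the first-order equation in (v). It works precisely because $e$ is central in the Jordan algebra and shares a Jordan frame with every element, so the L\"{o}wner exponential behaves multiplicatively under shifts by multiples of $e$. This is exactly the structural feature that recovers the familiar normalization $p\mapsto p/\tr(p)$ in the simplex and density-matrix special cases, and is what makes the symmetric-cone setting amenable to the same Bregman-projection style argument used for \ref{MWU} and \ref{MMWU}.
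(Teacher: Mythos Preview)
Your proposal is correct and follows essentially the same route as the paper. For parts (i)--(iv) the paper simply cites \cite{ART:CP10}, whereas you supply the short direct arguments; for part (v) both you and the paper derive the stationarity condition $\lnl x-\lnl y+\mu e=0$ and invert it via the multiplicativity of $\expl$ under shifts by $e$. The only difference is in handling the cone constraint: the paper first invokes \cite[Theorem 3.12]{bauschke} to guarantee the minimizer lies in $\inte(\cc)$ and then writes the full KKT system including a dual variable $z\succeq_\cc 0$ for the constraint $x\in\cc$ (subsequently shown to vanish), while you drop the cone constraint from the Lagrangian and appeal directly to convexity to certify that an interior stationary point on the affine slice is the global minimizer over all of $\cc\cap\{\tr=1\}$---which is valid since at $y/\tr(y)\in\inte(\cc)$ the normal cone to the feasible set is just $\operatorname{span}\{e\}$.
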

\begin{proof}
Properties $(i)-(iv)$ are derived  in \cite{ART:CP10}, and so we only provide a proof for part $(v)$. Since $\Phi$ is a mirror map the minimum is attained at a unique point $x^*$ which is in ${\rm int}(\cc)$, e.g. see \cite[Theorem 3.12]{bauschke}.  To show that the minimizer has the specific form we use the KKT conditions.  The Lagrangian for this problem is given~by
    \[
        \mathcal{L}(x, \lambda,z) = \HH_\Phi(x,y) + \lambda (  \tr(x)-1)-\la z, x\ra,
    \]
    where $z\succeq_\cc 0$ is the dual multiplier corresponding to the constraint $x\succeq_\cc 0$. Setting $\lambda^*, z^*$  to be a dual optimal solution, by the KKT conditions we have that 
    \[
        \lnl x^*-\lnl y +  \lambda^* e-z^*=0 \ \text{ and }\  \la z^*,x^*\ra=0.
    \] 
    As $x^*\in {\rm int}(\cc)$ it follows that $z^*=0$.  Indeed, let $x^*=\sum_i\lambda_iq_i$ be the spectral decomposition of $x^*$. Since $x^*\in {\rm int}(\cc)$ we have that $\lambda_i>0 \ \forall i$, which coupled with the fact that $\innerprod{z^*}{q_i} \geq 0 \ \forall i$ (since $q_i \in \cc$) while $\sum_i \lambda_i \innerprod{z^*}{q_i} = \innerprod{z^*}{\sum_i \lambda_i q_i} = \innerprod{z^*}{x^*} = 0$ implies that $\la z^*,q_i\ra=0$ for all $i$. Thus $\tr(z^*)=\la z^*, e\ra=\sum_i\la z^*, q_i\ra=0$, which together with the fact that $z^* \in \cc$ (and thus all eigenvalues of $z^*$ are nonnegative) shows that all eigenvalues of $z^*$ are zero. We thus conclude that $z^*$ is zero and get that 

       \[ \lnl x^*-\lnl y +  \lambda^* e=0.\]
Solving for $x^*$ then gives:
$$x^*=\expl(\lnl(y)-\lambda^*e)=\exp(-\lambda^*)\expl(\lnl(y))=\exp(-\lambda^*)y,$$
where for the second equality we use Lemma \ref{lem:expe}. 
Finally, $\lambda^*$ is uniquely determined using $\tr(x^*)=1$. Indeed,  
$$1=\tr(x^*)=\exp(-\lambda^*)\tr(y)\iff \lambda^*= \ln( \tr(y)),$$
thus giving
\[
    x^* = \frac{y}{\tr(y)}.
\]
\end{proof}

\paragraph{An explicit example: The second-order cone.} The second-order cone (SOC) is given by
\begin{equation}
\label{SOCspec}
\mathrm{SOC}_d=\{(x,s)\in \R^d\times \R:\  \|x\|_2\le s\},
\end{equation}
and it is the cone of squares of the EJA defined by the real vector space $\R^{d+1}$ equipped with the Jordan product 
$$(x, s)\circ  (x',s')=(sx'+s'x, x^\sfT x'+ss'),$$
with unit  $e=(0_d,1)$, e.g. see \cite{Alizadeh}. The corresponding spectral decomposition is
$$ (x,s)=({s+\|x\|_2})q_{+} +(s-\|x\|_2)q_{-}, $$
where 
$$q_{+}={1\over 2}\Big({x\over \|x\|_2}, 1\Big) \quad  \; q_{-}={1\over 2}\Big(-{x\over \|x\|_2}, 1\Big)$$ is a Jordan frame and  $\tr(x,s)=~2s.$ Finally, the EJA inner product is  twice the Euclidean inner product, i.e.,
\begin{equation}
\label{SOC_innerprod}
\la (x, s),  (x',s')\ra =\tr((x, s)\circ   (x',s')) =2(x^\sfT x'+ss').
\end{equation}

The trace-one slice of  ${\rm SOC}_d$  is isomorphic to the half-unit $d$-ball $\frac{1}{2}\ball^d = \{x\in \R^d: \|x\|_2 \le 1/2\}$.
Over  the interior of the SOC (i.e., vectors  $(x,s)\in {\rm SOC}_d$ with $\|x\|_2<s$) we have by \eqref{entropy}  that the corresponding entropy  is given by
$$
    \Phi(x,s) = (s + \|x\|_2)\ln(s + \|x\|_2) + (s - \|x\|_2)\ln(s - \|x\|_2).
$$
 Figure \ref{fig:entbreg}   shows  the level curves of the \ref{entropy} and the Bregman divergence ${x\mapsto  \mathcal{H}_\Phi(\cdot,y)}$ over ${\rm SOC}_2 \cap (\tr=1)$. 
 \begin{figure}[h]
    \centering
    \begin{minipage}{.49\linewidth}
      \centering
      \includegraphics[width=.95\linewidth]{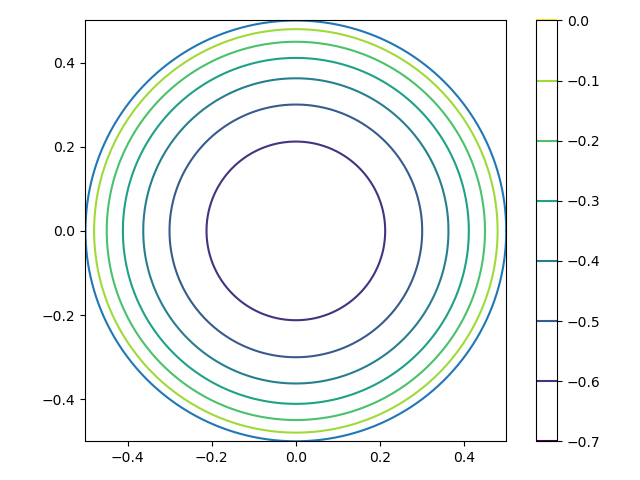}
     \ref{entropy} $\Phi(\cdot)$
    \end{minipage}
    \begin{minipage}{.49\linewidth}
      \centering
      \includegraphics[width=.95\linewidth]{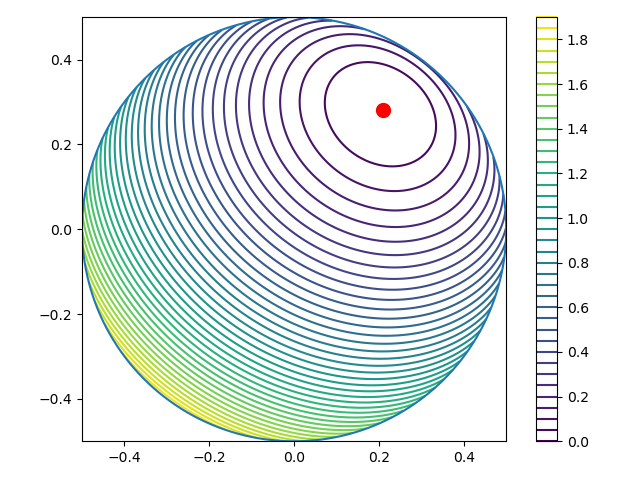}
      Bregman divergence $\HH_{\Phi}(\cdot, y)$
    \label{fig:bregman}
    \end{minipage}
    \caption{Level curves of the \ref{entropy} and the Bregman divergence with respect to  $y = (0.21, 0.28, 0.5)$ over ${\rm SOC}_2 \cap (\tr=1)$.}
    \label{fig:entbreg}
\end{figure}

 \section{Online symmetric-cone optimization}\label{sec:OSCO}
Having defined  the \ref{entropy} $\Phi$ and the corresponding Bregman divergence $\HH_\Phi$ for  symmetric cones, we  now define the following three iterative schemes for online learning over the trace-one slice of any symmetric cone: 

\begin{mdframed}[style=MyFrame]
 {\bf  Symmetric-Cone Multiplicative Weights Update:} 
\begin{equation}
\tag{SCMWU}\label{SCMWU}
p_{t+1}=\frac{ \expl(-\eta \sum_{i=1}^t m_i)} {\tr(\expl(-\eta \sum_{i=1}^t m_i))}
\quad \forall t \geq 0
\end{equation}

 {\bf Follow the Regularized Leader: } 
\begin{equation}
\tag{FTRL}\label{FTRL}
p_{t+1}= \underset{x\in \cc,\tr(x)=1 }{\argmin} \Big\{ \Phi(x)+\eta\sum_{i=1}^t\la m_i, x\ra\Big\}
\quad \forall t \geq 0
\end{equation}
  {\bf Online Mirror Descent:}
 \begin{align*}\tag{OMD}\label{OMD}
   p_1&=\underset{x\in \cc,\tr(x)=1 }{\argmin} \Phi(x)\\
 p_{t+1}&=\underset{x\in \cc,\tr(x)=1 }{\argmin} \Big\{ \eta \la m_t, x\ra+ \HH_\Phi(x,p_t) \Big\}
 \quad \forall t \geq 1 
 \end{align*}
 \end{mdframed}
 
 \medskip 
{\bf Main theorem:} Let $(\cj, \circ )$ be an EJA of rank $r$ and let~$\cc$ be its cone of squares.  We have that:
\begin{itemize}
\item[$(i)$] The  iterates generated by  \ref{SCMWU}, \ref{FTRL} and \ref{OMD} coincide.
\item[$(ii)$]   \ref{SCMWU} has vanishing regret. Specifically,  for any sequence of loss vectors   $m_t$ where   
${-e \preceq_\cc m_t\preceq_\cc e}$ and stepsize $\eta \leq 1$  we have  that $$\sum_{t=1}^{T} \la m_t,p_t-u\ra \le
 \eta T+\frac{1}{\eta}\ln r,$$
for all $ u\in\cc$ such that $\tr(u)=1$.
\end{itemize}

The proof of part $(i)$ is given in Theorem \ref{thm:main},  and that of part~$(ii)$ is given in Theorem \ref{thm:regret}.  The  proof of these three viewpoints' equivalence  is well known  both for the  case of the nonnegative orthant, e.g., see \cite{BOO:H19}, and  the PSD cone, e.g., see \cite{spectral}. Moreover, our  regret bound for the  nonnegative orthant  specializes to the well-known fact that 
the multiplicative weights update method has vanishing regret  \cite{vovk,lwar,ART:AHK12}. Finally,  our regret bound in the case of the~PSD  cone specializes to the fact that the matrix multiplicative weights update method is no-regret \cite{kale}.

\section{Equivalence of the three iterative schemes}\label{sec:equivalence}
In this section, we prove the equivalence of the three different iterative procedures \ref{SCMWU}, \ref{FTRL}, and \ref{OMD}. 
This relies on the well-known  fact that both the FTRL and OMD iterates can be decoupled into two steps: taking an unconstrained $\argmin$ followed by taking a Bregman projection 
(see, e.g., Theorem 18 and 19 in \cite{ART:McMahan} and Section 6.4.2 in \cite{orabona}). In our case, we are concerned with the set $\{x\in\cc,\tr(x)=1\}$. 
In Lemmas \ref{lemma:FTRldecoup} and \ref{lem:OMD} we give the explicit forms of the decoupled iterates in our symmetric cone setting, and use these to prove the equivalence of \ref{SCMWU}, FTRL, and OMD in Theorem \ref{thm:main}.

\begin{lemma}\label{lemma:FTRldecoup}(FTRL decoupling)
The \ref{FTRL} iterates
\[
 p_{t+1}=\underset{x\in \cc,\tr(x)=1 }{\argmin} \Big\{ \Phi(x)+\eta\sum_{i=1}^t\la m_i, x\ra  \Big\}
\]
can be decoupled as 
\begin{align}
y_{t+1}&= \underset{x\in \R^d}{\argmin} \Big\{ \Phi(x)+\eta\sum_{i=1}^t\la m_i, x\ra \Big\}=\exp(-1)\expl(-\eta \sum_{i=1}^tm_i),\label{lem:decoup_ftrl1}\\
p_{t+1}&=\underset{x\in \R^d,\tr(x)=1}{\argmin}\HH_\Phi(x,y_{t+1})
= \frac{ \expl(-\eta \sum_{i=1}^t m_i)} {\tr(\expl(-\eta \sum_{i=1}^t m_i))}
.\label{lem:decoup_ftrl2}
\end{align}

\end{lemma}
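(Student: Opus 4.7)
My plan is to establish the two identities in order, treating the first as an unconstrained stationary-point computation via Lemma~\ref{lem:entropy}, and the second as a standard decoupling identity that reduces the constrained minimization to the Bregman projection already computed in Lemma~\ref{breglemma}(v).

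For \eqref{lem:decoup_ftrl1}, the objective $\Phi(x)+\eta\sum_{i=1}^t \la m_i,x\ra$ is strictly convex on $\inte(\cc)$ by Lemma~\ref{lem:entropy}(\ref{lem:entropy2}), and the boundary-blowup property Lemma~\ref{lem:entropy}(\ref{lem:entropy3}) ensures that the unconstrained infimum (over $\cj$, with $\Phi$ extended by $+\infty$ outside $\inte(\cc)$) is attained in the interior. Setting the gradient to zero and using $\nabla\Phi(x)=\lnl x+e$ from Lemma~\ref{lem:entropy}(\ref{lem:entropy1}) yields the stationarity condition $\lnl y_{t+1}+e=-\eta\sum_{i=1}^t m_i$. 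Inverting via Lemma~\ref{lem:entropy}(\ref{lem:entropy4_actualLemma}) gives $y_{t+1}=\expl\bigl(-e-\eta\sum_{i=1}^t m_i\bigr)$, and then Lemma~\ref{lem:expe} (which factors out scalar multiples of the identity in the exponential) rewrites this as $\exp(-1)\,\expl\bigl(-\eta\sum_{i=1}^t m_i\bigr)$, as claimed.

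For \eqref{lem:decoup_ftrl2}, the key observation is that minimizing $\HH_\Phi(\cdot,y_{t+1})$ over the trace-one slice is exactly the FTRL problem up to an additive constant in $x$. Indeed, using the definition of the Bregman divergence and the stationarity condition $\nabla\Phi(y_{t+1})=-\eta\sum_{i=1}^t m_i$ derived above,
\begin{align*}
\HH_\Phi(x,y_{t+1}) &= \Phi(x)-\Phi(y_{t+1})-\la \nabla\Phi(y_{t+1}),x-y_{t+1}\ra\\
&= \Phi(x)+\eta\sum_{i=1}^t\la m_i,x\ra + C,
\end{align*}
where $C$ depends only on $y_{t+1}$. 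Hence the constrained argmin defining $p_{t+1}$ in \ref{FTRL} coincides with the Bregman projection of $y_{t+1}$ onto $\{x\in\cc:\tr(x)=1\}$, and Lemma~\ref{breglemma}(v) identifies this projection as $y_{t+1}/\tr(y_{t+1})$. Finally, substituting the formula for $y_{t+1}$ from the first part and canceling the common scalar factor $\exp(-1)$ from numerator and denominator produces the desired expression $\expl(-\eta\sum_{i=1}^t m_i)/\tr(\expl(-\eta\sum_{i=1}^t m_i))$.

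The routine parts are the algebraic manipulations; the main conceptual step is the identification of the two objectives up to an $x$-independent constant, which cleanly reduces \eqref{lem:decoup_ftrl2} to the already-established projection formula. The only subtle point worth being careful about is that although the displayed argmin in \eqref{lem:decoup_ftrl1} is written over $\R^d$, the effective domain of $\Phi$ is $\inte(\cc)$, so one should view $\Phi$ as extended to $+\infty$ on the complement; Lemma~\ref{lem:entropy}(\ref{lem:entropy2})--(\ref{lem:entropy3}) guarantees that this extension is convex and coercive, so the minimization is well-posed and the stationarity argument is justified.
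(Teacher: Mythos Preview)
Your proposal is correct and follows essentially the same approach as the paper: compute the unconstrained minimizer via the first-order condition $\nabla\Phi(y_{t+1})=-\eta\sum_i m_i$ and Lemma~\ref{lem:entropy}(\ref{lem:entropy4_actualLemma}), then identify the constrained minimizer as the Bregman projection given by Lemma~\ref{breglemma}(v). The only difference is that the paper invokes the general FTRL decoupling as a cited fact and merely computes the explicit forms, whereas you supply the one-line justification (the two objectives differ by an $x$-independent constant since $\nabla\Phi(y_{t+1})=-\eta\sum_i m_i$) that makes the argument self-contained; this is a welcome addition and not a genuinely different route.
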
 

\begin{proof}
    $x^* := (\nabla \Phi)^{-1} \left( - \eta \sum_{i=1}^t m_i\right)$ is the unique minimizer of $\Phi(x) + \eta \sum_{i=1}^t \innerprod{m_i}{x}$ since it satisfies
    \[
        \nabla \left[\Phi(x^*) + \eta \sum_{i=1}^t \innerprod{m_i}{x^*} \right]
        =
        0
    \]
     and $\Phi(x)$ is strictly convex (see Lemma \ref{lem:entropy}). Thus, from \eqref{lem:entropy4} we have that 
$$y_{t+1}=\expl(-\eta \sum_{i=1}^t m_i - e)= \exp(-1)\expl(-\eta \sum_{i=1}^t m_i ),$$
where for the second equality we use Lemma \ref{lem:expe}. Finally, by \eqref{brgprojection} it follows that
\[
    \underset{x\in \R^d,\tr(x)=1}{\argmin}\HH_\Phi(x,y_{t+1})
    =
    \frac{ \expl(-\eta \sum_{i=1}^t m_i)} {\tr(\expl(-\eta \sum_{i=1}^t m_i))}.
\]
\end{proof}

\begin{lemma}\label{lem:OMD}(OMD decoupling)
The \ref{OMD} iterates
$$p_{t+1}=\underset{x\in \cc,\tr(x)=1}{\argmin} \Big\{ \eta \la m_t, x\ra+ \HH_\Phi(x,p_t) \Big\}$$
can be decoupled as
\begin{align}
y_{t+1}&=\underset{y\in \R^d}{\argmin} \Big\{ \eta\la m_t, y\ra+ \HH_\Phi(y,p_t)\Big\}
= \expl(\lnl p_t-\eta m_t),
\label{eqn:MDdecoup1}\\
p_{t+1}&=\underset{x\in \R^d,\tr(x)=1}{\argmin}\HH_\Phi(x,y_{t+1})
= \frac{\expl(\lnl p_t-\eta m_t)}{\tr(\expl(\lnl p_t-\eta m_t))}
.\label{eqn:MDdecoup2}
\end{align}
\end{lemma}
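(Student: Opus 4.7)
The plan is to follow exactly the template used for Lemma \ref{lemma:FTRldecoup}, but now applied to the per-step OMD objective $F(y) := \eta\la m_t, y\ra + \HH_\Phi(y, p_t)$. The two steps are: (a) solve the unconstrained minimization in closed form to get \eqref{eqn:MDdecoup1}, and (b) reduce the constrained minimization to a Bregman projection to get \eqref{eqn:MDdecoup2}.

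For step (a), I would compute the gradient using Lemma \ref{breglemma}(ii): $\nabla F(y) = \eta m_t + \lnl y - \lnl p_t$. Setting this to zero gives $\lnl y = \lnl p_t - \eta m_t$, and since $\expl$ and $\lnl$ are mutual inverses (Lemma \ref{expvslog}), the unique critical point is $y_{t+1} = \expl(\lnl p_t - \eta m_t)$. Uniqueness of the minimizer follows from the strict convexity of $\HH_\Phi(\cdot, p_t)$ on $\inte(\cc)$ (Lemma \ref{breglemma}(i)), which survives the addition of the linear term $\eta\la m_t,\cdot\ra$. Although the argmin is nominally written over $\R^d = \cj$, the objective is implicitly $+\infty$ outside $\inte(\cc)$ (where $\HH_\Phi$ is defined), and the mirror-map property in Lemma \ref{lem:entropy}(iii) guarantees the objective blows up toward $\p\cc$, so no minimizer is lost by this restriction.

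For step (b), the reduction to a Bregman projection rests on the standard decoupling identity $F(x) = F(y_{t+1}) + \HH_\Phi(x, y_{t+1})$. To see it, expand $\HH_\Phi(y, p_t) = \Phi(y) - \Phi(p_t) - \la \nabla\Phi(p_t), y - p_t\ra$ inside $F$ to observe that $F(y) - \Phi(y)$ is affine in $y$; consequently $\HH_F(\cdot,\cdot) = \HH_\Phi(\cdot,\cdot)$. Since $y_{t+1}$ is the unconstrained minimizer, $\nabla F(y_{t+1}) = 0$, and applying the definition of the Bregman divergence of $F$ at $y_{t+1}$ yields
\[
F(x) - F(y_{t+1}) = \HH_F(x, y_{t+1}) = \HH_\Phi(x, y_{t+1}) \quad \forall x \in \inte(\cc).
\]
Minimizing $F$ over the trace-one slice is therefore equivalent to minimizing $\HH_\Phi(\cdot, y_{t+1})$ over the same slice. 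The Bregman projection formula \eqref{brgprojection} finishes the proof, giving $p_{t+1} = y_{t+1}/\tr(y_{t+1}) = \expl(\lnl p_t - \eta m_t)/\tr(\expl(\lnl p_t - \eta m_t))$.

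The only step that is more than a direct calculation is the decoupling identity $F(x) = F(y_{t+1}) + \HH_\Phi(x, y_{t+1})$; once that is in hand, all of the EJA-specific work has already been absorbed into the basic lemmas (inversion of $\expl,\lnl$, mirror-map properties of $\Phi$, and the projection formula). In particular, the argument is structurally identical to the FTRL case and generalizes the simplex and density-matrix proofs by replacing $\exp$/$\log$ with their L\"owner extensions $\expl$/$\lnl$.
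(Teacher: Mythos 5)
Your proposal is correct and follows essentially the same route as the paper: compute the unconstrained stationary point from $\nabla_y\HH_\Phi(y,p_t)=\lnl y-\lnl p_t$ (Lemma \ref{breglemma}\eqref{lem:BregProp2}), use strict convexity for uniqueness, and close with the Bregman projection formula \eqref{brgprojection}. The one real addition is that you actually verify the decoupling equivalence via the identity $F(x)=F(y_{t+1})+\HH_\Phi(x,y_{t+1})$, obtained by noting that $F-\Phi$ is affine (so $\HH_F=\HH_\Phi$) and that $\nabla F(y_{t+1})=0$; the paper treats this decoupling as a known external fact and only derives the closed forms of the two sub-steps, so your version is more self-contained on that point. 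A minor imprecision: Lemma \ref{lem:entropy}\eqref{lem:entropy3} gives $\|\nabla\Phi(x)\|\to\infty$ as $x\to\partial\cc$, not that the objective value itself blows up; this is harmless here since finding the interior stationary point $\expl(\lnl p_t-\eta m_t)\in\inte(\cc)$ together with strict convexity already pins down the minimizer without any boundary argument.
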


\begin{proof}
    We first note that any $y^*$ satisfying $\nabla_y \HH_\Phi (y^*, p_t) = - \eta m_t$ would be the unique minimizer of $\eta \la m_t, y\ra + \HH_\Phi(y,p_t)$ since it would satisfy
    \[
        \nabla_y \big[\eta \la m_t, y^*\ra + \HH_\Phi(y^*,p_t) \big] = 0
    \]
    and $\HH_{\Phi}(\cdot, \cdot)$ is strictly convex in its first argument (see Lemma \ref{breglemma} \eqref{lem:BregProp1}).
    Thus, from Lemma \ref{breglemma} \eqref{lem:BregProp2} we have that 
$$\eta m_t+\lnl y_{t+1}-\lnl  p_t=0$$
and so 
$$y_{t+1}=\expl(\lnl p_t-\eta m_t).$$
Finally, from \eqref{brgprojection} it follows that
$$\underset{x\in \R^d,\tr(x)=1}{\argmin}\HH_\Phi(x,y_{t+1})
=\frac{\expl(\lnl p_t-\eta m_t)}{\tr(\expl(\lnl p_t-\eta m_t))}.$$
\end{proof}

\begin{theorem}\label{thm:main}
For any  symmetric cone $\cc$ 
the iterates $\{p_t\}_{t}$ generated by \ref{SCMWU}, \ref{FTRL} and \ref{OMD}~coincide.
\end{theorem}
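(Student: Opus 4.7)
The proof naturally splits into two equalities: \ref{FTRL} = \ref{SCMWU}, and \ref{OMD} = \ref{SCMWU}. The first is essentially immediate from the work already done. Lemma \ref{lemma:FTRldecoup} shows that the FTRL iterate $p_{t+1}$ can be decoupled into an unconstrained minimization followed by a Bregman projection onto the trace-one slice, and formula \eqref{lem:decoup_ftrl2} gives exactly $\frac{\expl(-\eta \sum_{i=1}^t m_i)}{\tr(\expl(-\eta \sum_{i=1}^t m_i))}$, matching \ref{SCMWU} verbatim. So the FTRL–SCMWU part requires no further argument beyond citing Lemma \ref{lemma:FTRldecoup}.

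The OMD–SCMWU part is where the real work lies, because Lemma \ref{lem:OMD} expresses $p_{t+1}^{\mathrm{OMD}}$ in terms of the previous iterate $p_t$ rather than as a closed-form expression in the losses $m_1,\dots,m_t$. My plan is to unroll the recursion by induction on $t$. For the base case, both algorithms begin at $p_1 = \underset{x\in\cc,\tr(x)=1}{\argmin}\Phi(x) = e/r$; the SCMWU formula at $t=0$ gives $\expl(0)/\tr(\expl(0)) = e/\tr(e) = e/r$ as well, matching the OMD initialization.

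For the inductive step, assume $p_t = \frac{\expl(-\eta\sum_{i=1}^{t-1}m_i)}{\tr(\expl(-\eta\sum_{i=1}^{t-1}m_i))}$. Taking $\lnl$ of a positive scalar multiple of an EJA element shifts the spectral decomposition by a multiple of $e$, so
\[
\lnl p_t \;=\; -\eta\sum_{i=1}^{t-1}m_i \;-\; \ln\!\Big(\tr\big(\expl(-\eta\textstyle\sum_{i=1}^{t-1}m_i)\big)\Big)\, e,
\]
using that $\lnl$ and $\expl$ are inverse on $\inte(\cc)$. Substituting into \eqref{eqn:MDdecoup2} and applying Lemma \ref{lem:expe} (i.e., $\expl(x+\alpha e) = \exp(\alpha)\expl(x)$), the scalar factor $\exp(-\ln(\tr(\cdots))) = 1/\tr(\cdots)$ appears in both numerator and denominator and cancels, leaving precisely $\frac{\expl(-\eta\sum_{i=1}^{t}m_i)}{\tr(\expl(-\eta\sum_{i=1}^{t}m_i))}$, which is $p_{t+1}^{\mathrm{SCMWU}}$.

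The only nontrivial ingredient is the pair of identities $\lnl(\alpha x) = \lnl x + \ln(\alpha)\,e$ for $\alpha>0$ and $\expl(x+\alpha e) = \exp(\alpha)\expl(x)$; these follow from the spectral definition of the Löwner extensions together with the fact that adding a scalar multiple of $e$ shifts all eigenvalues uniformly while preserving any Jordan frame (so $\expl$ and $\lnl$ act diagonally in the spectral decomposition). This is where one must be slightly careful, since general functional-calculus identities in EJAs require a common Jordan frame — but here the frame of $x$ serves simultaneously for $x+\alpha e$ and $\alpha x$, so no obstruction arises. This is the step that could trip up a reader unfamiliar with EJAs, and it is really the only substantive point beyond bookkeeping.
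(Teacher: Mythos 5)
Your proof is correct and follows essentially the same route as the paper's: cite Lemma \ref{lemma:FTRldecoup} for the FTRL--SCMWU equality, then argue OMD--SCMWU by induction, using Lemma \ref{lem:expe} to cancel the scalar multiple of $e$ arising from the normalization. The paper packages the induction hypothesis as $p_t = \expl(c_t e - \eta\sum_{i=1}^{t-1}m_i)$ rather than first taking $\lnl p_t$, but this is the same computation; you are in fact a bit more explicit about the base case $p_1 = e/r$ and about the L\"owner-calculus identity $\lnl(\alpha x) = \lnl x + (\ln\alpha)e$, both of which the paper leaves implicit.
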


\begin{proof}
    We have already shown in Lemma \ref{lemma:FTRldecoup} that the \ref{SCMWU} and FTRL iterates coincide. To prove that the \ref{SCMWU} and OMD iterates coincide, suppose by induction that they give the same iterate $p_t$ for some  $t \geq 1$. 
     Then by Lemma \ref{lem:OMD} OMD's next iterate is
    \begin{equation} \label{xcsdvf}
        p_{t+1} 
        = 
        \frac{\expl(\lnl p_t-\eta m_t)}{\tr(\expl(\lnl p_t-\eta m_t))}.
    \end{equation} 
    From the induction hypothesis  we have that
\begin{equation}\label{eqn:_viewpoints_p_t} 
p_t=\expl(c_t e-\eta \sum_{i=1}^{t-1} m_i)
\end{equation}
where $c_t=-\ln \tr(-\eta \expl(\sum_{i=1}^{t-1}m_i)).$ Substituting \eqref{eqn:_viewpoints_p_t} in  \eqref{xcsdvf}   gives us that
\[
    p_{t+1}
     =  { \expl(c_t e - \eta \sum_{i=1}^t m_i) \over \tr(\expl(c_t e - \eta \sum_{i=1}^t m_i))}=    \frac{ \expl(-\eta \sum_{i=1}^t m_i)} {\tr(\expl(-\eta \sum_{i=1}^t m_i))},
\]
where for the last equality we use Lemma \ref{lem:expe}. 
\end{proof}

\paragraph{Remark on the per-iteration complexity of \ref{SCMWU}.} We note that each \ref{SCMWU} update amounts to performing a spectral decomposition of the sum of loss vectors (and then exponentiating the eigenvalues). \ref{SCMWU} is thus comparable in per-iteration complexity to Online Gradient Descent (OGD) \cite{INP:Z03}, which is a first-order algorithm for online convex optimization given by the update
\begin{equation}
\label{OGD} \tag{OGD}
    p_{t+1} = \argmin_{p \in \U} \, \norm{p - (p_t - \eta m_t)}_2.
\end{equation} 

The primary computational challenge of \ref{OGD} lies in performing Euclidean projection onto the feasible region. The problem of projecting onto the trace-one slice of a primitive symmetric cone is well studied (see, e.g.,   \cite{duchi2008efficient,chen} for the simplex and \cite{yang2020revisiting}  for density matrices), and the algorithms for this problem have comparable per-iteration complexity to SCMWU.
However, whenever the symmetric cone is not primitive and has a direct-sum structure, 
a convex program needs to be solved in each iteration of \ref{OGD}. On the other hand, \ref{SCMWU}'s per-iteration complexity in this setting increases additively when we add components to the symmetric cone's direct-sum structure. This is due to the fact that performing a spectral decomposition on an EJA element with a given direct-sum structure is equivalent to performing a spectral decomposition on each of the direct-sum components.

\section{\ref{SCMWU} has vanishing regret}\label{sec:regret}

In this section  we present a proof of \ref{SCMWU}'s regret bound which relies  on its equivalence with  \ref{OMD}. An alternative, potential-based proof is given in Appendix \ref{appendix:_altreg}.

\begin{theorem}\label{thm:regret} 
Let $(\cj,\circ)$ be an EJA of rank $r$,  $\cc$ be its cone of squares,  and $\la x,y\ra=\tr(x\circ y)$ be the EJA inner product. For any sequence of loss vectors     
${-e \preceq_\cc m_t\preceq_\cc e},$ the    \ref{SCMWU}  iterates  $\{p_t\}_t$ with stepsize ${\eta\leq 1}$  satisfy
\[
    \sum_{t=1}^{T} \la m_t,p_t-u\ra \le
    \eta \sum_{t=1}^T \innerprod{m_t^2}{p_t} +\frac{1}{\eta}\ln r
\]
for all $ u\in\cc$ such that $\tr(u)=1$. 
\end{theorem}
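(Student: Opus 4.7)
The plan is to exploit the \ref{SCMWU}$\equiv$\ref{OMD} equivalence from Theorem \ref{thm:main} and run the standard mirror-descent analysis on the trace-one slice of $\cc$, with two cone-specific ingredients: a generalized Pythagorean identity for the Bregman projection, and a one-step quadratic bound $\HH_\Phi(p_t,y_{t+1})\le \eta^2\la m_t^2,p_t\ra$. Concretely, using Lemma \ref{lem:OMD} we have $\lnl y_{t+1}=\lnl p_t-\eta m_t$, so $\eta m_t=\lnl p_t-\lnl y_{t+1}$. Applying the three-point identity (Lemma \ref{breglemma}(\ref{lem:3term})) with $z=u$, $y=p_t$, $x=y_{t+1}$ yields
\[
\eta\la m_t,p_t-u\ra=\HH_\Phi(u,p_t)-\HH_\Phi(u,y_{t+1})+\HH_\Phi(p_t,y_{t+1}).
\]
To make the first two terms telescope I would replace $\HH_\Phi(u,y_{t+1})$ by $\HH_\Phi(u,p_{t+1})$: since $p_{t+1}=y_{t+1}/\tr(y_{t+1})$ by Lemma \ref{breglemma}(v), we have $\lnl p_{t+1}-\lnl y_{t+1}=-\ln(\tr y_{t+1})\,e$; the three-point identity at the triple $(u,p_{t+1},y_{t+1})$ then gives the Pythagorean equality $\HH_\Phi(u,y_{t+1})=\HH_\Phi(u,p_{t+1})+\HH_\Phi(p_{t+1},y_{t+1})$ because $\la u-p_{t+1},\,-\ln(\tr y_{t+1})e\ra=0$ (both $u$ and $p_{t+1}$ have trace one). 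Dropping the nonnegative term and telescoping over $t$ gives
\[
\eta\sum_{t=1}^T\la m_t,p_t-u\ra \le \HH_\Phi(u,p_1)+\sum_{t=1}^T\HH_\Phi(p_t,y_{t+1}).
\]

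For the initialization, $p_1=\argmin_{x\in\cc,\,\tr(x)=1}\Phi(x)$ equals $e/r$ by a KKT argument (strict convexity of $\sum_i \lambda_i\ln\lambda_i$ on the simplex of eigenvalues forces all $\lambda_i$ equal to $1/r$ in any Jordan frame), so using Lemma \ref{bregproof} the divergence simplifies to $\HH_\Phi(u,p_1)=\tr(u\circ\lnl u)+\ln r$. Since the eigenvalues of $u$ are nonnegative and sum to $1$, the Shannon-type inequality $\sum_i\lambda_i\ln\lambda_i\le 0$ yields $\HH_\Phi(u,p_1)\le \ln r$.

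The main obstacle is the one-step bound $\HH_\Phi(p_t,y_{t+1})\le \eta^2\la m_t^2,p_t\ra$. A direct expansion via Lemma \ref{bregproof} together with $\lnl y_{t+1}=\lnl p_t-\eta m_t$ simplifies this to $\HH_\Phi(p_t,y_{t+1})=\eta\la m_t,p_t\ra+\tr(y_{t+1})-1$, so it suffices to prove
\[
\tr(\expl(\lnl p_t-\eta m_t))\le 1-\eta\la m_t,p_t\ra+\eta^2\la m_t^2,p_t\ra.
\]
I would attack this via two ingredients: (a) a symmetric-cone Golden--Thompson-type inequality $\tr(\expl(a+b))\le \la \expl(a),\expl(b)\ra$ applied with $a=\lnl p_t$ and $b=-\eta m_t$, reducing the left-hand side to $\la p_t,\expl(-\eta m_t)\ra$; and (b) the spectral comparison $\expl(-\eta m_t)\preceq_\cc e-\eta m_t+\eta^2 m_t^2$, obtained by lifting the scalar inequality $e^{-x}\le 1-x+x^2$ (valid on $|x|\le 1$) through the spectral decomposition of $m_t$, using $-e\preceq_\cc m_t\preceq_\cc e$ and $\eta\le 1$. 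Since $p_t\in\cc$, pairing it against the cone inequality preserves the ordering and delivers the desired bound. Assembling everything and dividing by $\eta$ gives the stated regret bound.
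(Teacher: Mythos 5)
Your proposal is correct and follows essentially the same route as the paper: exploit the OMD decoupling to get $\eta m_t = \lnl p_t - \lnl y_{t+1}$, expand the per-step term via the three-point identity, bound $\HH_\Phi(p_t,y_{t+1})$ using the symmetric-cone Golden--Thompson inequality together with the scalar bound $e^{-s}\le 1-s+s^2$ lifted through the L\"owner extension and paired against $p_t\in\cc$ via self-duality, and finally telescope the Bregman divergences with $\HH_\Phi(u,e/r)\le\ln r$. The only (cosmetic) deviation is that you telescope $\HH_\Phi(u,p_t)-\HH_\Phi(u,p_{t+1})$ by invoking the generalized Pythagorean \emph{equality} at $(u,p_{t+1},y_{t+1})$, whereas the paper telescopes $\HH_\Phi(u,y_t)-\HH_\Phi(u,y_{t+1})$ by using the Pythagorean \emph{inequality} $\HH_\Phi(u,p_t)\le\HH_\Phi(u,y_t)$ from the projection optimality condition; since $y_1=p_1=e/r$, both arrive at the same constant.
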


Before we provide a proof we  make some remarks on Theorem \ref{thm:regret}.   Since $-e \preceq_{\cc} m_t \preceq_{\cc} e$ it follows that $0 \preceq_{\cc} m_t^2 \preceq_{\cc}  e$, and so $\innerprod{ m_t^2 }{ p_t } \leq \innerprod{ e }{ p_t } = 1$. Combining this  with  Theorem \ref{thm:regret} we get that
	\[
		  \sum_{t=1}^{T} \la m_t,p_t-u\ra 
            \leq
		\eta T
		+ \frac{1}{\eta} \ln r.
	\]

Moreover, using the notion of eigenvalues in an EJA and the corresponding variational characterization of $\lambda_{\min}$ (e.g., see Theorem \ref{thm:eig_ineq}) the regret bound becomes:
 $$\sum_{t=1}^{T} \la m_t, p_t\ra -\lam_{\min}\left(\sum_{t=1}^{T}  m_t \right)\le  \eta T+\frac{1}{\eta}\ln r. $$
The stepsize $\eta$ that minimizes the  bound $\eta T+\frac{1}{\eta}\ln r$ is $\sqrt{\ln r/T}$, which gives regret 
\begin{equation}
\label{regbound_opt}
    2\sqrt{T \ln r}.
\end{equation}
Finally,  if there is no a priori knowledge of the time horizon $T$,  a no-regret algorithm can be obtained from \ref{SCMWU} using the well-known ``doubling trick'', e.g., see \cite{shwartz}. Following this approach, time is broken up into epochs of exponentially increasing length $2^i$, where over the $i$-th epoch \ref{SCMWU} is run with optimized stepsize $\eta_i = \sqrt{2^{-i} \ln r}$. At time $T$, the regret of this algorithm is upper bounded by 
\begin{equation}
\label{doub_bound}
\frac{2\sqrt{2}}{\sqrt{2}-1} \sqrt{T \ln r}.
\end{equation}

In order to streamline the presentation of the regret bound proof, we  begin with  a  technical lemma.
\begin{lemma}\label{lem:regret}Let $(\cj,\circ)$ be an EJA and $\cc$ its cone of squares. For any fixed  $\eta>0$ and any sequence of loss vectors      
${-e} \preceq_\cc m_t\preceq_\cc e $, the   decoupled \ref{OMD} iterates   
\begin{align*}
y_{t+1}&=\underset{y\in \R^d}{\argmin} \ \eta\la m_t, y\ra+ \HH_\Phi(y,p_t)\\
p_{t+1}&=\underset{x\in \R^d,\tr(x)=1}{\argmin}\HH_\Phi(x,y_{t+1}),
\end{align*}
initialized with $y_1=p_1=e/r$, satisfy  for  all $t\ge 1$ 
$$ \la p_t - u, \eta m_t\ra \le   \HH_\Phi(u, y_t)- \HH_\Phi(u, y_{t+1}) +\eta^2\tr(p_t\circ m_t^2).$$
\end{lemma}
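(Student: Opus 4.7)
The plan is to follow the standard Online Mirror Descent regret template, adapted to the EJA setting through the identities established for the symmetric-cone Bregman divergence. From the decoupled OMD update in Lemma \ref{lem:OMD} one has $\lnl y_{t+1} = \lnl p_t - \eta m_t$, so $\eta m_t = \lnl p_t - \lnl y_{t+1}$. Applying the three-point identity of Lemma \ref{breglemma}(\ref{lem:3term}) with $(z,y,x) = (u, p_t, y_{t+1})$ yields
\begin{equation*}
\langle p_t - u,\, \eta m_t \rangle = \HH_\Phi(u,p_t) - \HH_\Phi(u,y_{t+1}) + \HH_\Phi(p_t,y_{t+1}).
\end{equation*}

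Next, I convert $\HH_\Phi(u,p_t)$ into $\HH_\Phi(u,y_t)$ through a Pythagorean-type identity that exploits the closed-form projection $p_t = y_t/\tr(y_t)$ given by \eqref{brgprojection}. Because the L\"owner logarithm satisfies $\lnl(cx) = \lnl x + (\ln c)\,e$ for any $c>0$, one obtains $\lnl p_t - \lnl y_t = -\ln(\tr(y_t))\, e$. Invoking the three-point identity a second time with $(z,y,x) = (u, p_t, y_t)$ makes the left-hand side equal to $-\ln(\tr(y_t)) \langle u-p_t, e\rangle$, which vanishes since $\tr(u)=\tr(p_t)=1$. This leaves
\begin{equation*}
\HH_\Phi(u,y_t) = \HH_\Phi(u,p_t) + \HH_\Phi(p_t,y_t) \geq \HH_\Phi(u,p_t),
\end{equation*}
by nonnegativity of the Bregman divergence (Lemma \ref{breglemma}(\ref{lem:BregProp3})).

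What remains is to bound the stability term $\HH_\Phi(p_t,y_{t+1})$ by $\eta^2 \tr(p_t\circ m_t^2)$. Substituting $\lnl y_{t+1} = \lnl p_t - \eta m_t$ into the formula of Lemma \ref{bregproof} and using $\tr(p_t)=1$ gives
\begin{equation*}
\HH_\Phi(p_t,y_{t+1}) = \eta\,\tr(p_t\circ m_t) + \tr(y_{t+1}) - 1,
\end{equation*}
so it suffices to prove $\tr(y_{t+1}) \leq 1 - \eta\,\tr(p_t\circ m_t) + \eta^2\,\tr(p_t\circ m_t^2)$. Under the hypotheses $-e \preceq_\cc m_t \preceq_\cc e$ and $\eta \leq 1$, the eigenvalues of $\eta m_t$ lie in $[-1,1]$, so the elementary scalar inequality $\exp(-\lambda) \le 1 - \lambda + \lambda^2$ promoted spectrally via the L\"owner calculus gives $\expl(-\eta m_t) \preceq_\cc e - \eta m_t + \eta^2 m_t^2$.

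The main obstacle is the remaining link: an EJA Golden--Thompson-type inequality of the form $\tr(\expl(\lnl p_t - \eta m_t)) \leq \tr(p_t \circ \expl(-\eta m_t))$, which is needed because the Jordan frames of $p_t$ and $m_t$ are generically not simultaneously diagonalizable. Once this is available, the bound is concluded by $\tr(p_t\circ\cdot)$-monotonicity on $\cc$, which follows from self-duality of the symmetric cone and $p_t \succeq_\cc 0$. Combining this with the two identities from Steps 1 and 2 then yields the claimed inequality.
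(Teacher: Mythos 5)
Your proposal is correct and follows the same overall skeleton as the paper's proof: three-point identity to decompose $\langle p_t - u, \eta m_t\rangle$, a Pythagorean step to swap $p_t$ for $y_t$ in the divergence term, an exact expansion of the stability term $\HH_\Phi(p_t,y_{t+1})$, and the generalized Golden--Thompson inequality for EJAs (which the paper cites as a known result, so it is not a gap) combined with the spectral promotion of $\exp(-s)\le 1-s+s^2$. The one genuine difference is your Pythagorean step: the paper invokes the first-order optimality condition of the Bregman projection to obtain $\langle \lnl p_t - \lnl y_t,\, p_t - u\rangle \le 0$ and then applies the three-point identity, whereas you use the closed-form projection $p_t = y_t/\tr(y_t)$ to observe that $\lnl p_t - \lnl y_t = -\ln(\tr(y_t))\,e$ is a multiple of the identity, which makes the three-point cross term vanish identically on the trace-one slice; this yields the exact Pythagorean identity $\HH_\Phi(u,y_t) = \HH_\Phi(u,p_t) + \HH_\Phi(p_t,y_t)$ rather than merely the inequality, a clean simplification that avoids appealing to optimality conditions. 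One small caveat: the scalar bound $\exp(-s)\le 1-s+s^2$ holds only for $s \ge -1$ (more precisely $s\ge -3/e$), so the step where you pass to $\expl(-\eta m_t)\preceq_\cc e - \eta m_t + \eta^2 m_t^2$ does require $\eta\le 1$; the lemma as stated says only $\eta>0$, so you are right to flag the constraint, which the paper also implicitly uses.
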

{\em Proof.}
We upper bound each term $ \la p_t - u, \eta m_t\ra $ using the following sequence of steps that are justified~below:
\begin{align*} 
\la p_t-u, \eta m_t\ra &\overset{S1}{=}\la p_t-u,  \lnl p_t-\lnl y_{t+1}\ra\\
&\overset{S2}{=}\HH_\Phi(u,p_t) - \HH_\Phi(u,y_{t+1}) + \HH_\Phi(p_t,y_{t+1}) \\
&\overset{S3}{\le}\HH_\Phi(u,y_t) - \HH_\Phi(u,y_{t+1}) + \HH_\Phi(p_t,y_{t+1}) \\
&\overset{S4}{=}\HH_\Phi(u,y_t) - \HH_\Phi(u,y_{t+1}) + \eta \tr(p_t \circ  m_t)\\
&\quad\quad-\tr(p_t)+\tr(y_{t+1}) \\
&\overset{S5}{\le} \HH_\Phi(u,y_t) - \HH_\Phi(u,y_{t+1}) + \eta \tr(p_t \circ  m_t)\\
&\quad\quad-\tr(p_t)+\tr(p_t \circ ( e-\eta m_t+\eta^2m_t^2)
) \\
&= \HH_\Phi(u,y_t) - \HH_\Phi(u,y_{t+1}) + \eta^2\tr(p_t\circ m_t^2).
\end{align*}

\noindent {\bf  S1:} The optimality condition for the first step in decoupled OMD \eqref{eqn:MDdecoup1} gives
$$
\eta m_t+\lnl y_{t+1}-\lnl p_t=0
$$
(see the proof of Lemma \ref{lem:OMD}). Thus, we get that 
$$\la p_t-u,\eta m_t\ra=\la p_t-u,  \lnl p_t-\lnl y_{t+1}\ra.$$

\medskip 

\noindent {\bf  S2:} Using   Lemma \ref{breglemma} \eqref{lem:3term} we have  that 
\[
\la p_t-u,  \lnl p_t-\lnl y_{t+1}\ra =\HH_\Phi(u,p_t)+\HH_\Phi(p_t,y_{t+1})-\HH_\Phi(u,y_{t+1}).
\]

\medskip

\noindent {\bf S3:} We show that 
\begin{align*}
\HH_\Phi(u,p_t)\le \HH_\Phi(u,y_t), \text{ for all } u\in \cc \text{ with } \tr(u)=1.
\end{align*}
Indeed,
$$p_{t}=\underset{x\in \R^d, \tr(x)=1}{\argmin}\HH_\Phi(x,y_{t}),$$
so using the fact that $\nabla_{x} \HH_\Phi (x, y_{t}) = \lnl x - \lnl y_{t}$ (see Lemma \ref{breglemma} \eqref{lem:BregProp2}) we have 
$$\la \lnl p_{t}-\lnl y_{t}, p_{t}-u\ra\le0, \   \forall u\in \cc \text{ with }  \tr(u)=1.$$
Then, by Lemma  \ref{breglemma} \eqref{lem:3term}  we~have 
$$ \la \lnl p_{t}-\lnl y_{t}, p_{t}-u\ra = \HH_\Phi(u,p_{t})+\HH_\Phi(p_{t},y_{t})-\HH_\Phi(u,y_{t})$$
which gives $\ \forall u\in \cc \text{ with }  \tr(u)=1,$
$$\HH_\Phi(u,p_{t})+\HH_\Phi(p_{t},y_{t})-\HH_\Phi(u,y_{t})\le 0.$$
Finally, as  the Bregman  divergence is nonnegative we get
$$\HH_\Phi(u,p_{t})\le \HH_\Phi(u,y_{t}), \ \forall u\in \cc \text{ with }  \tr(u)=1.$$

\noindent {\bf S4:} Next, as $ \HH_\Phi(u,y_t)-\HH_\Phi(u,y_{t+1})$ will telescope in the sum,  it remains to show that $\HH_\Phi(p_t,y_{t+1})$ is bounded. As a first step we expand $\HH_\Phi(p_t,y_{t+1})$ as~follows:
\begin{align*}
\HH_\Phi(p_t,y_{t+1}) &=\tr(p_t\circ \lnl p_t)-\tr(p_t\circ \lnl y_{t+1})+\tr(y_{t+1}) - \tr(p_t)\\
&=\tr(p_t\circ (\lnl p_t-\lnl y_{t+1}))+\tr(y_{t+1})-\tr(p_t)\\
&=\eta \tr(p_t \circ  m_t)+\tr(y_{t+1})-\tr(p_t),
\end{align*}
where the last equality follows from the  OMD decoupling \eqref{eqn:MDdecoup1}.

\medskip

\noindent {\bf S5:} We proceed to upper bound $ \tr(y_{t+1})$, i.e., 
$$\tr(y_{t+1})\le \tr(p_t \circ ( e-\eta m_t+\eta^2m_t^2)).$$
First, using \eqref{eqn:MDdecoup1} we get 
$$
\bal
\tr(y_{t+1})&=\tr(\expl(\lnl p_t-\eta m_t))\\
&\le \tr(\expl(\lnl p_t)\circ \expl(-\eta m_t))\\&=\tr(p_t\circ \expl(-\eta m_t)),
\eal
$$
where the inequality  follows  by  the generalized Golden-Thompson inequality  \cite{ART:TWK21}.
Specifically, 
    let $(\cj,\circ)$ be an EJA. Then, for any $x,y\in\cj,$
    $${\rm tr}(\expl(x+y))\le {\rm tr}(\expl(x)\circ \expl(y)),$$
and equality holds if and only if $x$ and $y$ \emph{operator commute}, i.e., $x$ and $y$ share a common Jordan frame.

Finally, we bound $\exp(-\eta m_t)$ in the cone order. 
 By assumption we have  $\lam_i(\eta m_t)\ge -1$. 
 Moreover, from Lemma~\ref{bound:_exp_real_quad} we have that
 $$\exp(-s)\le 1-s+s^2 \quad   \text{ for }  s\ge -1,$$
which together with Lemma \ref{lem:cone_ineq} implies that 
$$\expl(-\eta m_t)\preceq_\cc e-\eta m_t+\eta^2m_t^2.$$ 
Finally, as the cones of squares is self-dual, for any $p_t\in \cc$ we get 
$$\tr(p_t\circ \expl(-\eta m_t))\le \tr(p_t \circ ( e-\eta m_t+\eta^2m_t^2)).\qed$$

We are finally in a position to bound the regret of \ref{SCMWU}:

 \noindent {\bf Proof of Theorem \ref{thm:regret}.} 
In Lemma \ref{lem:regret}, we established that, where $y_t$ are the intermediate \ref{OMD}-decoupled iterates, 
$$ \la m_t, p_t-u\ra \le \frac{1}{ \eta}(\HH_\Phi(u,y_t)-\HH_\Phi(u,y_{t+1})) +\eta\innerprod{m_t^2}{p_t}.$$
Our regret  bound then follows via
\[
\sum_{t=1}^T\la m_t, p_t-u\ra 
\le \frac{1}{ \eta}(\HH_\Phi(u,y_1)-\HH_\Phi(u,y_{T+1}))+\eta\sum_{t=1}^T\innerprod{m_t^2}{p_t}
\le  \frac{1}{\eta}\ln r + \eta\sum_{t=1}^T\innerprod{m_t^2}{p_t},
\]
where the last step comes because $\HH_\Phi(u,y_{T+1}) \geq 0$ and $\HH_\Phi(u,y_1) \leq \ln r$ by Lemma \ref{lem:_entropy_central_bound} since $y_1 = \frac{e}{r}$.\qed

\section{Online optimization  over SOC vs  online optimization over the ball}
\label{sec:_SCMWU-ball}

 Online optimization over the simplex and the set of density matrices is well understood. However, Theorem \ref{thm:regret} expands the scope to include new cases involving the second-order cone and direct sums of primitive cones. In this section, we specifically investigate the relationship between online optimization over the trace-one slice of the second-order cone and online optimization over the unit ball  $\ball^d=\{x \in \R^d : \ \norm{x}_2 \leq 1\}$. 

For this, we introduce SCMWU-ball, an  iterative scheme for online optimization  over the unit ball:
\begin{equation}
\label{SCMWU-ball}\tag{SCMWU-ball}
	b_{t+1} =2 \cdot  \nexp(- \eta \sum_{i=1}^t m_i),
\end{equation}
where the normalized exponential map for a vector $x \in \R^d$  is defined by
\begin{equation*}
\begin{split}
	\nexp: \R^d &\to \itr(\hb), \\
	 x \;  &\mapsto 
			\frac{
			\exp(\norm{x}_2)
			}{
			\exp( \norm{x}_2) + \exp(- \norm{x}_2)
			}
			\frac{x}{2 \norm{x}_2}
			+
			\frac{
			\exp(-\norm{x}_2)
			}{
			\exp( \norm{x}_2) + \exp(- \norm{x}_2)
			}
			\left(-\frac{x}{ 2\norm{x}_2} \right).
\end{split}
\end{equation*}
 
 Our main result in this section  is the following: 

 \begin{theorem}
\label{thm:regret-ball} 
 For any sequence of loss vectors  $m_t \in \R^d$ 
satisfying $\norm{m_t}_2 \leq 1$,  the iterates
$\{b_t\}_t$ of \ref{SCMWU-ball} with stepsize ${\eta \leq 1}$  satisfy
\[
    \sum_{t=1}^{T} {m_t}^\top(b_t-u) \le
    { \eta}  \sum_{t=1}^T \|m_t\|_2^2 +\frac{1}{\eta}\ln 2.
\]
\end{theorem}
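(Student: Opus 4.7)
The plan is to reduce Theorem \ref{thm:regret-ball} to Theorem \ref{thm:regret} applied with $\cc = \SOC_d$ (rank $r=2$), via the standard identification of the trace-one slice of $\SOC_d$ with the half-unit ball. Given ball loss vectors $m_t \in \R^d$ with $\|m_t\|_2 \leq 1$ and a comparator $u \in \ball^d$, I embed them into $\SOC_d$ by
\[
\tilde{m}_t := (m_t, 0), \qquad \tilde{u} := (u/2, 1/2).
\]
The spectral decomposition of $\tilde{m}_t$ has eigenvalues $\pm\|m_t\|_2$, so $\|m_t\|_2 \leq 1$ is exactly the cone ordering $-e \preceq_\cc \tilde{m}_t \preceq_\cc e$ required by Theorem \ref{thm:regret}. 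Moreover $\tr(\tilde{u}) = 1$, and from \eqref{SOC_innerprod} one reads $\la \tilde{m}_t, \tilde{u}\ra = 2(m_t^\sfT u/2 + 0) = m_t^\sfT u$.

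\textbf{Iterate correspondence.} Let $\{p_t\}_t$ denote the \ref{SCMWU} iterates over $\SOC_d$ driven by $\{\tilde{m}_t\}_t$, initialized at $p_1 = e/2 = (0_d, 1/2)$. Since $\tr(p_t) = 1$, the update forces $p_t = (v_t, 1/2)$ for some $v_t$, and I claim $b_t = 2 v_t$, where $b_t$ is the \ref{SCMWU-ball} iterate. Both equal $0$ at $t = 1$. For $t \geq 1$, set $x = -\eta \sum_{i=1}^t m_i$; the SOC element $-\eta \sum_{i=1}^t \tilde{m}_i = (x,0)$ has spectral decomposition $\|x\|_2 q_+ + (-\|x\|_2) q_-$ with $q_\pm = \tfrac{1}{2}(\pm x/\|x\|_2,1)$. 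Applying $\expl$ and normalizing by $\tr(\expl(\cdot)) = \exp(\|x\|_2) + \exp(-\|x\|_2)$, the vector part of $p_{t+1}$ matches the formula for $\nexp(x)$ verbatim, yielding $2 v_{t+1} = 2\,\nexp(x) = b_{t+1}$. In particular, $\la \tilde{m}_t, p_t\ra = 2 m_t^\sfT v_t = m_t^\sfT b_t$.

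\textbf{Plugging in.} For the quadratic term in Theorem \ref{thm:regret}, the SOC Jordan product $(x,s)\circ(x',s') = (sx' + s'x, x^\sfT x' + ss')$ gives $\tilde{m}_t^2 = (m_t,0)\circ(m_t,0) = (0,\|m_t\|_2^2)$, hence $\la \tilde{m}_t^2, p_t\ra = 2 \cdot \|m_t\|_2^2 \cdot \tfrac{1}{2} = \|m_t\|_2^2$. Substituting everything into Theorem \ref{thm:regret} with $r = 2$ yields
\[
\sum_{t=1}^T m_t^\sfT (b_t - u) \;=\; \sum_{t=1}^T \la \tilde{m}_t, p_t - \tilde{u}\ra \;\leq\; \eta \sum_{t=1}^T \|m_t\|_2^2 + \tfrac{1}{\eta}\ln 2,
\]
which is exactly the stated bound. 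The main obstacle is not conceptual but bookkeeping: carefully tracking the factors of $2$ arising from (a) the EJA inner product on $\SOC_d$ being twice the Euclidean one, and (b) the trace-one slice corresponding to the \emph{half}-unit ball rather than the unit ball; once the embedding and its scalings are lined up, Theorem \ref{thm:regret} does all the work.
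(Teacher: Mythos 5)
Your proof is correct and follows essentially the same route as the paper: embed the ball problem into $\SOC_d$ via $\tilde{m}_t = (m_t,0)$ and $\tilde{u} = (u/2, 1/2)$, verify the iterate correspondence $b_t = 2v_t$ through the spectral decomposition of $(x,0)$ and the definition of $\nexp$, compute $\tilde{m}_t^2 = (0_d, \norm{m_t}_2^2)$ and $\la \tilde{m}_t^2, p_t\ra = \norm{m_t}_2^2$, and apply Theorem \ref{thm:regret} with rank $r=2$ while tracking the factor of two coming from the SOC inner product. All the bookkeeping matches the paper's argument.
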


\begin{proof} 
Consider a sequence of loss vectors $m_t\in \R^d$ with $\|m_t\|_2\le 1$. Note that 
$$\|m_t\|_2\le 1 \iff -e\preceq_{\rm SOC}   (m_t,0)\preceq_{\rm SOC} e.$$
We apply \ref{SCMWU} for learning over the trace-one slice of the second-order cone  using the sequence of loss vectors $(m_t,0).$ The \ref{SCMWU} iterates are given by 
\begin{equation}\label{scmwusoc}
    p_{t+1}=\frac{ \expl(-\eta \sum_{i=1}^t (m_i,0))} {\tr(\expl(-\eta \sum_{i=1}^t (m_i,0)))}=\frac{ \expl (-\eta \sum_{i=1}^tm_i,0)} {\tr(\expl( -\eta \sum_{i=1}^t m_i,0)))}. 
\end{equation}
Next, we use that 
\begin{equation}\label{xcsvdsfv}
\frac{\expl((x, s))}{\tr(\expl((x, s)))}=\left(\nexp(x), {1\over 2}\right).
\end{equation} 
	Indeed, the two eigenvalues of $(x, s)$ are $s\pm  \norm{x}_2$ with   corresponding  eigenvectors
	\[
		q_{+} = \frac{1}{2} \left( \frac{x}{\norm{x}_2},  \, 1 \right),
		\qquad
		q_{-} = \frac{1}{2} \left( - \frac{x}{\norm{x}_2},  \, 1 \right).
	\]
 Thus, we have that
	\[
		\tr(\expl((x, s)))
		=
		\exp(s + \norm{x}_2) + \exp(s - \norm{x}_2)
		=
		\exp(s) \left( \exp( \norm{x}_2) + \exp(- \norm{x}_2) \right).
	\]
Consequently, 
	\begin{equation*}
	\begin{split}
		\frac{\expl((x, s))}{\tr(\expl((x, s)))}
		&=
		\frac{
			\exp(s + \norm{x}_2) q_{+} + \exp(s - \norm{x}_2) q_{-}
			}{
				\exp(s) \left( \exp( \norm{x}_2) + \exp(- \norm{x}_2) \right)
			} \\
		&=
		\frac{
			\exp(\norm{x}_2)
			}{
			\exp( \norm{x}_2) + \exp(- \norm{x}_2)
			}
			q_{+}
		+
		\frac{
			\exp(-\norm{x}_2)
			}{
			\exp( \norm{x}_2) + \exp(- \norm{x}_2)
			}
			q_{-},
	\end{split}
	\end{equation*}
 which is exactly \eqref{xcsvdsfv}.  Combining \eqref{scmwusoc} and \eqref{xcsvdsfv} we get that 
 $$p_{t+1}=\left(\nexp(-\eta \sum_{i=1}^t m_i ), {1\over 2}\right),
$$
and as $p_{t+1}\in {\rm SOC}_d$ it follows that 
$$b_{t+1}=2\cdot \nexp(-\eta \sum_{i=1}^t m_i )\in \ball^d.$$

Finally, as  $ -e\preceq_{\rm SOC}   (m_t,0)\preceq_{\rm SOC} e$ the regret bound of Theorem \ref{thm:regret} applies, i.e., 
\[
    \sum_{t=1}^{T} \innerprod{(m_t,0)}{ \left({b_t\over 2},{1\over 2} \right)- \left({u\over 2}, {1 \over 2} \right)} \le
    \eta \sum_{t=1}^T \innerprod{(m_t, 0)^2}{\left({b_t\over 2},{1\over 2}\right)} +\frac{1}{\eta}\ln 2
\]
for all $ u\in\ball^d$. Using the fact that 
$$(m_t, 0)^2 = (m_t, 0) \circ (m_t, 0) = (0_d, \norm{m_t}_2^2)$$ and recalling that the EJA inner product for the {\rm SOC} is twice the Euclidean inner product (see equation \eqref{SOC_innerprod}) gives$$\innerprod{(m_t, 0)^2}{\left({b_t\over 2},{1\over 2}\right)} =\norm{m_t}_2^2,$$
while on the left-hand side of the inequality we have that
\[
    \innerprod{(m_t,0)}{ \left({b_t\over 2},{1\over 2} \right)- \left({u\over 2}, {1 \over 2} \right)}
    =
    \frac{1}{2} \innerprod{(m_t, 0)}{(b_t - u, 0)}
    =
    m_t^\top (b_t - u).
\]
Putting everything together we get that 
\[
    \sum_{t=1}^{T} m_t^\top(b_t-u) \le
    { \eta}  \sum_{t=1}^T \|m_t\|_2^2 +\frac{1}{\eta}\ln 2 \quad \text{ for all }u\in \ball^d.
\]
\end{proof}

We now make some remarks on Theorem \ref{thm:regret-ball}.   Since $\norm{m_t}_2 \leq 1 \ \forall t$, we can get from Theorem \ref{thm:regret-ball} that
	\[
		    \sum_{t=1}^{T}m_t^\top(b_t-u) \leq
		  \eta T +\frac{1}{\eta}\ln 2.
	\]

The stepsize $\eta$ that minimizes the  bound  is $\sqrt{ \ln 2/T}$, which gives regret \begin{equation}
\label{regbound_opt_ball}
2 \sqrt{ T  \ln 2  }.
\end{equation}
Finally, if there is no a priori knowledge of the time horizon $T$,  a no-regret algorithm can be obtained from \ref{SCMWU-ball} using the well-known ``doubling trick'', e.g., see \cite{shwartz}. Following this approach, time is broken up into epochs of exponentially increasing length $2^i$, where over the $i$-th epoch \ref{SCMWU-ball} is run with optimized stepsize $\eta_i = \sqrt{2^{-i} \ln 2}$. At time $T$, the regret of this algorithm is upper bounded by 
\begin{equation}
\label{doub_bound_ball}
\frac{\sqrt{2}}{\sqrt{2}-1} \cdot 2 \sqrt{T \ln 2} 
= \frac{2 \sqrt{2}}{\sqrt{2}-1} \sqrt{T \ln 2}.
\end{equation}

\section{Experimental evaluation}\label{sec:experiment2}
\subsection{Regret accrued over symmetric cones of various direct-sum structures}
In our first set of experiments  we  plot  the regret accumulated by \ref{SCMWU} over the trace-one slice of symmetric cones of various direct-sum structures. Figure \ref{fig:dirsum} compares the regret accumulated by \ref{SCMWU} (with doubling trick) run over 100 different sequences of randomized loss vectors against the theoretical upper bound obtained in Equation \eqref{doub_bound}. 

\begin{figure}[!htb]
    \centering
    \begin{minipage}{.24\linewidth}
      \centering 
      
      \includegraphics[width=.95\linewidth]{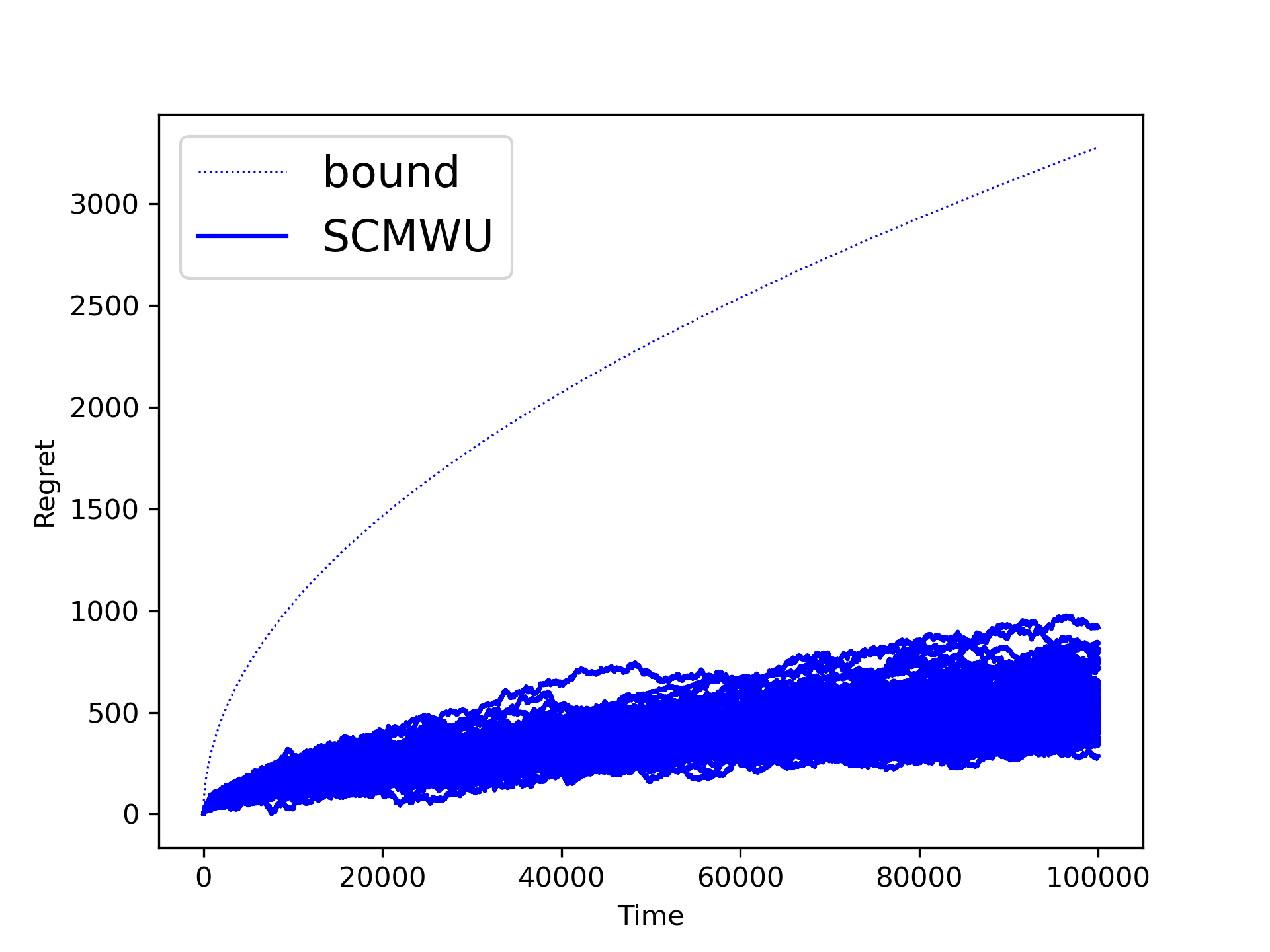}
      $\bigoplus_{d=2}^6 \SOC_d$
    \end{minipage}
    \begin{minipage}{.24\linewidth}
      \centering
      
      \includegraphics[width=.95\linewidth]{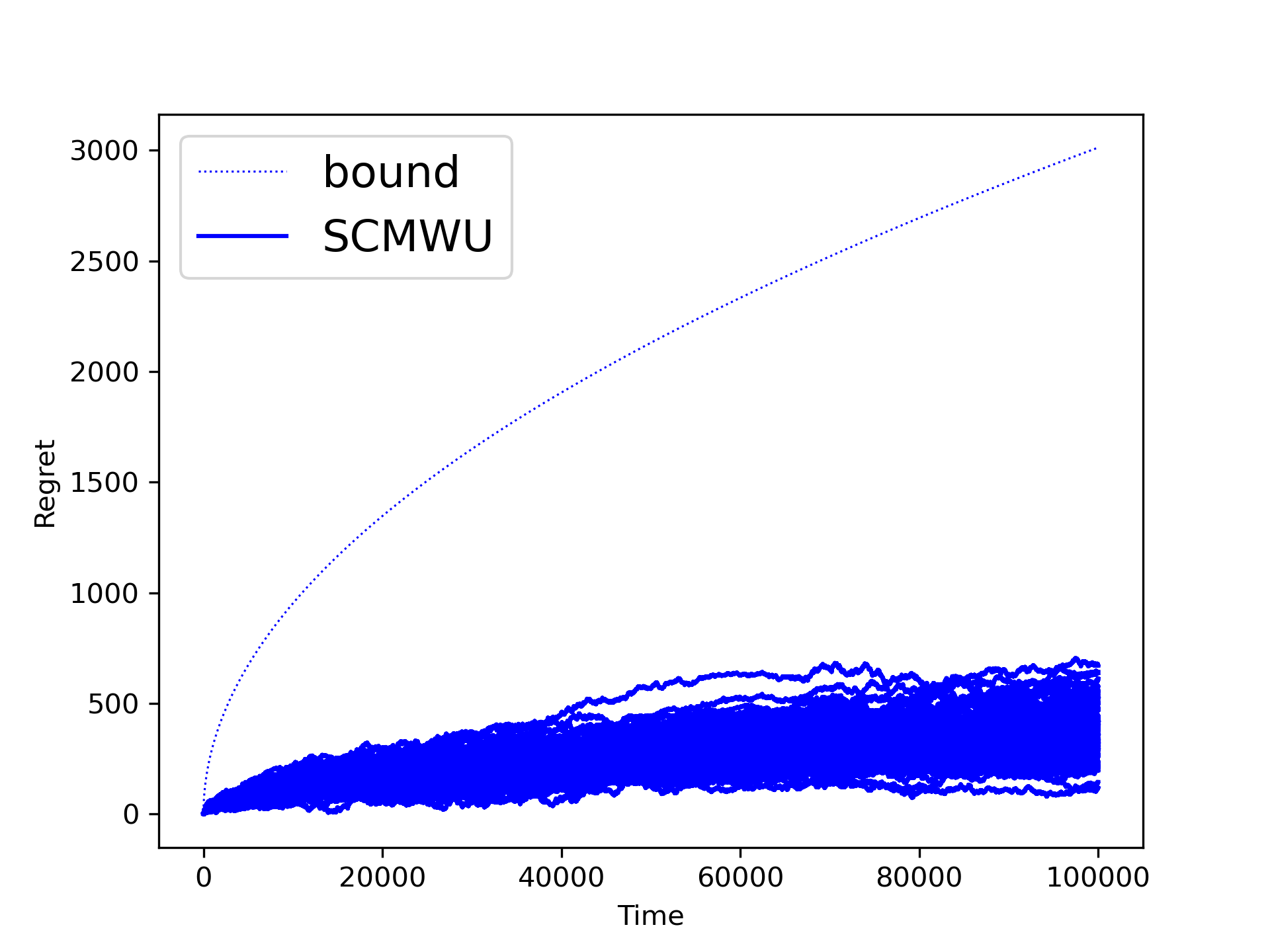}
      $\NO^5 \oplus \SOC_5$
    \end{minipage}
    \begin{minipage}{.24\linewidth}
      \centering  
      
      \includegraphics[width=.95\linewidth]{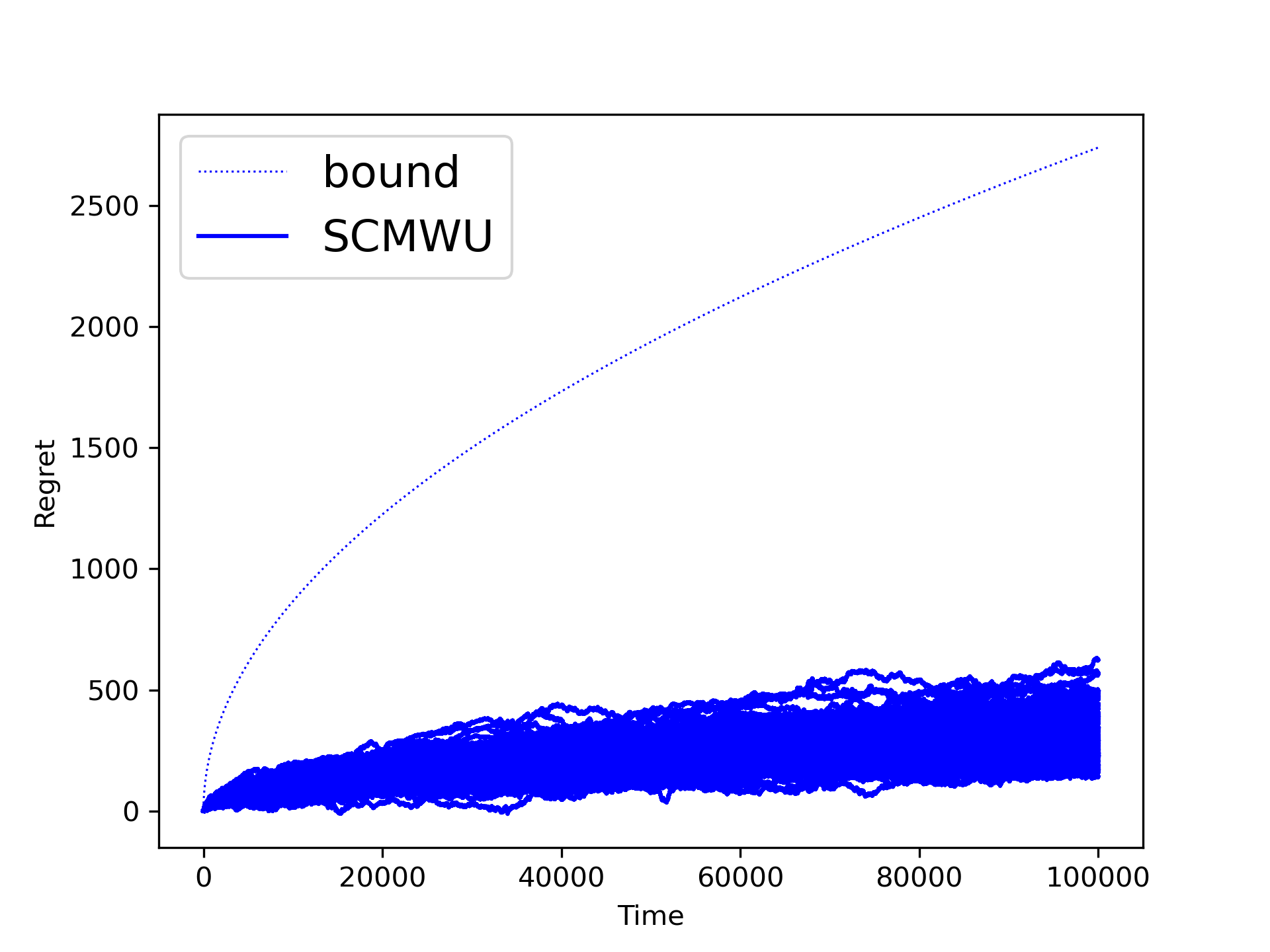}
      $\psdc^3 \oplus \SOC_5$
    \end{minipage}
    \begin{minipage}{.24\linewidth}
      \centering   
      
      \includegraphics[width=.95\linewidth]{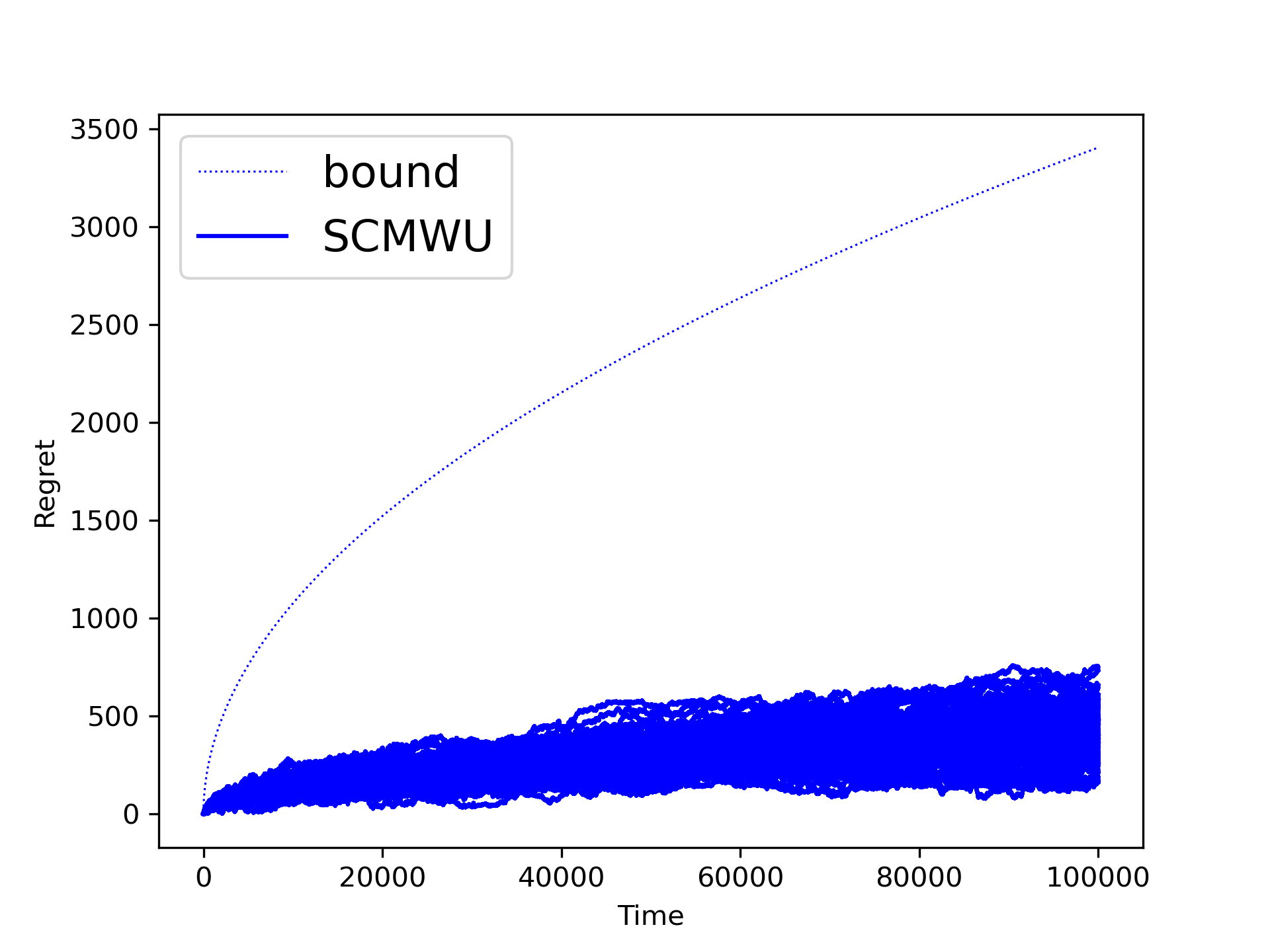}
      $\cc$
    \end{minipage}
    
    \caption{Regret accrued by \ref{SCMWU} on the task of online linear optimization over the $\tr=1$ slice of symmetric cones of various direct-sum structures. The rightmost graph is for the symmetric cone $\cc = \NO^3 \oplus \psdc^2 \oplus \psdc^3 \oplus \SOC_2 \oplus \SOC_3$.}
    \label{fig:dirsum}
\end{figure}

We next perform  numerical experiments to evaluate  the performance of \ref{SCMWU-ball} on the task of online linear optimization over the unit ball. Figure \ref{fig:SCMWU-ball_regbound} compares the regret accumulated by \ref{SCMWU-ball} (with doubling trick) on the task of online linear optimization over unit balls of different dimension against the theoretical upper bound obtained in equation \eqref{doub_bound_ball}, verifying the theoretical regret bound of Theorem \ref{thm:regret-ball}. Since \ref{OGD} is easy to implement in this setting, we also compare in Figure \ref{fig:_vs_OGD_d10_T4} the performance of \ref{SCMWU-ball} against that of \ref{OGD} for the task of online linear optimization over the unit ball, and find that the regret picked up by both algorithms on the same sequences of losses is comparable, with neither algorithm being clearly better.

\begin{figure}[!htb]
    \centering
    \begin{minipage}{.24\linewidth}
      \centering 
      
      \includegraphics[width=.95\linewidth]{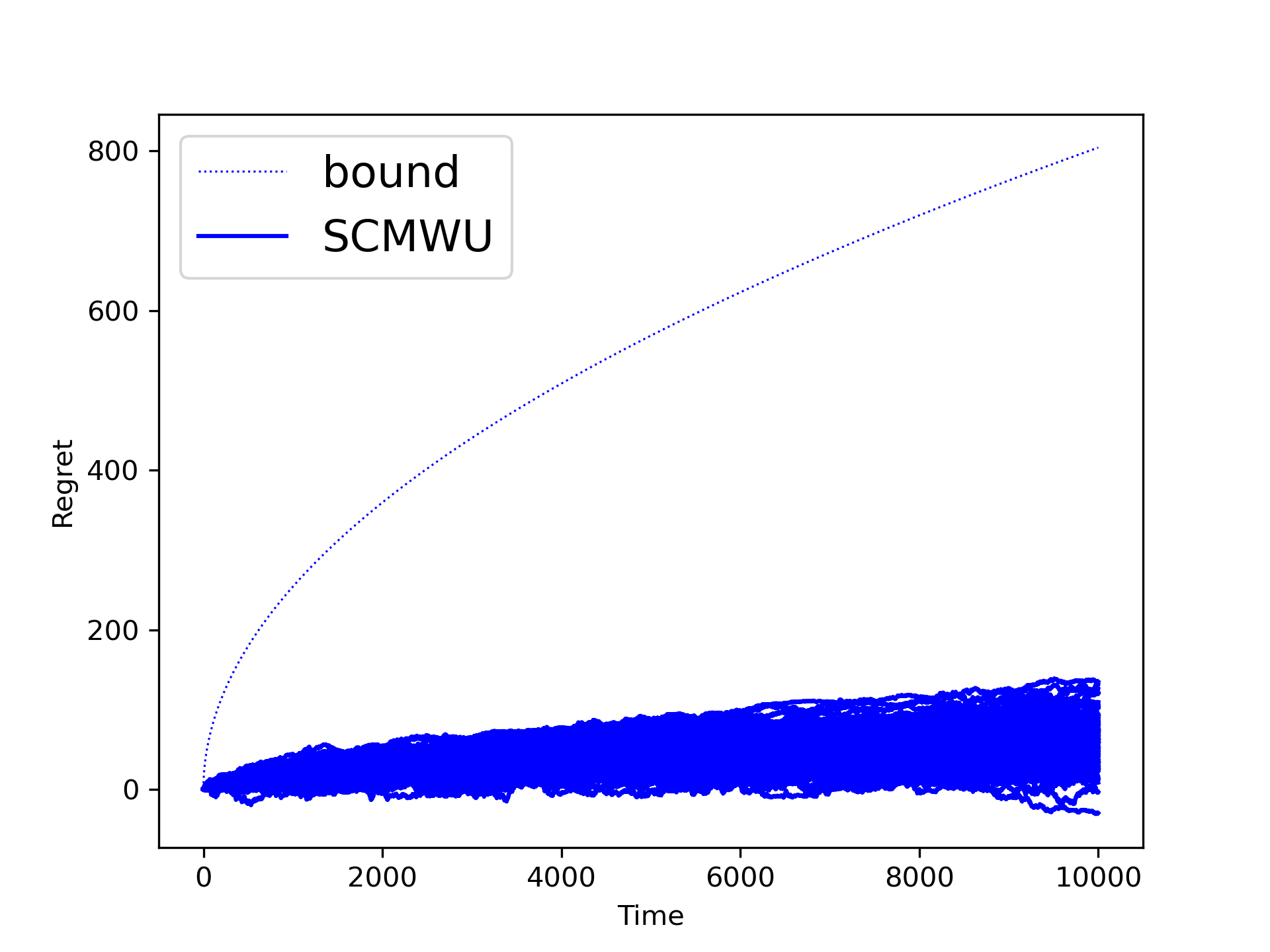}
      $d=2$
    \end{minipage}
    \begin{minipage}{.24\linewidth}
      \centering
      
      \includegraphics[width=.95\linewidth]{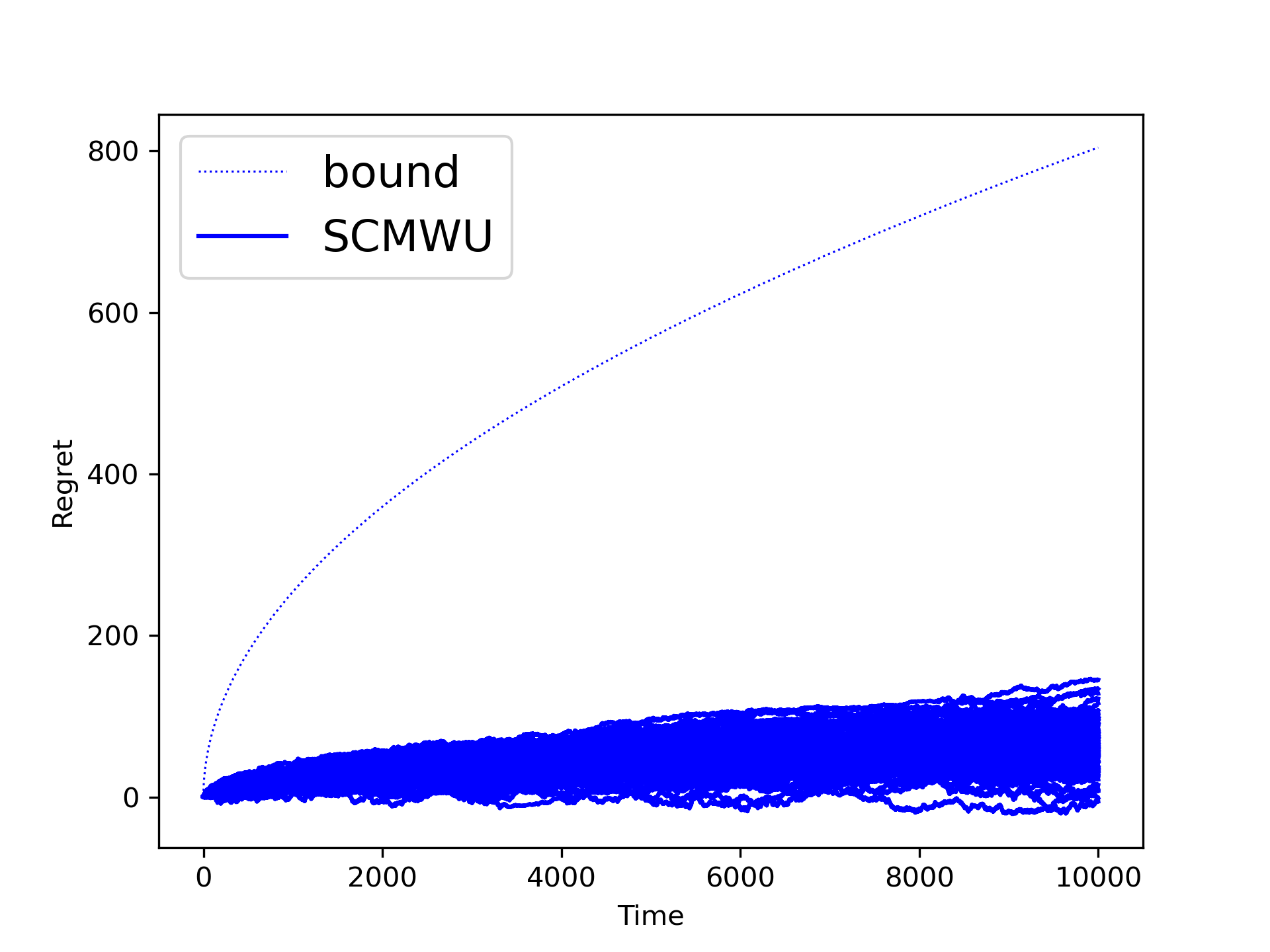}
      $d=3$
    \end{minipage}
    \begin{minipage}{.24\linewidth}
      \centering  
      
      \includegraphics[width=.95\linewidth]{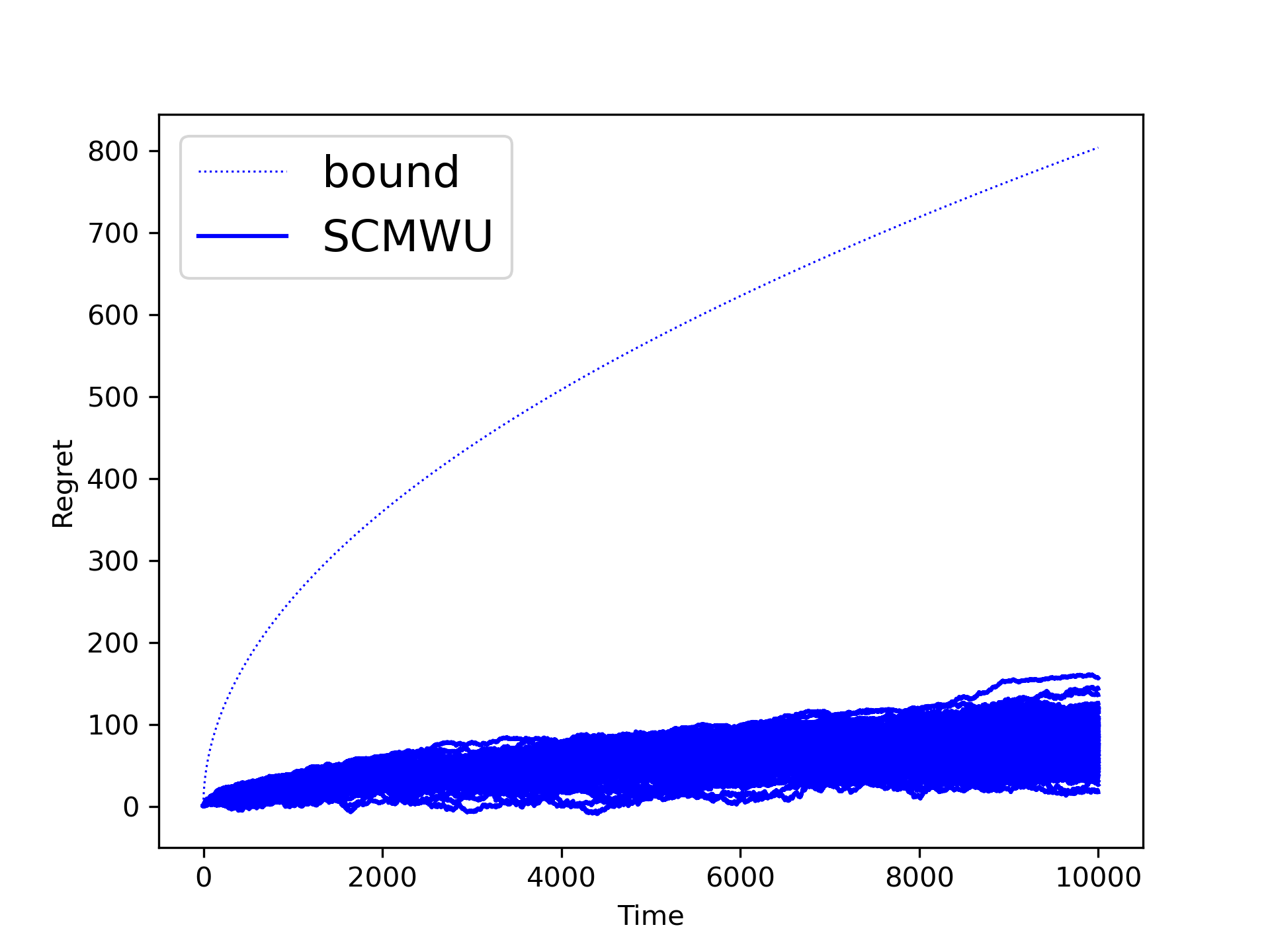}
      $d=5$
    \end{minipage}
    \begin{minipage}{.24\linewidth}
      \centering   
      
      \includegraphics[width=.95\linewidth]{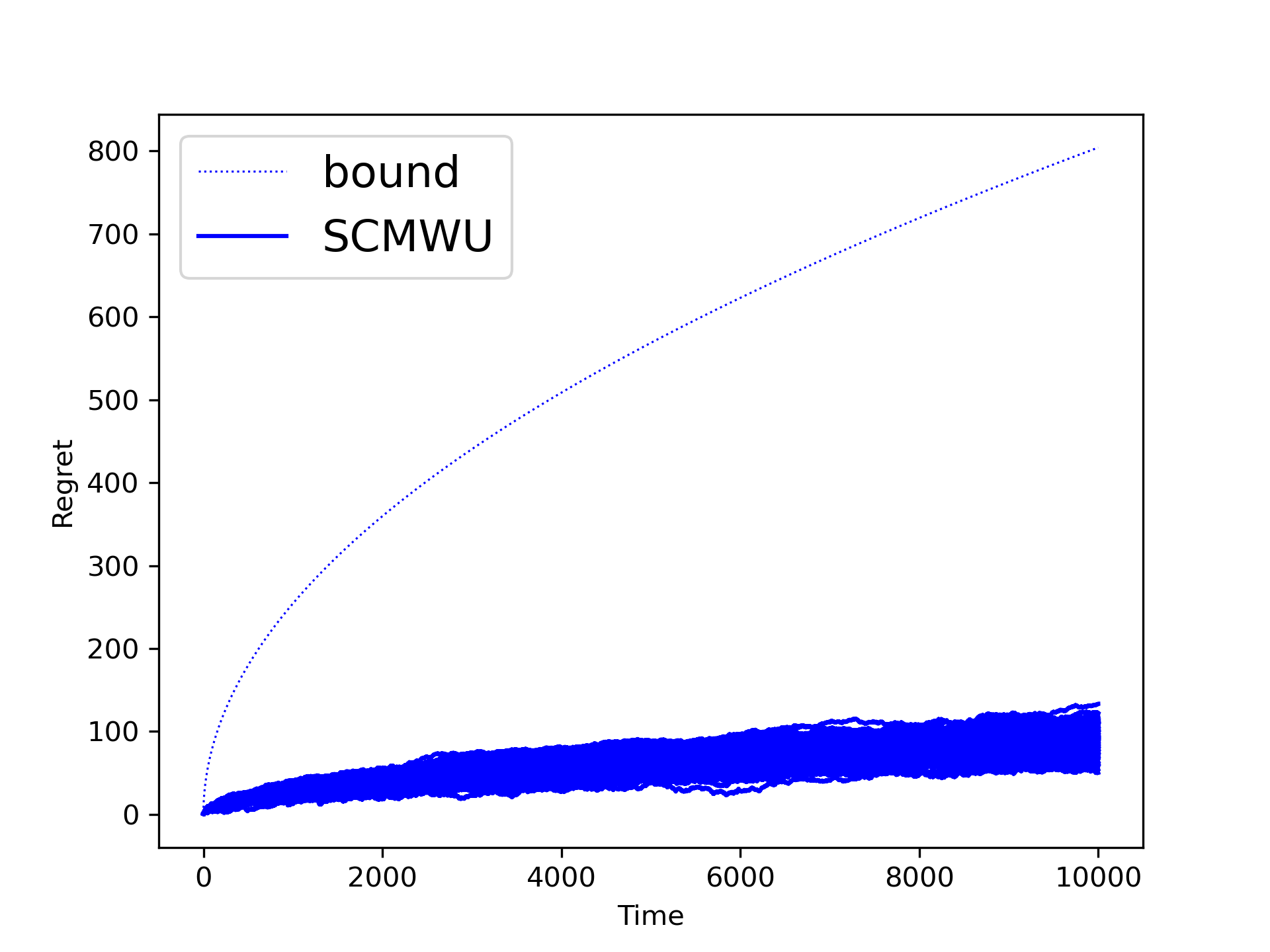}
      $d=10$
    \end{minipage}
    
    \caption{Regret accrued by \ref{SCMWU-ball} on the task of online linear optimization over the $d$-dimensional unit ball, plotted over 100 different sequences of randomized loss vectors for each different dimension $d$.}
    \label{fig:SCMWU-ball_regbound}
\end{figure}

\begin{figure}[!htb]
\centering
    \begin{subfigure}{.19\linewidth}
        \centering
        \includegraphics[width=\linewidth]{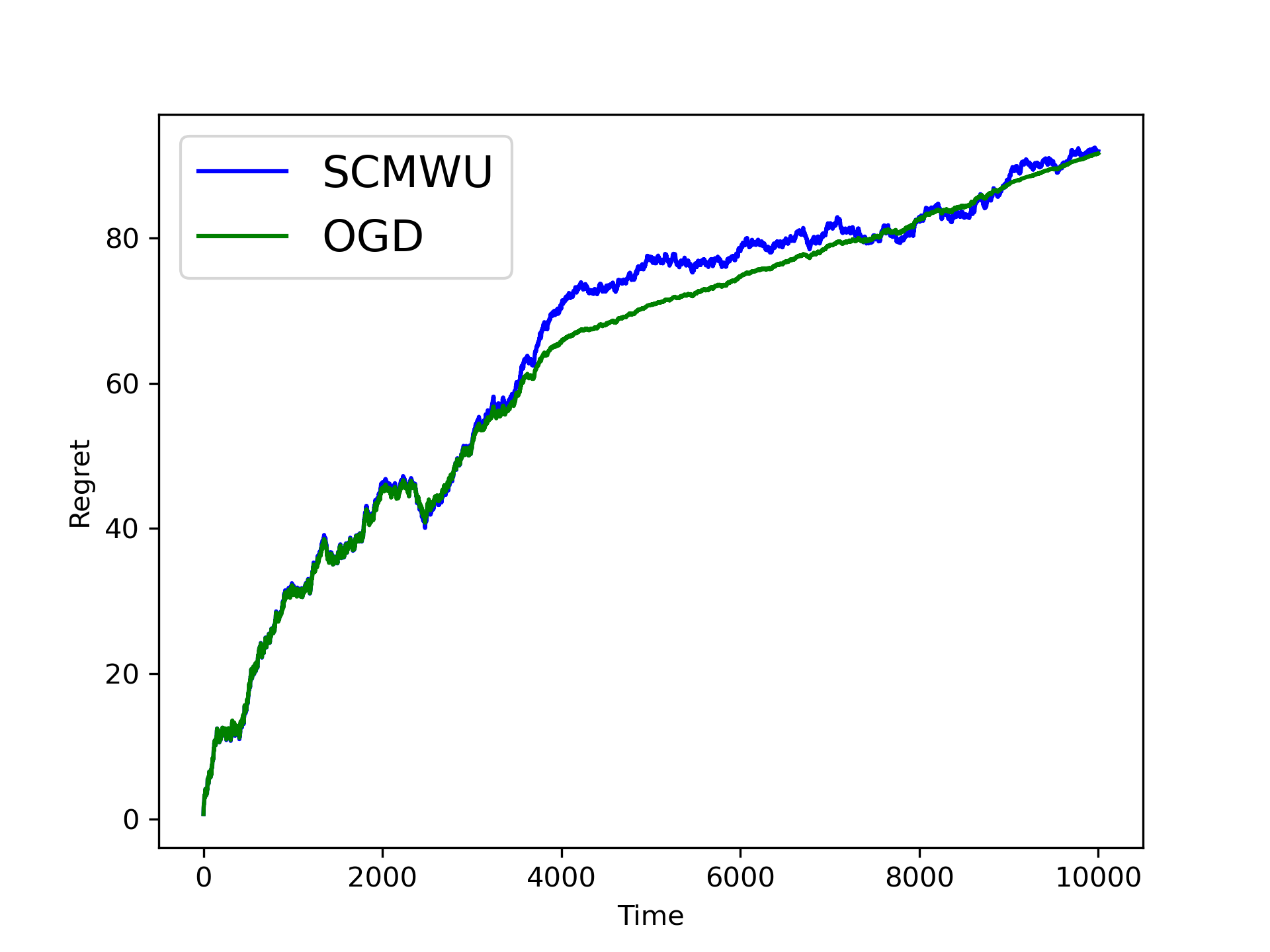}
    \end{subfigure}
    \begin{subfigure}{.19\linewidth}
        \centering
        \includegraphics[width=\linewidth]{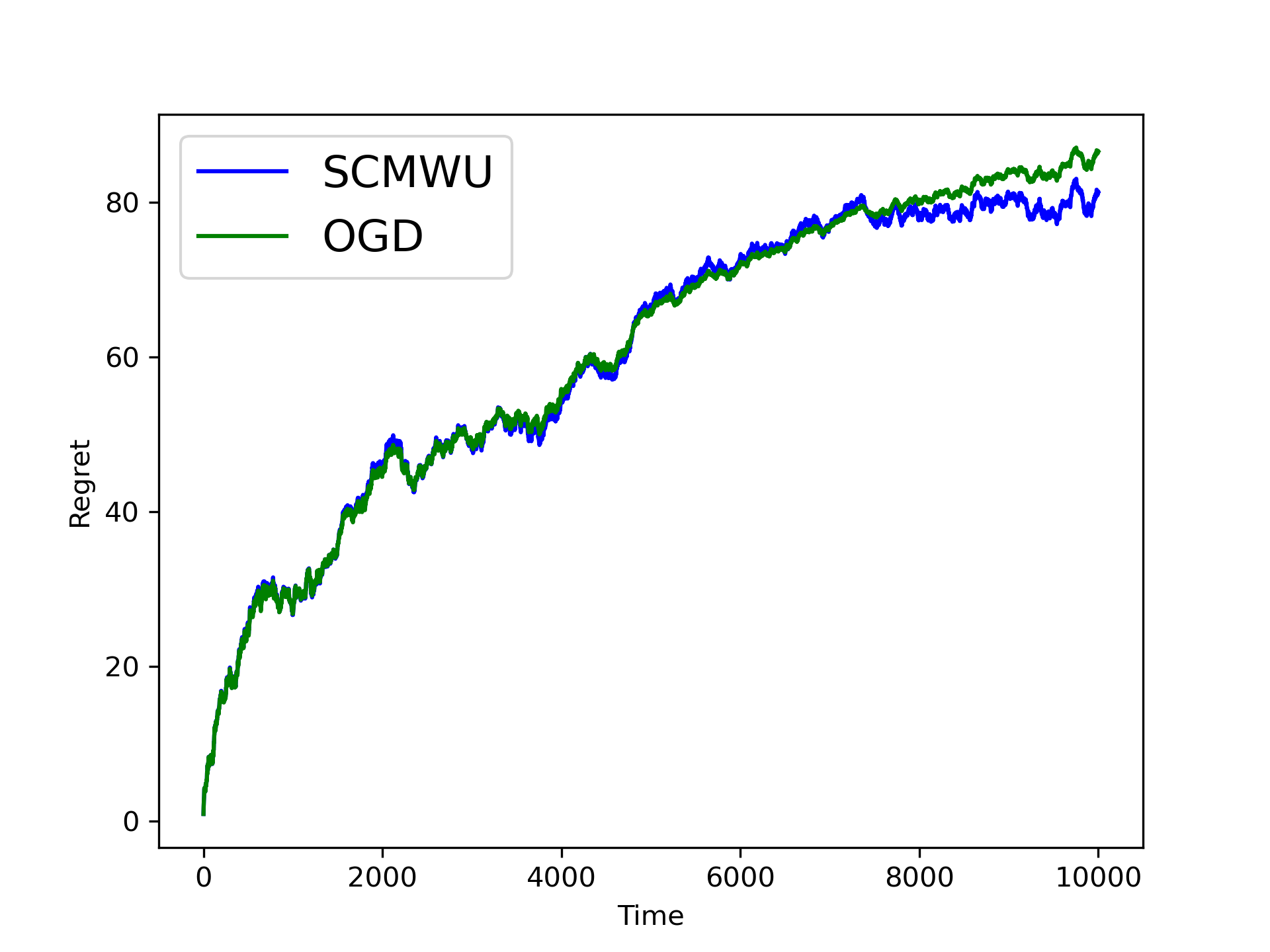}
    \end{subfigure}
    \begin{subfigure}{.19\linewidth}
        \centering
        \includegraphics[width=\linewidth]{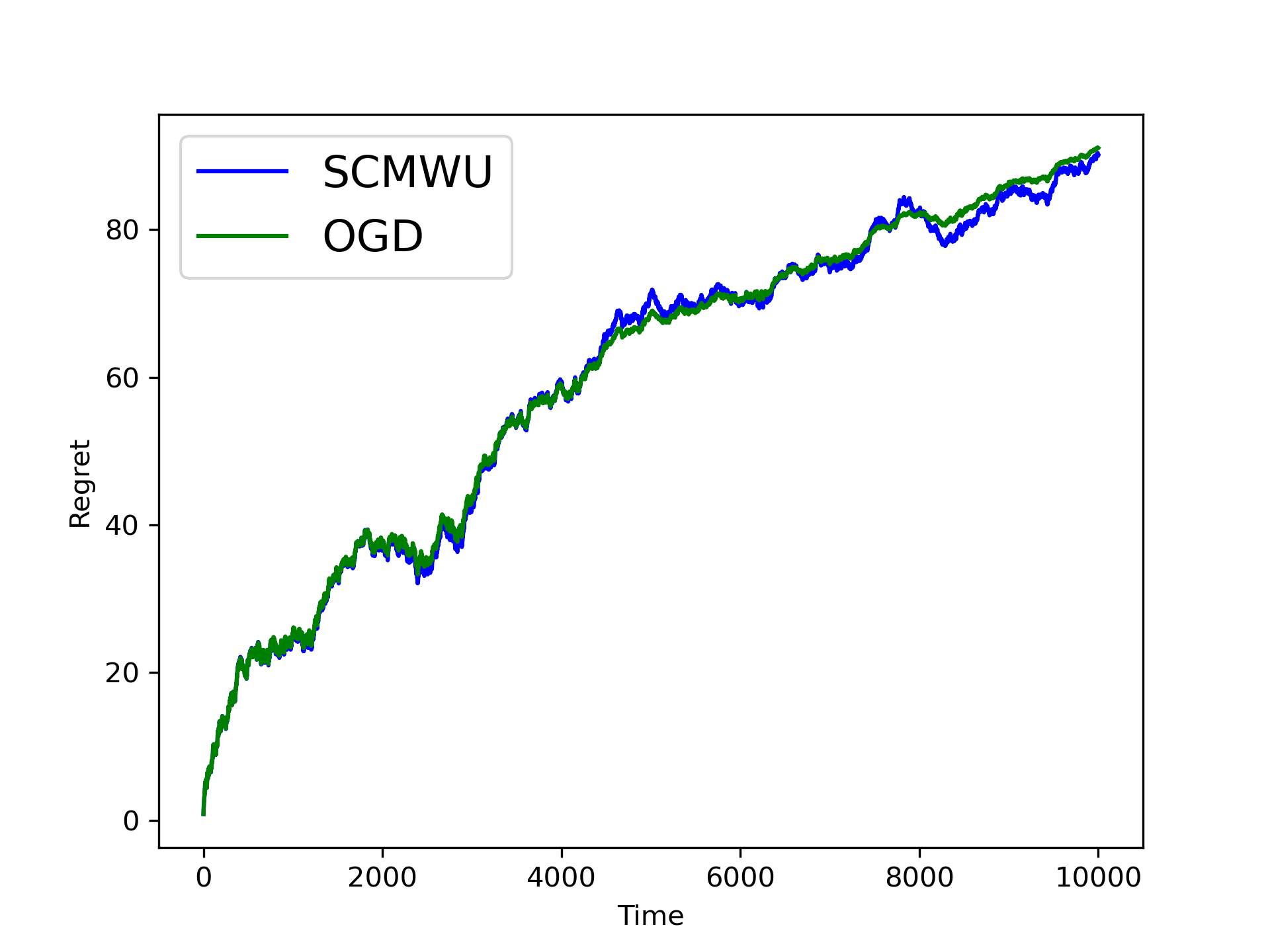}
    \end{subfigure}
    \begin{subfigure}{.19\linewidth}
        \centering
        \includegraphics[width=\linewidth]{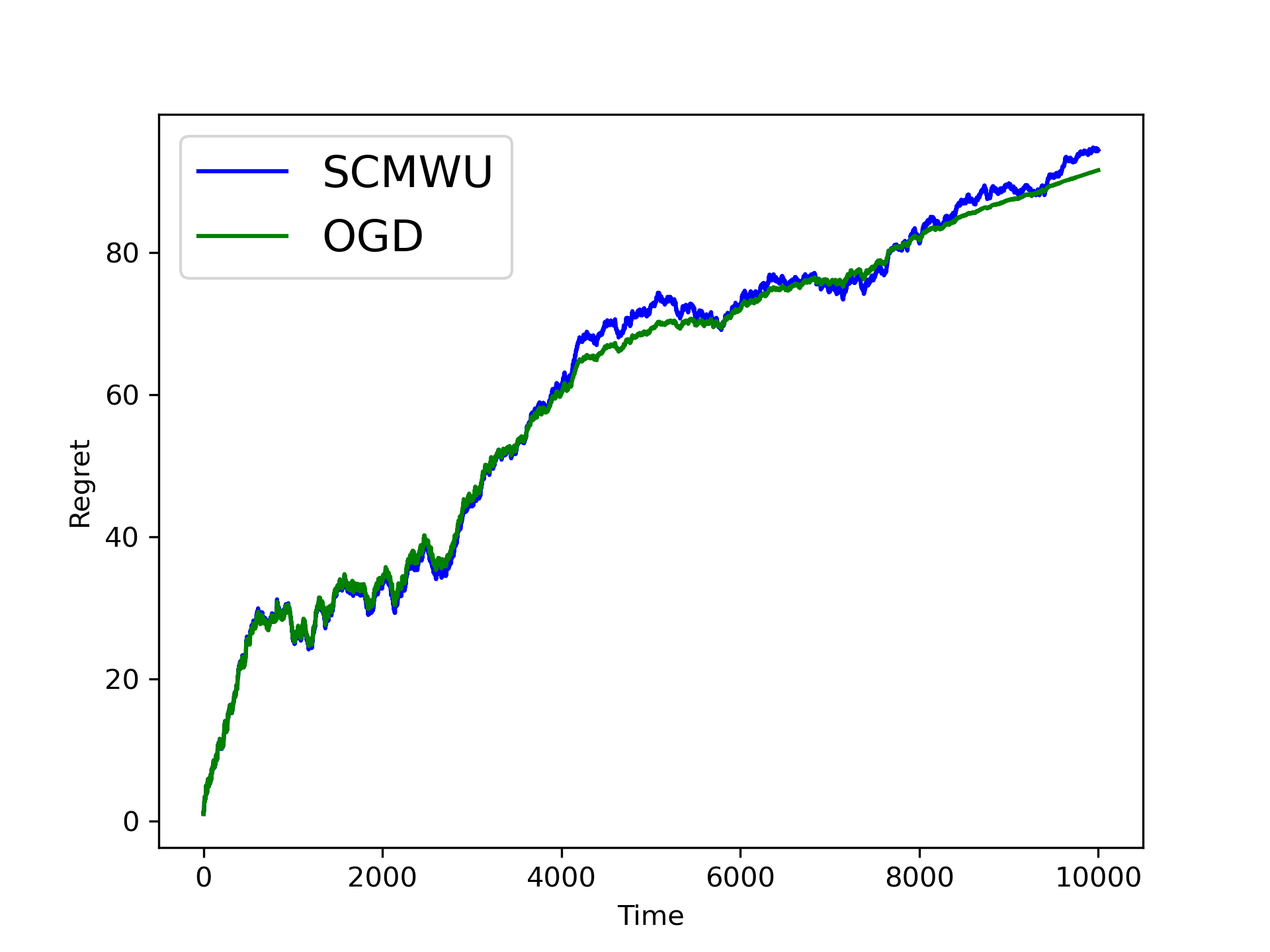}
    \end{subfigure}
    \begin{subfigure}{.19\linewidth}
        \centering
        \includegraphics[width=\linewidth]{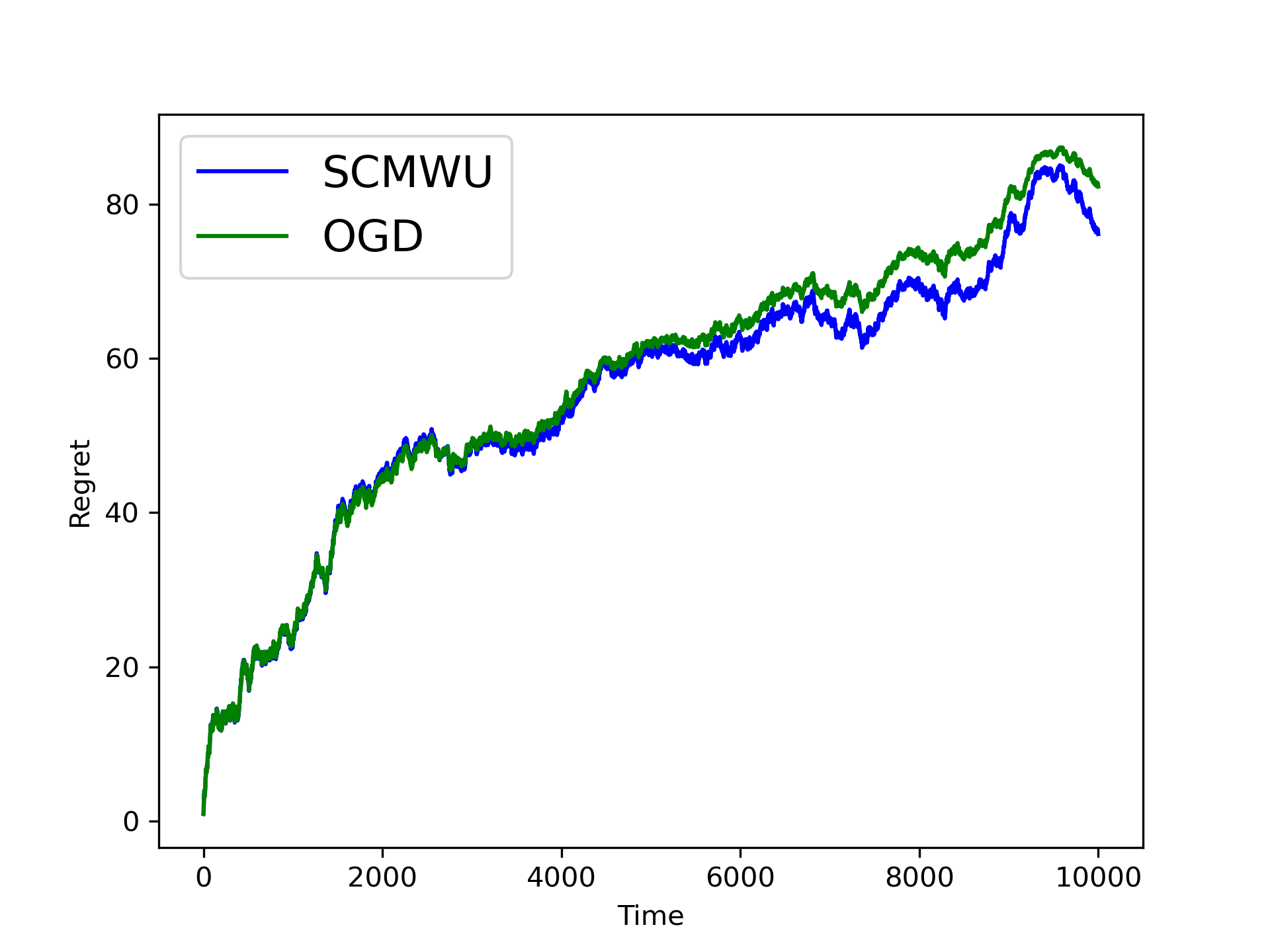}
    \end{subfigure}
    \begin{subfigure}{.19\linewidth}
        \centering
        \includegraphics[width=\linewidth]{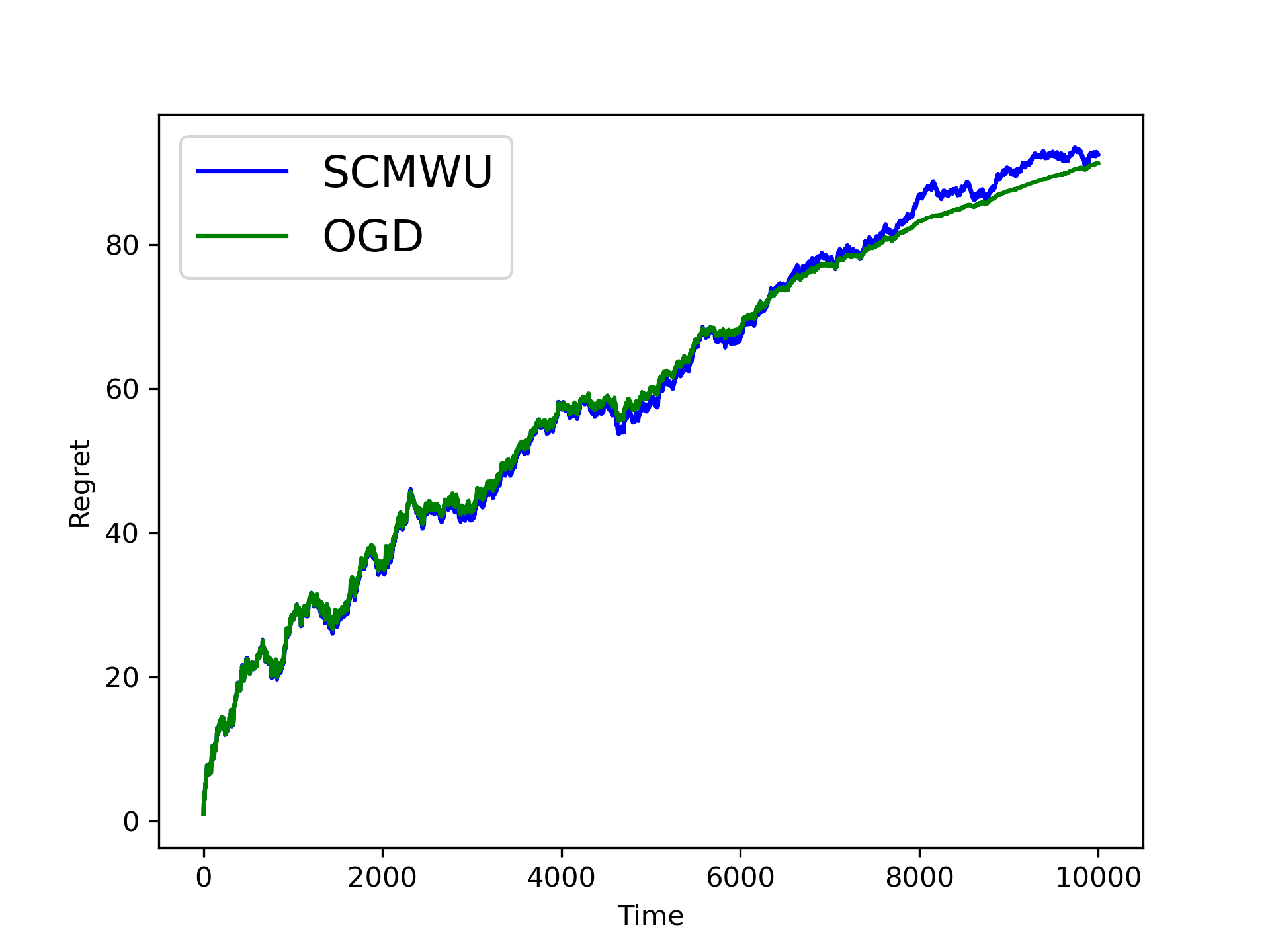}
    \end{subfigure}
    \begin{subfigure}{.19\linewidth}
        \centering
        \includegraphics[width=\linewidth]{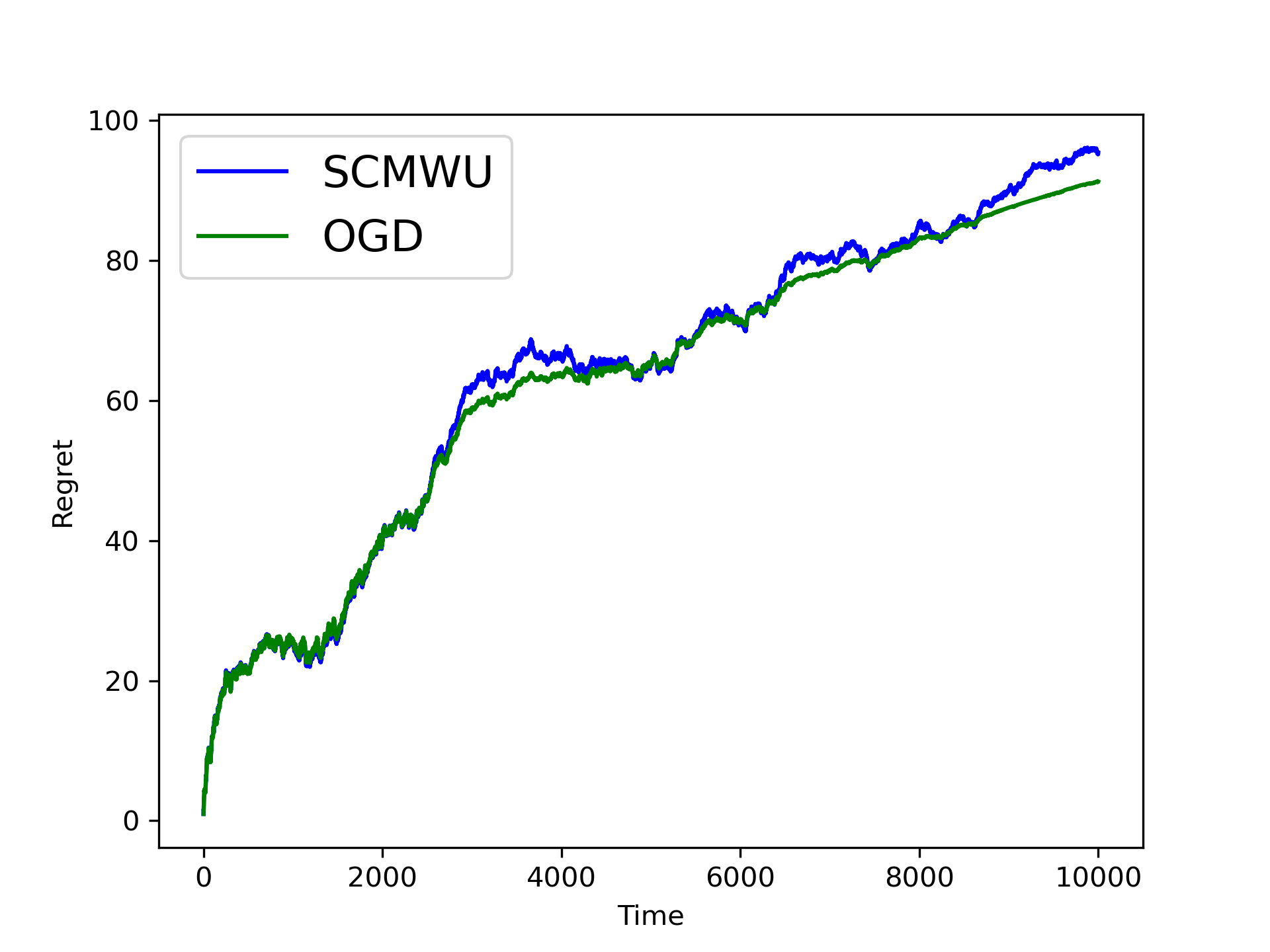}
    \end{subfigure}
    \begin{subfigure}{.19\linewidth}
        \centering
        \includegraphics[width=\linewidth]{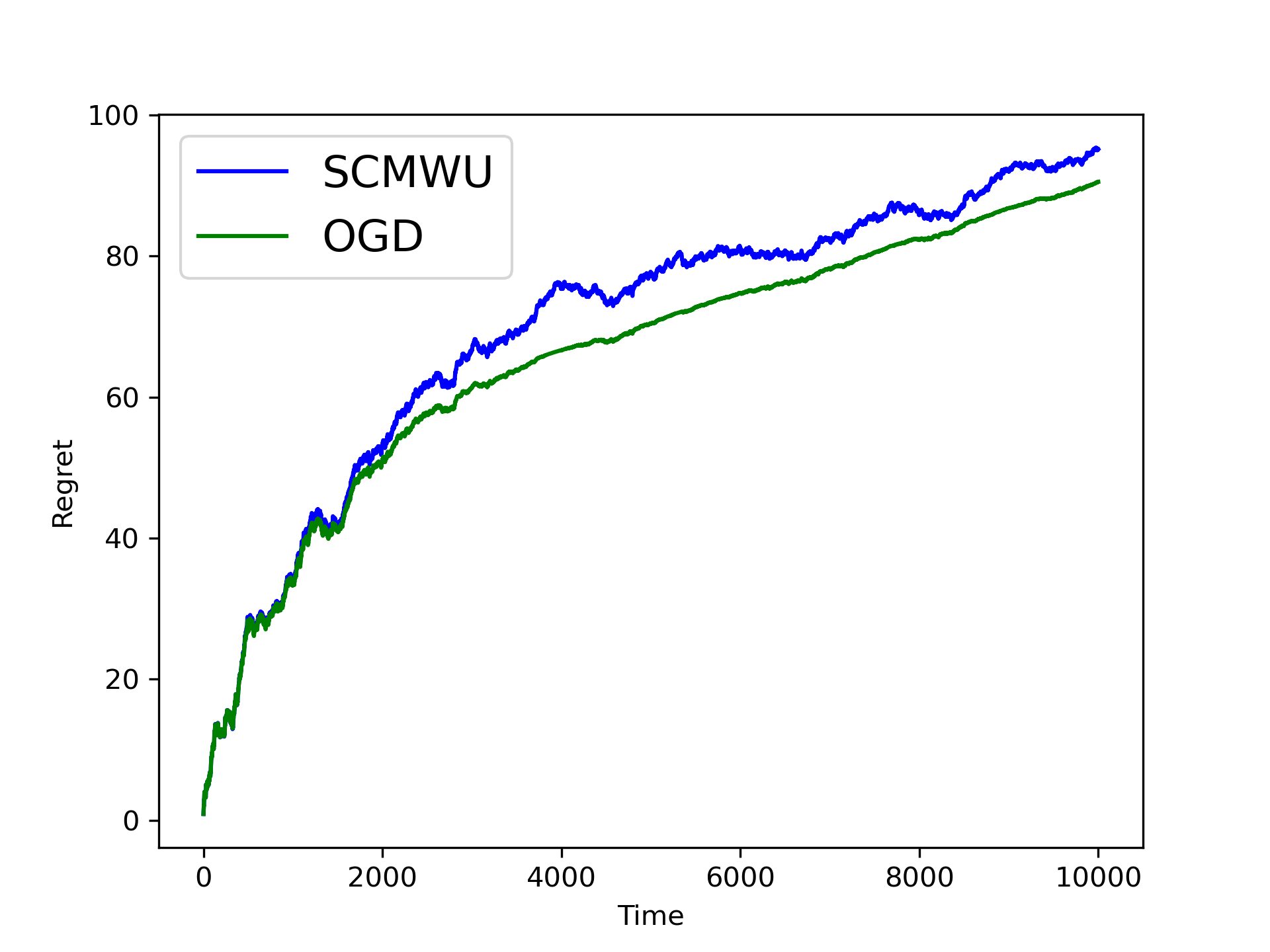}
    \end{subfigure}
    \begin{subfigure}{.19\linewidth}
        \centering
        \includegraphics[width=\linewidth]{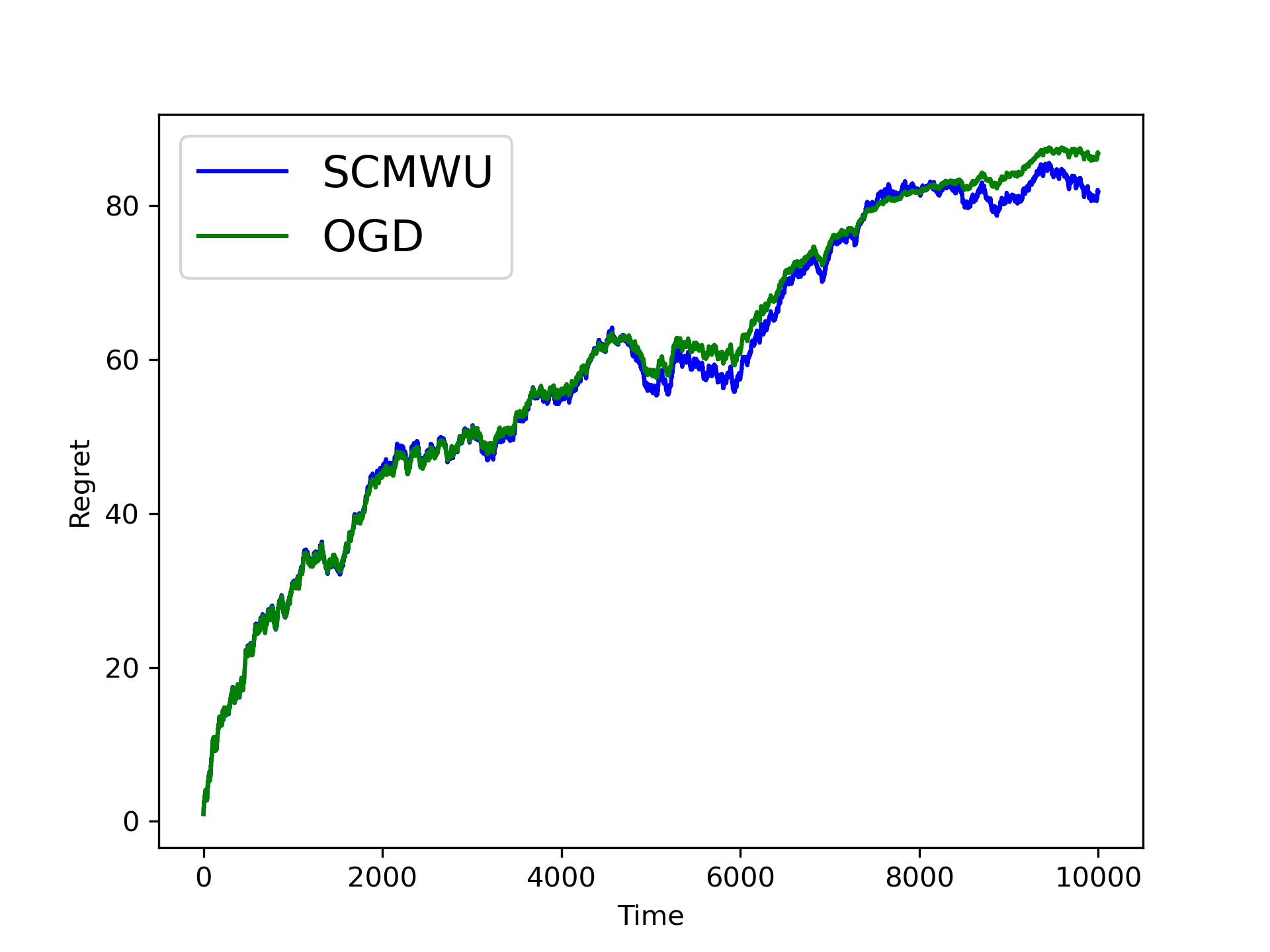}
    \end{subfigure}
    \begin{subfigure}{.19\linewidth}
        \centering
        \includegraphics[width=\linewidth]{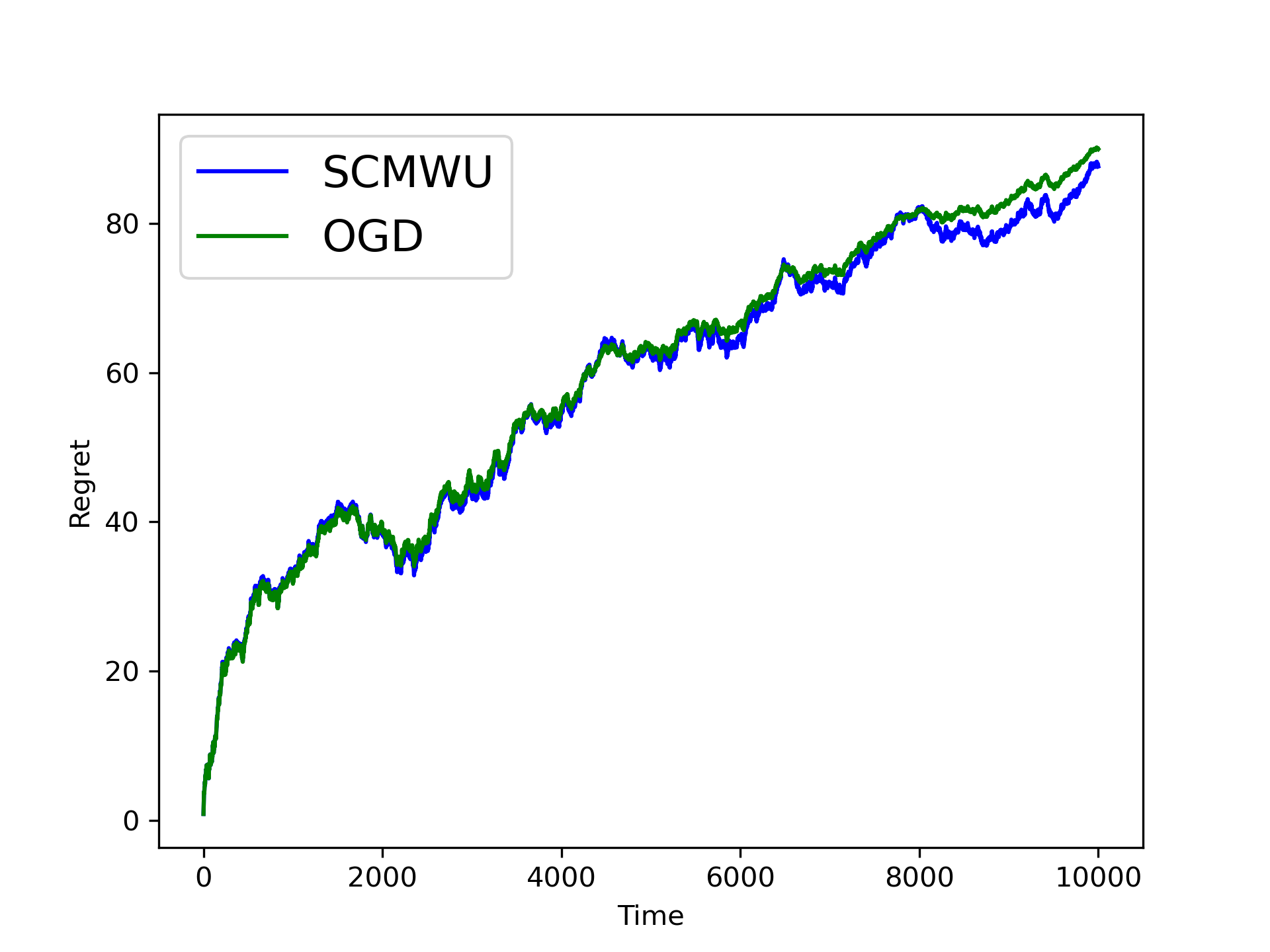}
    \end{subfigure}
    \caption{
    	Regret accrued by \ref{SCMWU-ball} vs \ref{OGD} on the task of online linear optimization over the unit ball in $\R^{10}$. Randomized loss vectors, time horizon of $10^4$, fixed stepsize optimized to time horizon.
	} 
    \label{fig:_vs_OGD_d10_T4}
\end{figure}

\subsection{Online learning in  $l_2$-$l_1$ games}
\label{sec:_SVM_game}

\paragraph{SVM game.} In the Support Vector Machine (SVM) problem we are given $n$ points in $\R^d$   with labels $\pm 1$ and want to find a hyperplane through the origin that separates the two classes  and has the maximum possible margin. Without loss of generality, we can assume that all labels are $+1$, in which case we are trying to find a maximum-margin supporting hyperplane. The SVM dataset can be encoded as the rows of a $n \times d$ matrix $A$. Such a hyperplane is specified by its unit normal vector $x$ satisfying $Ax \geq 0$, and its margin is given by
\[
	m(x) = \min_{1 \leq i \leq n} (Ax)_i= \min_{p \in \triangle} p^\top A x.
\]
Furthermore, the maximum margin attainable is equal to
\[
    \max_{x: \ \norm{x}_2 = 1} m(x) = \max_{x: \norm{x}_2 = 1}  \min_{p \in \triangle} p^\top A x.
\]
Assuming the data is linearly separable (i.e., $\exists \, x \in \R^d$ such that $\min_{p \in \triangle} p^\top A x > 0$), we also have that the maximum attainable margin
\[
    \max_{x: \ \norm{x}_2 = 1} m(x) = \max_{x: \ \norm{x}_2 \leq 1} m(x).
\]
Indeed, equality holds because if $\min_{p \in \triangle} p^\top A x > 0$ for some $x : \ \norm{x}_2 < 1$, then $$\min_{p \in \triangle} p^\top A \frac{x}{\norm{x}_2} > \min_{p \in \triangle} p^\top A x.$$

This naturally leads to the SVM game \cite{clarkson2012sublinear, carmon2019variance}, a zero-sum game played between a \emph{classifier}, who picks a normal vector $x$ in the unit ball (in the $l_2$-norm) and  seeks to find the maximum-margin classifier, and an \emph{adversary}, who picks a distribution $p$ over the data points (i.e., a point in the probability simplex, which is the unit ball in the $l_1$-norm) and seeks to find support vectors of the classifier $x$ in the dataset $A$.  The $x$-player attains utility $p^\top A x$, while the $p$-player attains utility $-p^\top A x$. The value of the SVM game is the maximum margin of a classifier, i.e.,
\[
	v(A) 
 = \max_{x \in \mathbb{B}} \min_{p \in \triangle} p^\top A x 
  = \min_{p \in \triangle} \max_{x \in \mathbb{B}}  p^\top A x,
\]
and any Nash equilibrium of the SVM game is a pair $(x^*, p^*)$  of a maximum-margin classifier $x^*$ and a distribution $p^*$ over support vectors. Using well-known results from the theory of learning in games \cite{cesabianchi}, if each player adjusts their strategies using no-regret algorithms, then their time-averaged strategies  converge to Nash equilibria of the SVM game. Specifically, we have the following result:
\begin{theorem}
\label{thm:SVMgameconvergence}
Consider an SVM instance where all the data points lie in the unit ball  $\ball^d$, i.e., $\norm{A_{i :}}_2 \leq 1 \ \forall i \in [n]$. 
Suppose  we fix the time horizon $T$ and  use \ref{MWU} to learn a distribution  $p \in \triangle^{n}$ over data points and \ref{SCMWU-ball} to learn a  normal vector $x \in \ball^d$ using 
the optimized fixed stepsizes  $ \sqrt{\frac{\ln n}{T}}$ and  $ \sqrt{\frac{ \ln 2}{T}}$ 
respectively. Setting \begin{equation*}
	\epsilon
        = 
	{ 2\over \sqrt{T}} \left(\sqrt{\ln n} + \sqrt{ \ln 2} \right),
\end{equation*}
 the time-averaged strategies  $\overline{p} = \frac{1}{T} \sum_{t=1}^T p_t$ and $\overline{x} = \frac{1}{T} \sum_{t=1}^T x_t$    form  an $\epsilon$-approximate Nash equilibrium of the SVM game  and moreover, $\min_{p \in \triangle} p^\top A \overline{x}$ and $\max_{x \in \ball} \overline{p}^\top A x$ are both within $\epsilon$ of the value of the game.  In particular, the margin $m(\bar{x})$ attained by the classifier $\bar{x}$ is within $\epsilon$ of the maximum  margin. 

\end{theorem}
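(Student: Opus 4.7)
The plan is to apply the standard ``no-regret dynamics converge to Nash equilibrium'' template for two-player zero-sum games, using the regret bounds already established in the paper. First I would cast each player's learning problem as online linear optimization against the opponent's play: the $x$-player (classifier) sees loss vector $m_t^{(x)} = -A^\top p_t$ at time $t$ so that minimizing $\sum_t \innerprod{m_t^{(x)}}{x_t}$ corresponds to maximizing $\sum_t p_t^\top A x_t$; the $p$-player (adversary) sees loss vector $m_t^{(p)} = A x_t$ so that minimizing $\sum_t \innerprod{m_t^{(p)}}{p_t}$ corresponds to maximizing $-\sum_t p_t^\top A x_t$. I would then verify the hypotheses of the two regret bounds: the assumption $\norm{A_{i:}}_2 \leq 1$ for all $i$ together with $p_t \in \triangle^n$ gives $\norm{A^\top p_t}_2 \leq 1$ by convexity of the norm, so Theorem \ref{thm:regret-ball} applies to the $x$-player; the same assumption together with $\norm{x_t}_2 \leq 1$ gives $|(A x_t)_i| \leq 1$ for all $i$, so the standard \ref{MWU} regret bound applies to the $p$-player.

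Next I would invoke the two regret guarantees with the stated optimized stepsizes to obtain, for every $u \in \ball^d$ and $q \in \triangle^n$,
\begin{align*}
\sum_{t=1}^T p_t^\top A u - \sum_{t=1}^T p_t^\top A x_t &\leq 2 \sqrt{T \ln 2}, \\
\sum_{t=1}^T p_t^\top A x_t - \sum_{t=1}^T q^\top A x_t &\leq 2 \sqrt{T \ln n}.
\end{align*}
Adding these inequalities, dividing through by $T$, and using linearity to pass to the time averages $\overline{x} = \frac{1}{T}\sum_t x_t$ and $\overline{p} = \frac{1}{T}\sum_t p_t$ yields
\[
\max_{u \in \ball^d} \overline{p}^\top A u - \min_{q \in \triangle^n} q^\top A \overline{x} \leq \frac{2}{\sqrt{T}}\bigl(\sqrt{\ln 2} + \sqrt{\ln n}\bigr) = \epsilon.
\]

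From here the remaining conclusions are immediate. The minimax value $v(A) = \max_{x \in \ball^d} \min_{p \in \triangle^n} p^\top A x = \min_{p \in \triangle^n} \max_{x \in \ball^d} p^\top A x$ is sandwiched between $\min_q q^\top A \overline{x}$ and $\max_u \overline{p}^\top A u$, so both quantities lie within $\epsilon$ of $v(A)$. Since $\overline{p}^\top A \overline{x}$ likewise lies in that same interval, the two one-sided Nash conditions $\max_{u} \overline{p}^\top A u \leq \overline{p}^\top A \overline{x} + \epsilon$ and $\min_{q} q^\top A \overline{x} \geq \overline{p}^\top A \overline{x} - \epsilon$ both follow, establishing the $\epsilon$-approximate Nash equilibrium property. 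The margin claim follows because $m(\overline{x}) = \min_{p \in \triangle^n} p^\top A \overline{x}$ and, under the linear separability assumption, $v(A)$ equals the maximum attainable margin. There is no serious obstacle in this proof; the only places requiring care are (i) the sign conventions when recasting the players' utilities as losses, and (ii) checking that the loss vectors actually lie in the admissible ranges required by Theorem \ref{thm:regret-ball} and the MWU regret bound, both of which reduce to the norm bound $\norm{A_{i:}}_2 \leq 1$.
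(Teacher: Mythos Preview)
Your proposal is correct and follows essentially the same approach as the paper: both verify the loss-vector norm bounds from $\norm{A_{i:}}_2 \le 1$, invoke the \ref{SCMWU-ball} and \ref{MWU} regret guarantees with the optimized stepsizes, and then sandwich the game value between $\min_{q}q^\top A\overline{x}$ and $\max_{u}\overline{p}^\top A u$ to deduce the $\epsilon$-Nash and margin claims. The only cosmetic difference is that the paper routes the argument through upper and lower bounds on the time-averaged empirical utility $\frac{1}{T}\sum_t p_t^\top A x_t$, whereas you add the two regret inequalities directly; the content is identical.
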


\begin{proof} From the perspective  of the $x$-player, their  loss function (i.e., the negative of their  utility) at time $t$   is the  linear function $-p_t^\top Ax =x^\top m_t$ where  $m_t=-A^\top p_t$. By the assumption that $\|A_{i:}\|_2\le 1 \; \forall \ i \in [n]$ we have that the vectors $m_t$ satisfy 
$\norm{m_t}_2 = \norm{A^\top p_t}_2 \leq 1.$  Thus, if the $x$-player uses \ref{SCMWU-ball} with stepsize $\sqrt{\frac{ \ln 2}{T}}$ to update their strategy, we get from  \eqref{regbound_opt_ball} that
\[
		\frac{1}{T} \left(\max_{x \in \ball} \sumt p_t^\top A x - \sumt p_t^\top A x_t \right) \leq 2\sqrt{\ln 2 \over T}.
\] 

We then get the following lower bound on the time-averaged empirical utility $\ta p_t^\top A x_t$:
	\begin{equation}
	\label{bound:_EmpUtil_lower}
	\begin{split}
		\ta p_t^\top A x_t
		&\geq
		\max_{x \in \ball} \ta p_t^\top A x - 2\sqrt{\ln 2 \over T} \\
		&=
		\max_{x \in \ball} \overline{p}^\top A x  - 2\sqrt{\ln 2 \over T} \\
		&\geq
		\min_{p \in \triangle} \max_{x \in \ball} p^\top A x - 2\sqrt{\ln 2 \over T} \\
		&= v(A) - 2\sqrt{\ln 2 \over T},
	\end{split}
	\end{equation}
	where $v(A) = \max_{x \in \ball} \min_{p \in \triangle} p^\top A x =  \min_{p \in \triangle} \max_{x \in \ball} p^\top A x$ is the value of the game. 
 
 On the other hand, from the perspective of the $p$-player, their loss function at time $t$ is the linear function $p^\top A x_t = p^\top \tilde{m}_t$ where $\tilde{m}_t = Ax_t$. By the assumption that $\|A_{i:}\|_2\le 1 \; \forall \ i \in [n]$ we have, since $x_t \in \ball$, that the loss vectors $\tilde{m}_t$ satisfy
 \[
    \abs{(\tilde{m}_t)_i} = 
	\abs{(Ax_t)_i}
	=
	\abs{ \langle A_{i :},  x_t\rangle }
	\leq
	\norm{A_{i :}}_2 \norm{x_t}_2
	\leq
	1
    \qquad 
    \forall \ t = 1, 2, \ldots, \;
    \forall \ i \in [n].
\]
Thus, if the $p$-player uses \ref{MWU} with stepsize $\sqrt{\frac{\ln n}{T}}$ to update their strategy, we get from the standard regret bound for \ref{MWU} (see, e.g., \cite{ART:AHK12}) that 
	\[
		\frac{1}{T} \left( \sumt p_t^\top A x_t - \min_{p \in \triangle} \sumt p^\top A x_t \right) \leq 2 \sqrt{\frac{\ln n}{T}}.
	\]
	We then get the following upper bound on the time-averaged empirical utility:
	\begin{equation}
	\label{bound:_EmpUtil_upper}
	\begin{split}
		\ta p_t^\top A x_t
		&\leq
		\min_{p \in \triangle} \ta p^\top A x_t + 2 \sqrt{\frac{\ln n}{T}} \\
		&=
		\min_{p \in \triangle} p^\top A \overline{x} + 2 \sqrt{\frac{\ln n}{T}} \\
		&\leq
		\max_{x \in \ball} \min_{p \in \triangle} p^\top A x + 2 \sqrt{\frac{\ln n}{T}} \\
		&=
		v(A) + 2 \sqrt{\frac{\ln n}{T}}.
	\end{split}
	\end{equation}
	Combining \eqref{bound:_EmpUtil_lower} and \eqref{bound:_EmpUtil_upper} gives us the following chain of inequalities
	\begin{equation}
	\label{zerosum_chain}
		v(A) - 2\sqrt{\ln 2 \over T}
		\leq
		\max_{x \in \ball} \overline{p}^\top A x - 2\sqrt{\ln 2 \over T}
		\leq
		\ta p_t^\top A x_t
		\leq
		\min_{p \in \triangle} p^\top A \overline{x} + 2 \sqrt{\frac{\ln n}{T}}
		\leq
		v(A) + 2 \sqrt{\frac{\ln n}{T}},
	\end{equation}	
	from which we conclude that
	\[
		\max_{x \in \ball} \overline{p}^\top A x \leq v(A) + \frac{2(\sqrt{\ln n} + \sqrt{\ln 2})}{\sqrt{T}}
		\qquad
		\text{ and }
		\qquad
		\min_{p \in \triangle} p^\top A \overline{x} \geq v(A) - \frac{2(\sqrt{\ln n} + \sqrt{\ln 2})}{\sqrt{T}},
	\]
	i.e., that $\max_{x \in \ball} \overline{p}^\top A x$ and $\min_{p \in \triangle} p^\top A \overline{x}$ are within $ \frac{2(\sqrt{\ln n} + \sqrt{\ln 2})}{\sqrt{T}} $ of the value of the game. (It is always true that $\max_{x \in \ball} \overline{p}^\top A x \geq \min_{p \in \triangle} \max_{x \in \ball} \overline{p}^\top A x = v(A)$ and $\min_{p \in \triangle} p^\top A \overline{x} \leq \max_{x \in \ball} \min_{p \in \triangle} p^\top A \overline{x} = v(A)$.) Finally, since
	\[
		\min_{p \in \triangle} p^\top A \overline{x}
		\leq
		\overline{p}^\top A \overline{x}
		\leq
		\max_{x \in \ball} \overline{p}^\top A x,
	\]
	we also get from \eqref{zerosum_chain} that
	\[
		\max_{x \in \ball} \overline{p}^\top A x - \frac{2(\sqrt{\ln n} + \sqrt{\ln 2})}{\sqrt{T}}
		\leq
		\overline{p}^\top A \overline{x}
		\leq
		\min_{p \in \triangle} p^\top A \overline{x} + \frac{2(\sqrt{\ln n} + \sqrt{\ln 2})}{\sqrt{T}},
	\]
	i.e., that $(\overline{p}, \overline{x})$ is an $\frac{2(\sqrt{\ln n} + \sqrt{\ln 2})}{\sqrt{T}}$-approximate Nash equilibrium.
\end{proof}

In other words, for any $\epsilon > 0$, if we run (optimized) fixed-stepsize \ref{SCMWU-ball} against MWU for a time horizon of at least
\begin{equation}
\label{SVM_T}
	T = 4 \left(\sqrt{\ln n} + \sqrt{2 \ln 2} \right)^2 \epsilon^{-2} 
\end{equation}
then we are guaranteed that the time-averaged strategies form an $\epsilon$-Nash equilibrium and that $\min_{p \in \triangle} p^\top A \overline{x}$ and $\max_{x \in \ball} \overline{p}^\top A x$ are both within $\epsilon$ of the value of the game. In particular, if the data is linearly separable 
then
the margin attained by $\overline{x}$ is within $\epsilon$ of the maximum margin (i.e., the margin attained by the best possible classifier).

    \paragraph{Experiments (SVM).}
    
    We ran \ref{SCMWU-ball} against \ref{MWU} in the SVM game  using data of varying dimensions ($d=2, 3, 5, 10$) and different time horizons ($T = 10^2, 10^3$). Our analysis focused on the margins achieved by the classifier $\overline{x}$. We performed these experiments on 100 randomized instances, each consisting of $n=10^3$ linearly separable data points in $\mathbb{R}^d$ with a Euclidean norm of at most 1 and a separation margin of at least  0.1. 

Table \ref{table:SVMratio} presents the mean and worst-case ratios of the margin achieved by the learned classifier compared to the \emph{generated margin}, which is the margin of the data points along the direction of separation that we used to generate the data (and thus an approximation to the maximum margin). The results demonstrate that the learned classifier approaches very close to the generated margin in multiplicative error even when allowed to learn for only a small time horizon. For instance, when $d=10$ and the time horizon is $T = 10^2$, the learned classifier achieves a margin that is, on average, 0.908 times the margin of the generated data.

Furthermore, Table \ref{table:SVMerror} reports the mean and worst-case additive errors ($\hat{\epsilon}$) in the margin achieved by the learned classifier compared to the generated margin. In all cases, we observe that the margin of the learned classifier has an additive error $\hat{\epsilon}$ well within 0.02 of the generated margin. Our experiments thus validate the theoretical results and suggest that convergence to a maximum-margin classifier may occur much faster in practice than the worst-case theoretical guarantee, since to guarantee an additive error of $<0.02$ from the maximum margin the theoretically required time horizon according to equation \eqref{SVM_T} is $T = \Big\lceil \left( \sqrt{\ln 10^3} + \sqrt{2 \ln 2}\right)^2 (0.02)^{-2} \Big\rceil = 144832 >> 10^2$.

For visualization purposes, we showcase in Figures \ref{fig:_SVM_viz_T100}, \ref{fig:_SVM_viz_T1000}, and \ref{fig:_SVM_viz_T10000} how the classifier obtained from running \ref{SCMWU-ball} against \ref{MWU} in the SVM game (over the same 10 instances of $n = 10^3$ data points in $\R^2$) converges closer to the optimal classifier as the time horizon increases from $10^2$, to $10^3$, and finally to $10^4$ respectively.

\begin{table*}[ht]
\centering
\begin{NiceTabular}{|c|c|c|c|c|}
\hline
\RowStyle[bold]{}
    $T \; \backslash \; d$
    & 2
    & 3
    & 5
    & 10
    \\
\hline
    $\mathbf{10^2}$
    & 0.983 (0.929)
    & 0.967 (0.924)
    & 0.954 (0.909)
    & 0.908 (0.859)
    \\
\hline
    $\mathbf{10^3}$
    & 0.993 (0.964)
    & 0.996 (0.977)
    & 0.990 (0.979)
    & 0.980 (0.963)
    \\
\hline
\end{NiceTabular}
\caption{Mean ratio (and worst-case ratio in parentheses) of learnt classifier's margin to generated margin. Classifier learnt by running \ref{SCMWU-ball} against \ref{MWU} in the SVM game on $n=10^3$ data points in $\R^d$, for different dimensions $d$ and different time horizons $T$ (100 instances each).}
\label{table:SVMratio}
\end{table*}

\begin{table*}[ht]
\centering
\begin{NiceTabular}{|c|c|c|c|c|}
\hline
\RowStyle[bold]{}
    $T \; \backslash \; d$
    & 2
    & 3
    & 5
    & 10
    \\
\hline
    $\mathbf{10^2}$
    & $1.73 \scnexp{3}$ ($7.13 \scnexp{3}$)
    & $3.32 \scnexp{3}$ ($7.62 \scnexp{3}$)
    & $4.64 \scnexp{3}$ ($9.06 \scnexp{3}$)
    & $9.23 \scnexp{3}$ ($1.41 \scnexp{2}$)
    \\
\hline
    $\mathbf{10^3}$
    & $7.37 \scnexp{4}$ ($3.58 \scnexp{3}$)
    & $4.29 \scnexp{4}$ ($2.31 \scnexp{3}$)
    & $1.01 \scnexp{3}$ ($2.06 \scnexp{3}$)
    & $1.99 \scnexp{3}$ ($3.69 \scnexp{3}$)
    \\
\hline
\end{NiceTabular}
\caption{Mean additive error $\hat{\epsilon}$ (and worst-case error in parentheses) of learnt classifier's margin from generated margin. Classifier learnt by running \ref{SCMWU-ball} against \ref{MWU} in the SVM game on $n=10^3$ data points in $\R^d$, for different dimensions $d$ and different time horizons $T$ (100 instances each).}
\label{table:SVMerror}
\end{table*}

\begin{figure}[H]
\centering
    \begin{subfigure}{.19\linewidth}
        \centering
        \includegraphics[width=\linewidth]{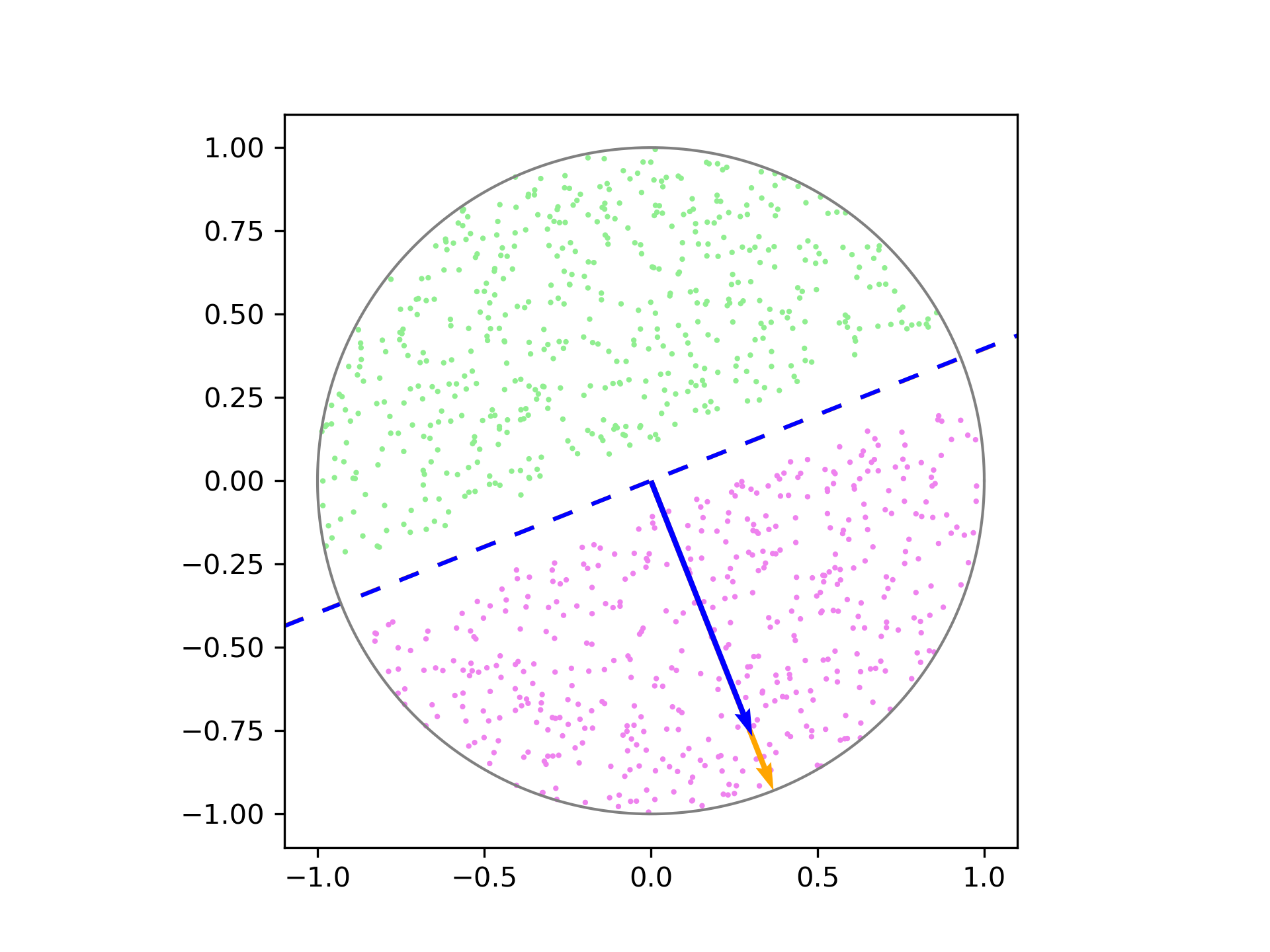}
    \end{subfigure}
    \begin{subfigure}{.19\linewidth}
        \centering
        \includegraphics[width=\linewidth]{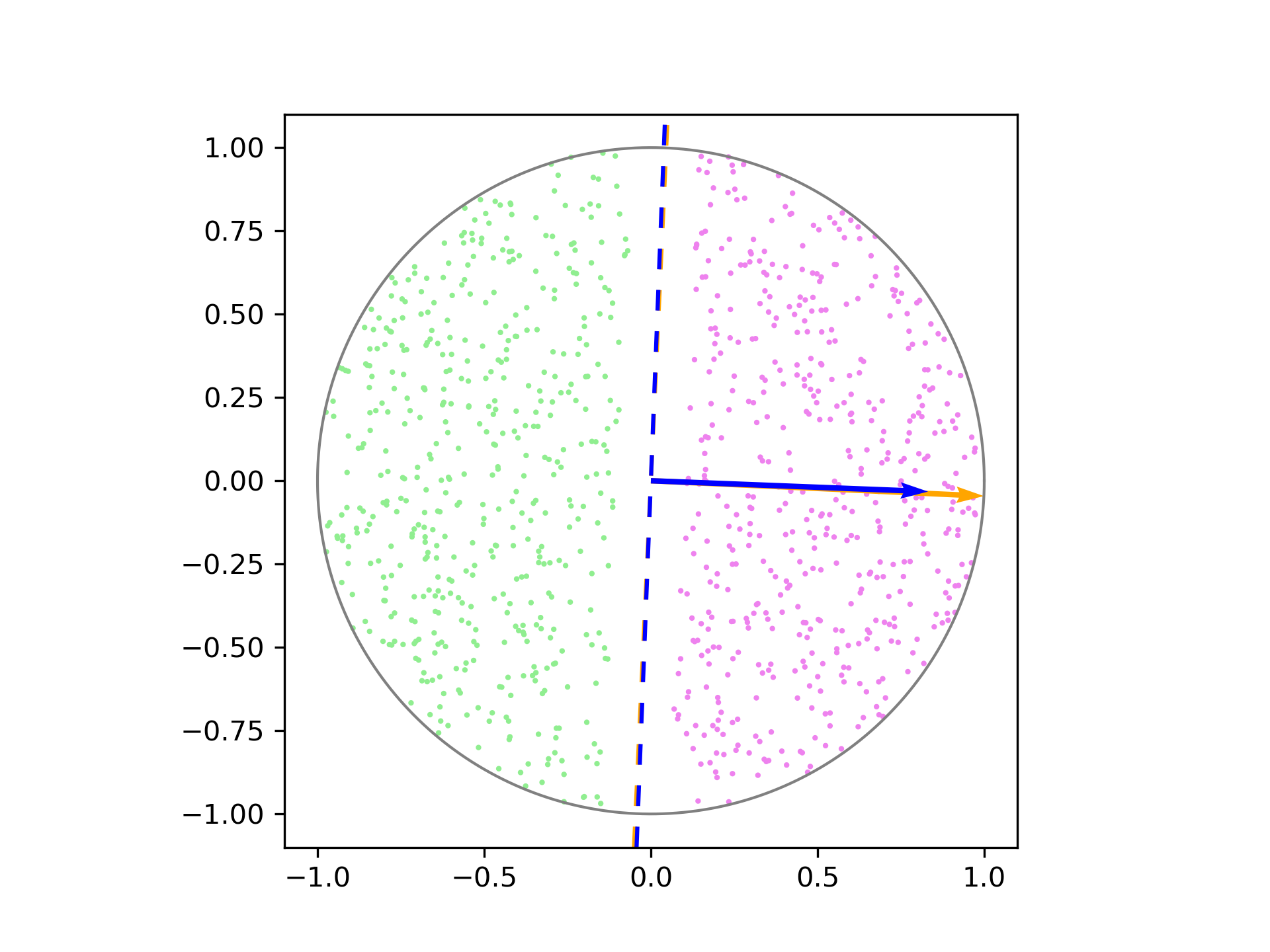}
    \end{subfigure}
    \begin{subfigure}{.19\linewidth}
        \centering
        \includegraphics[width=\linewidth]{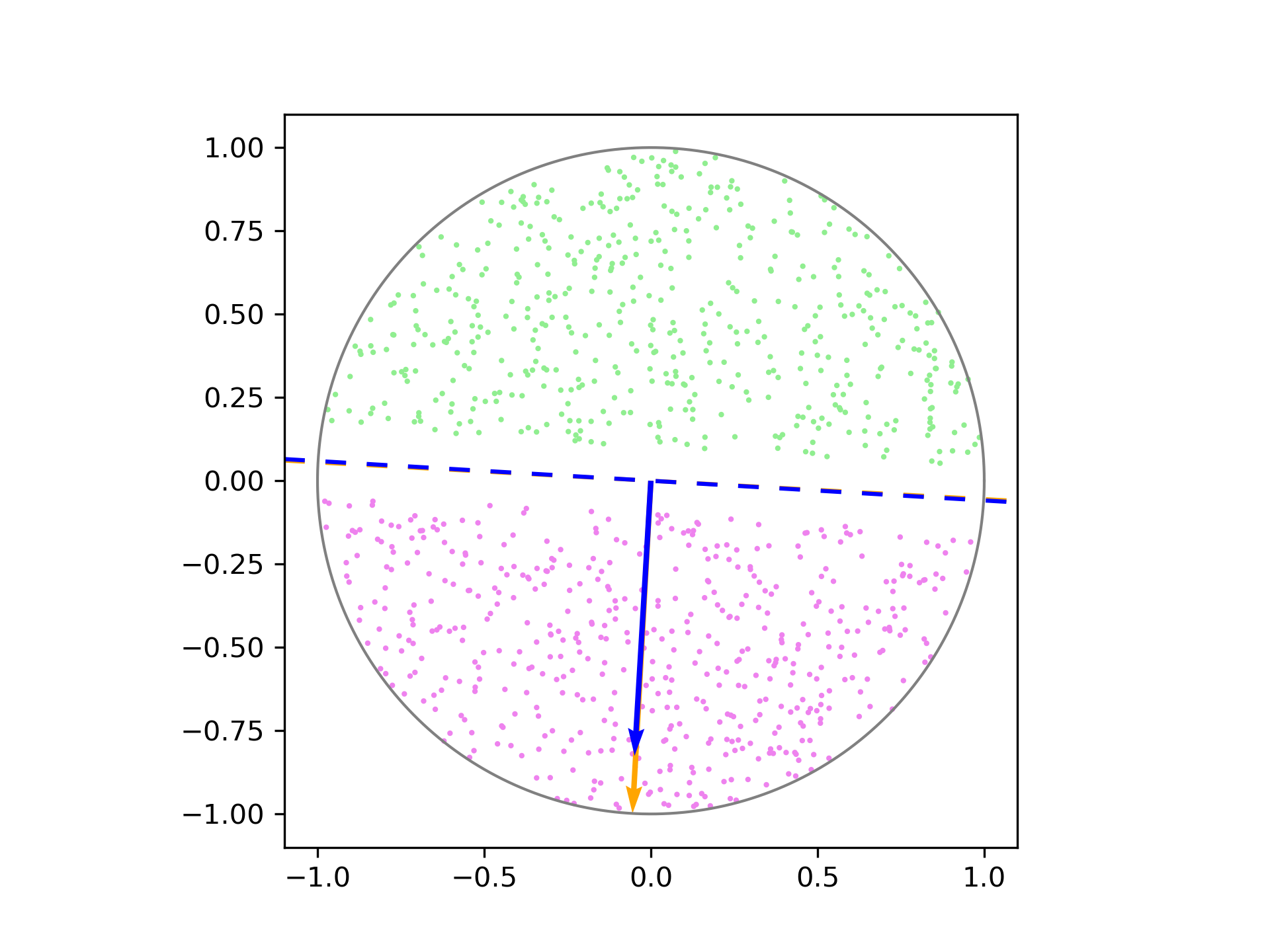}
    \end{subfigure}
    \begin{subfigure}{.19\linewidth}
        \centering
        \includegraphics[width=\linewidth]{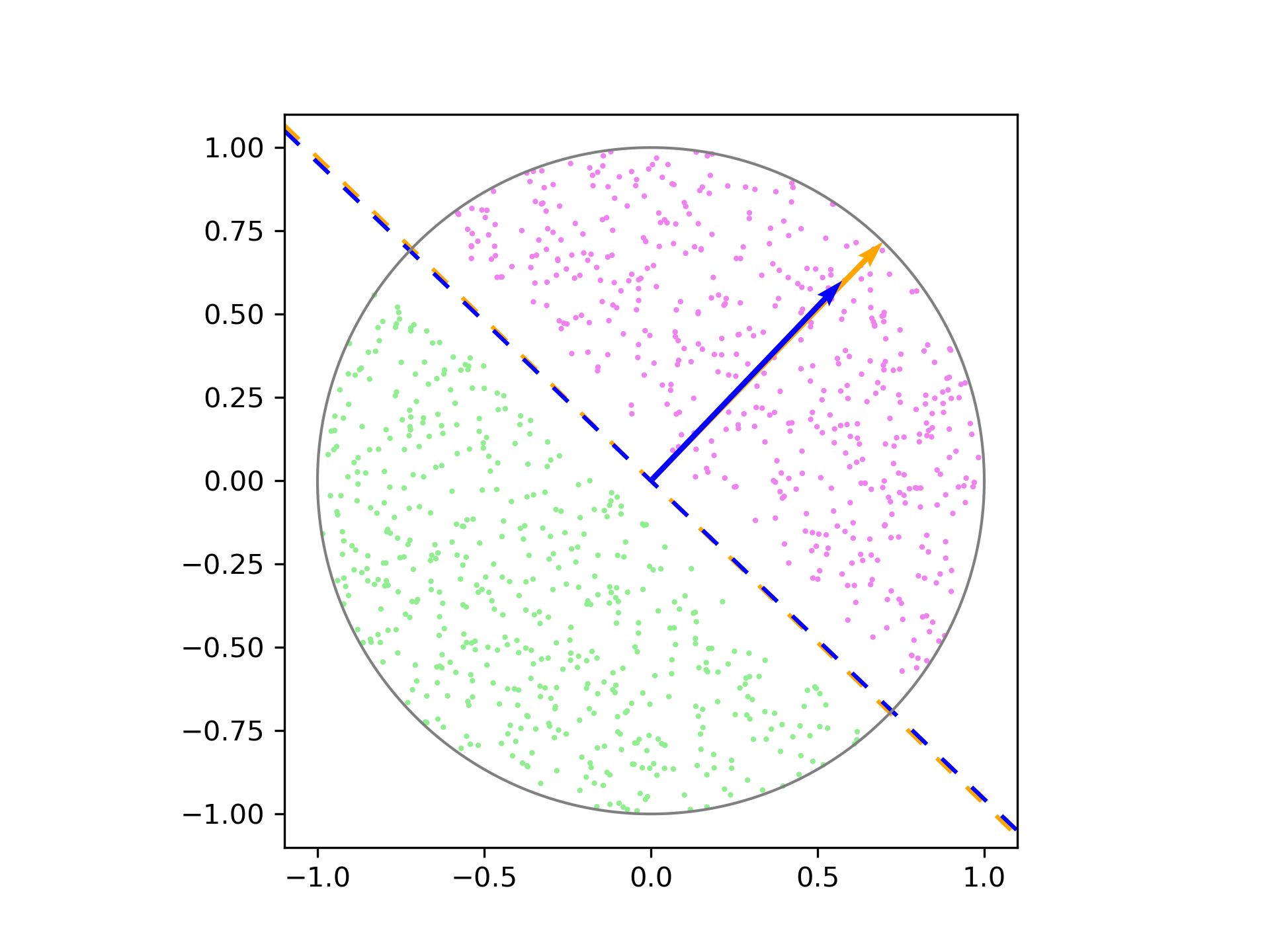}
    \end{subfigure}
    \begin{subfigure}{.19\linewidth}
        \centering
        \includegraphics[width=\linewidth]{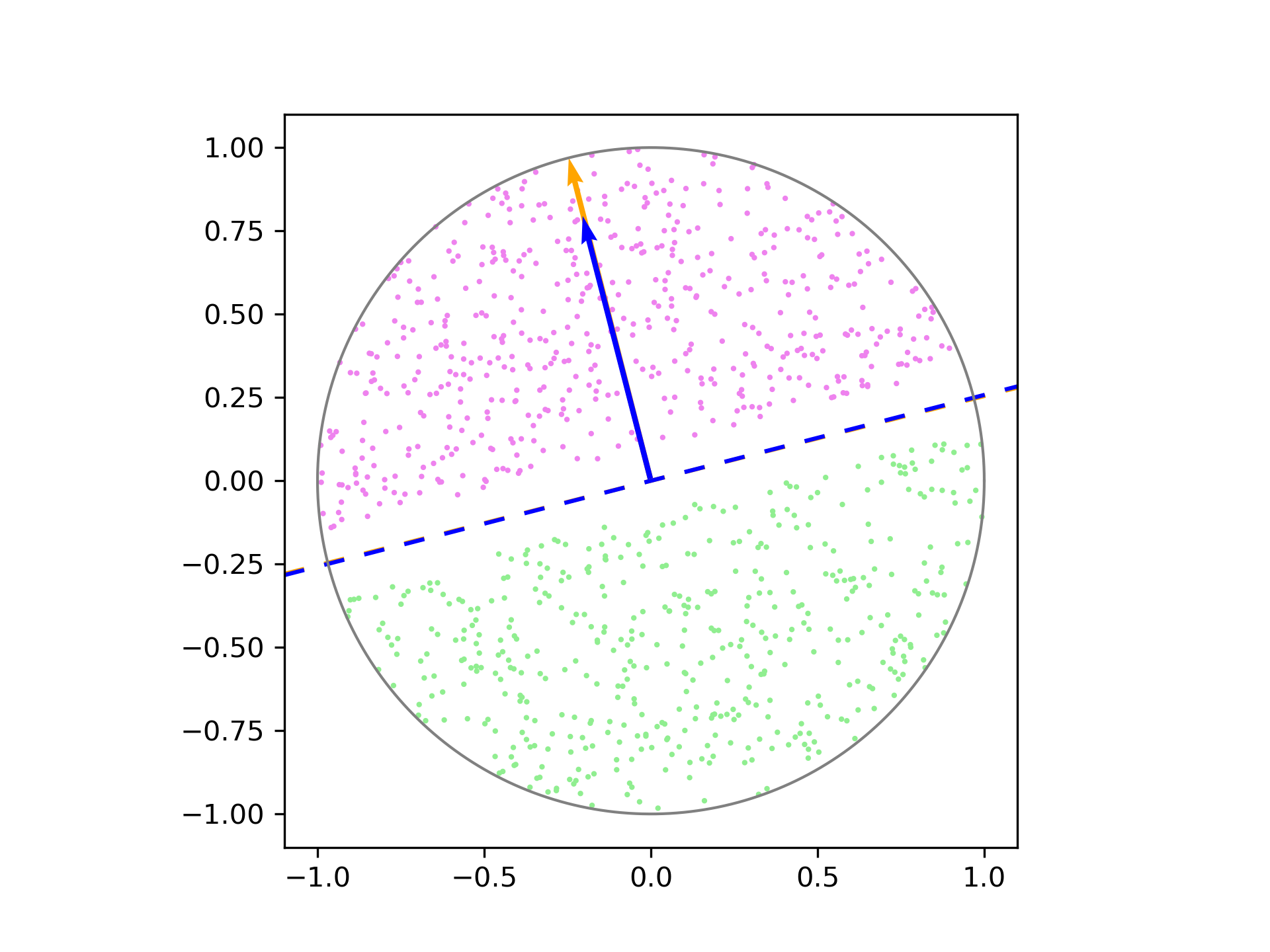}
    \end{subfigure}
    
    \begin{subfigure}{.19\linewidth}
        \centering
        \includegraphics[width=\linewidth]{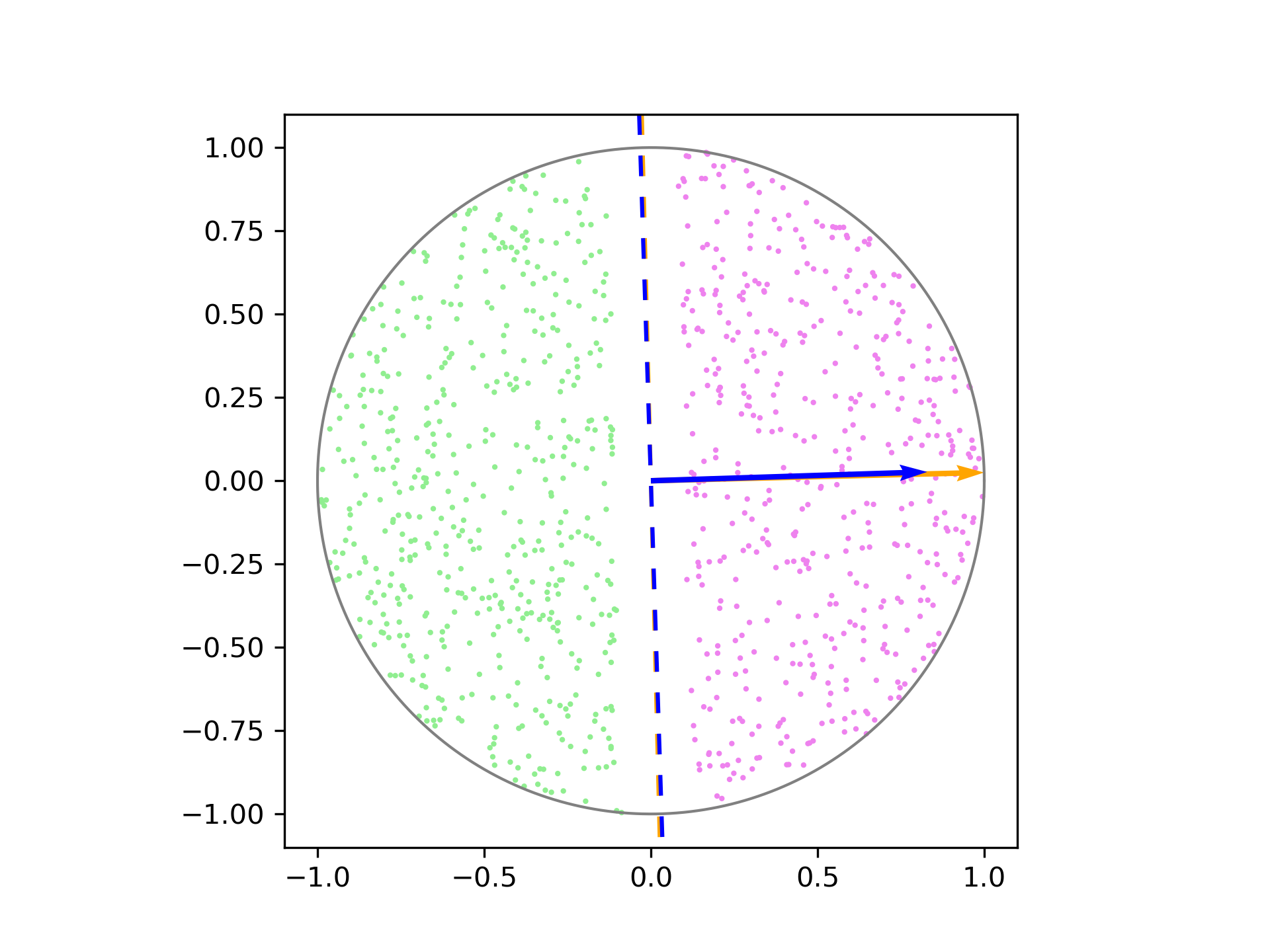}
    \end{subfigure}
    \begin{subfigure}{.19\linewidth}
        \centering
        \includegraphics[width=\linewidth]{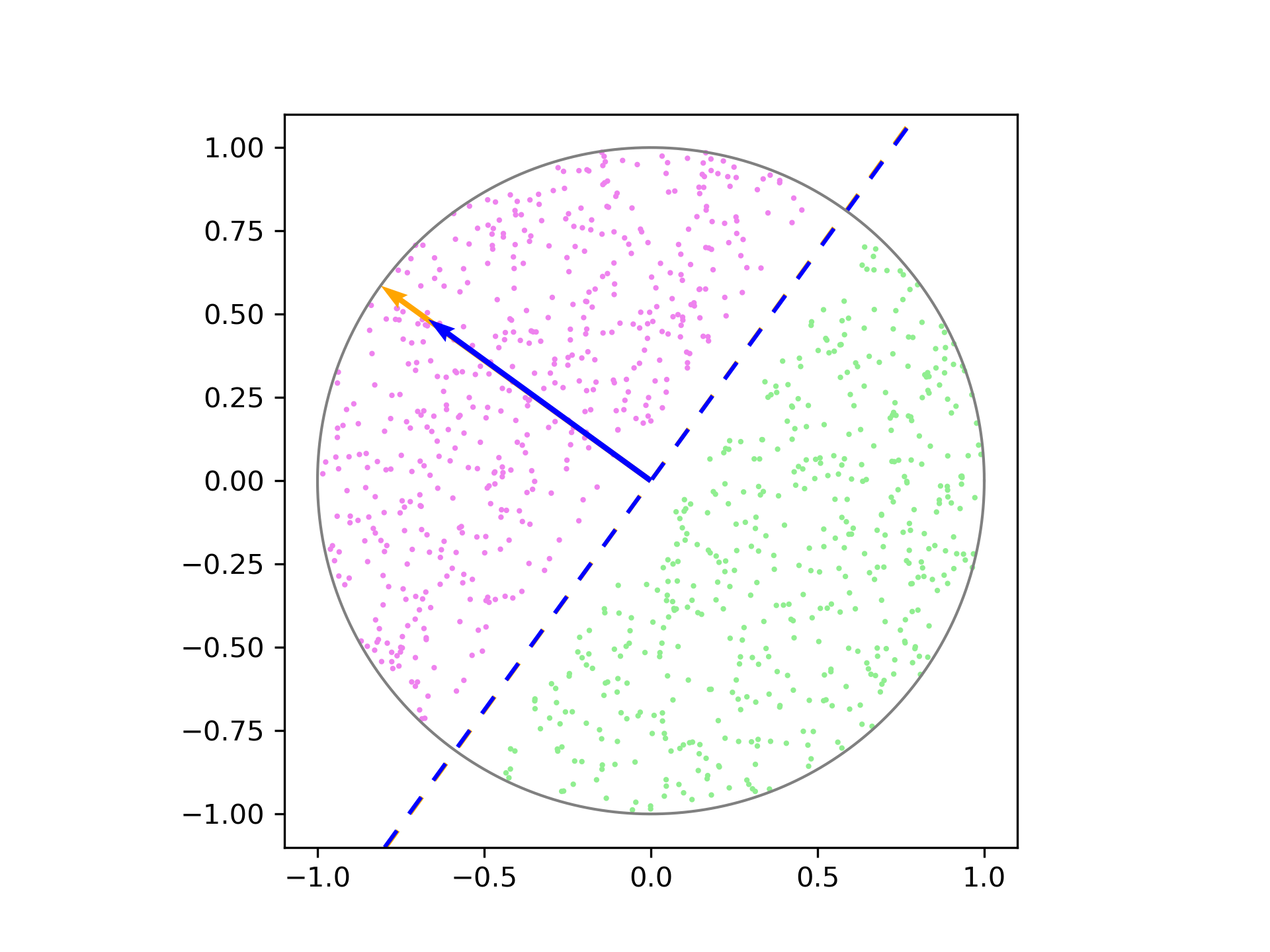}
    \end{subfigure}
    \begin{subfigure}{.19\linewidth}
        \centering
        \includegraphics[width=\linewidth]{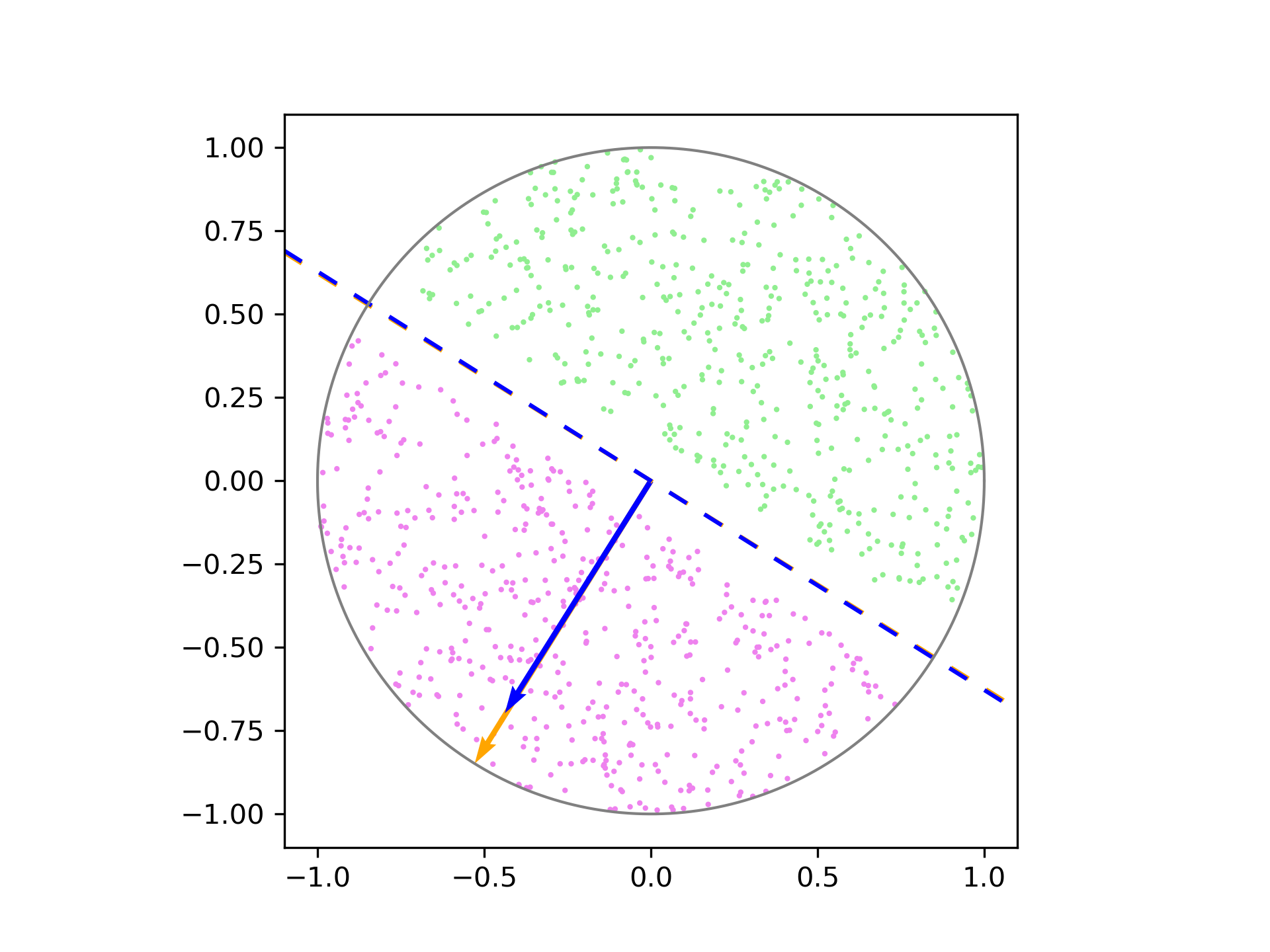}
    \end{subfigure}
    \begin{subfigure}{.19\linewidth}
        \centering
        \includegraphics[width=\linewidth]{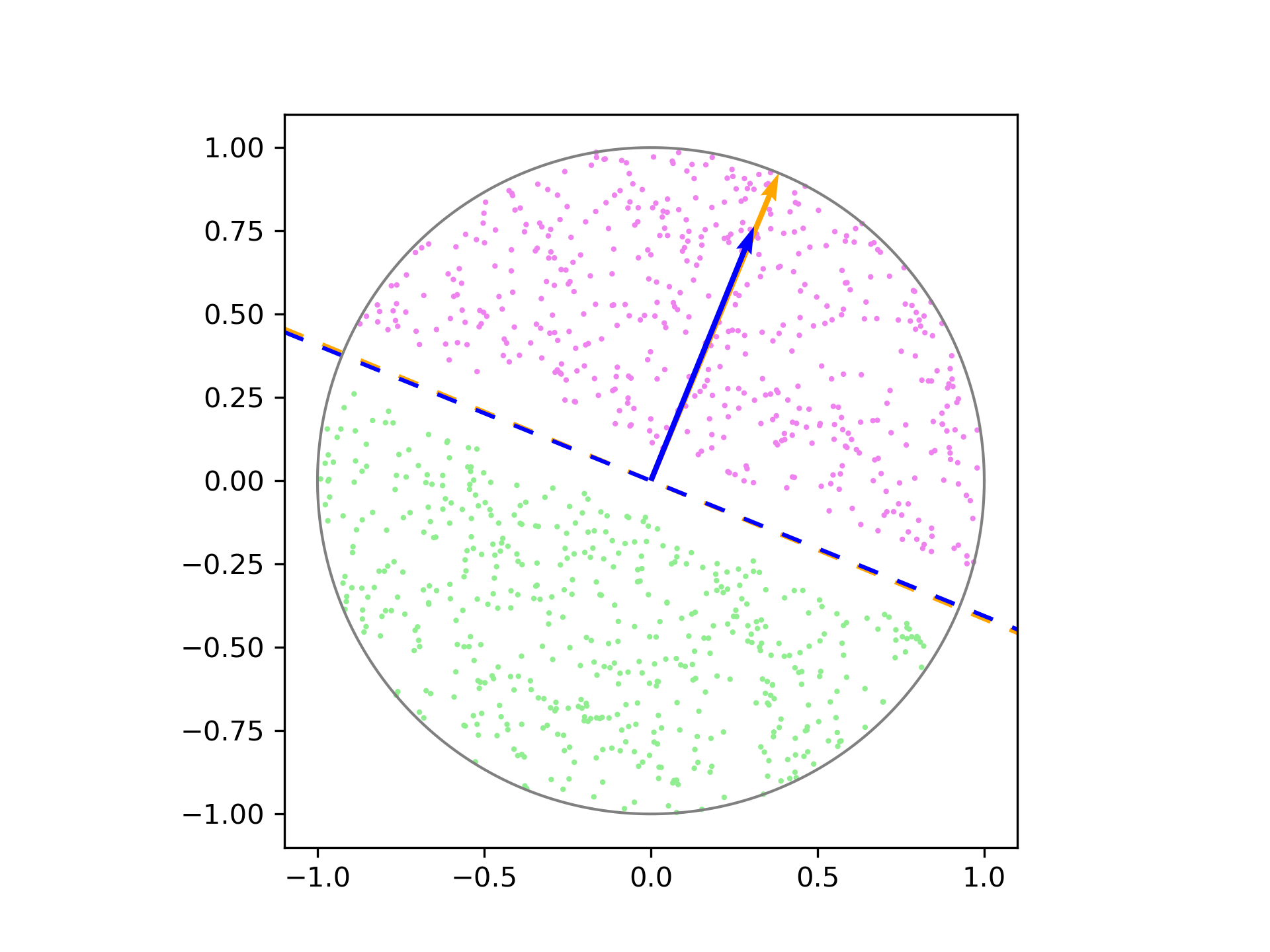}
    \end{subfigure}
    \begin{subfigure}{.19\linewidth}
        \centering
        \includegraphics[width=\linewidth]{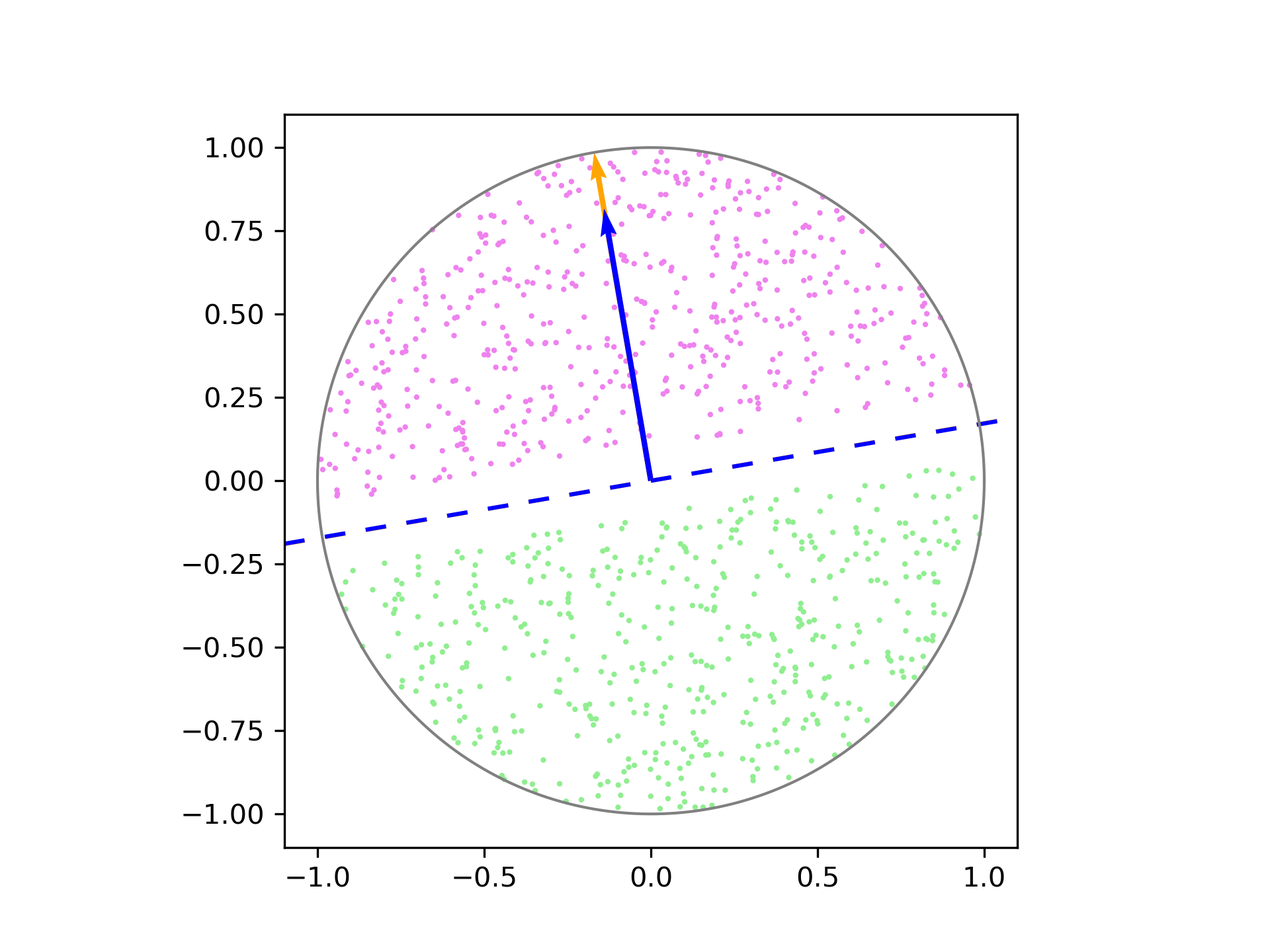}
    \end{subfigure}
    \caption{
    	Classifiers achieved by running \ref{SCMWU-ball} against \ref{MWU} in the SVM game (on $n = 10^3$ data points in $\R^2$) for time horizon $T = 10^2$. ({\textcolor{blue} {Blue}}: classifier $\overline{x}$ achieved by running \ref{SCMWU-ball} vs. \ref{MWU} in SVM game. {\textcolor{orange} {Orange}}: classifier $w$ used to generate data.)
	} 
    \label{fig:_SVM_viz_T100}
\end{figure}

\begin{figure}[H]
\centering
    \begin{subfigure}{.19\linewidth}
        \centering
        \includegraphics[width=\linewidth]{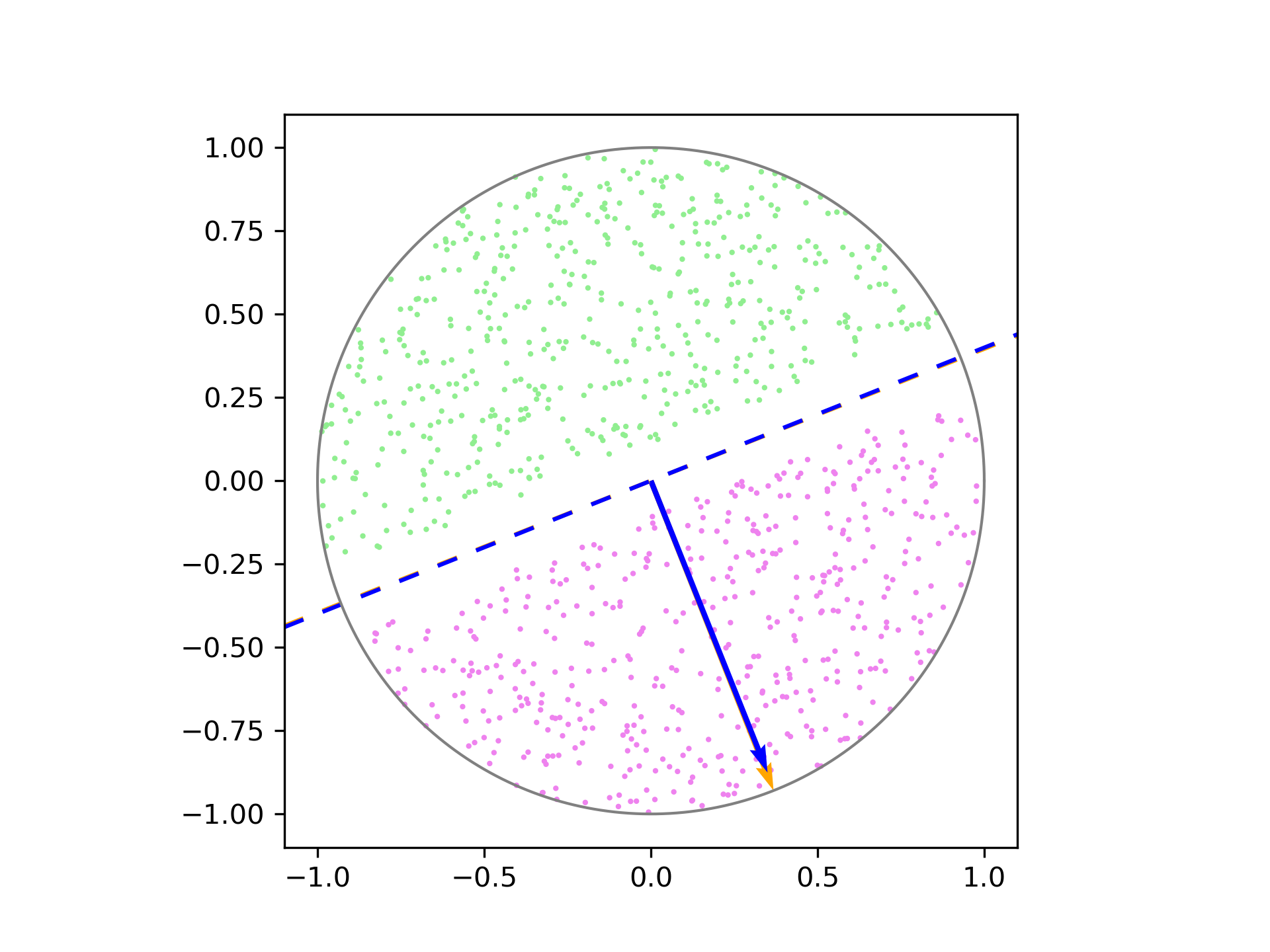}
    \end{subfigure}
    \begin{subfigure}{.19\linewidth}
        \centering
        \includegraphics[width=\linewidth]{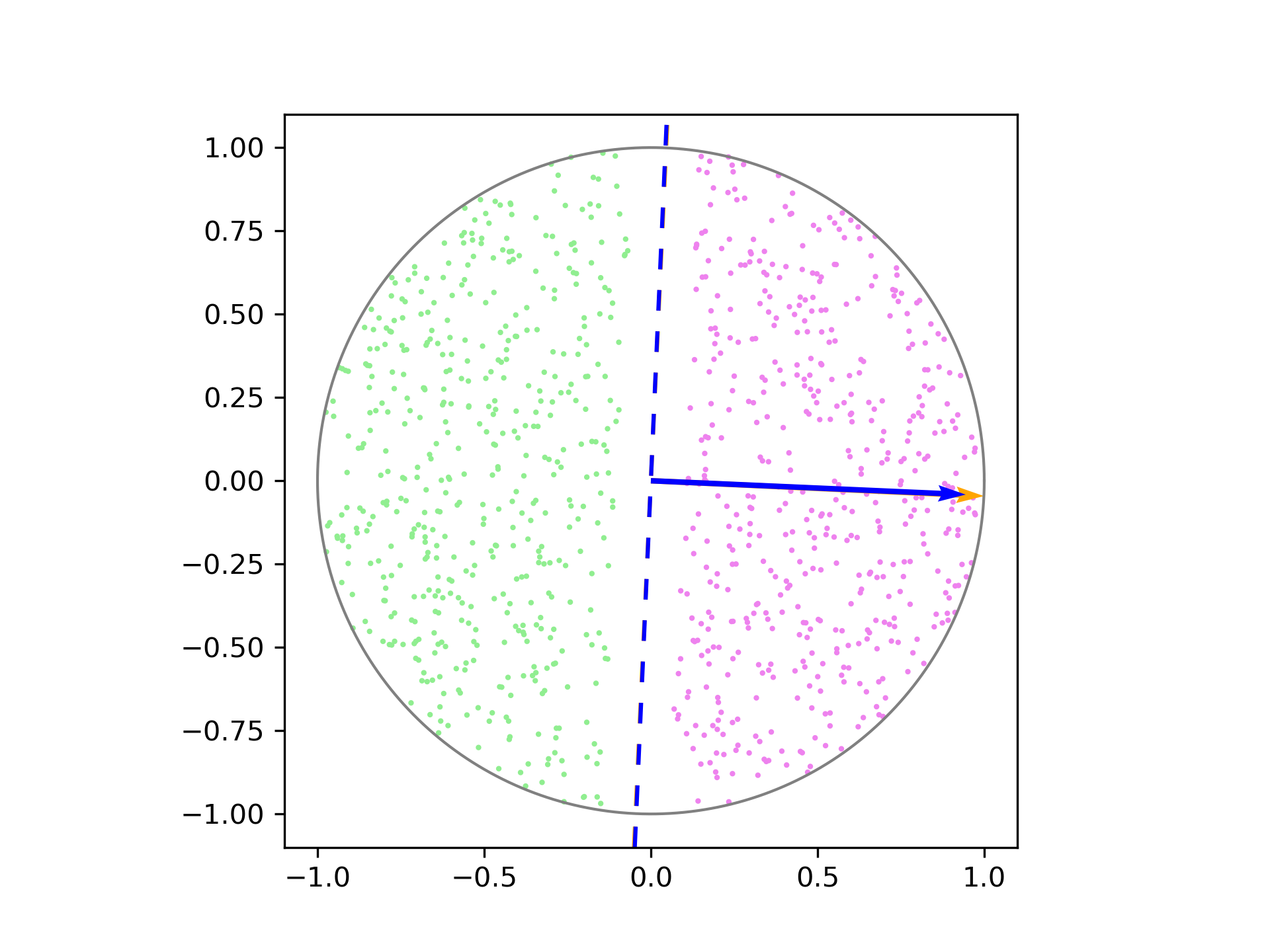}
    \end{subfigure}
    \begin{subfigure}{.19\linewidth}
        \centering
        \includegraphics[width=\linewidth]{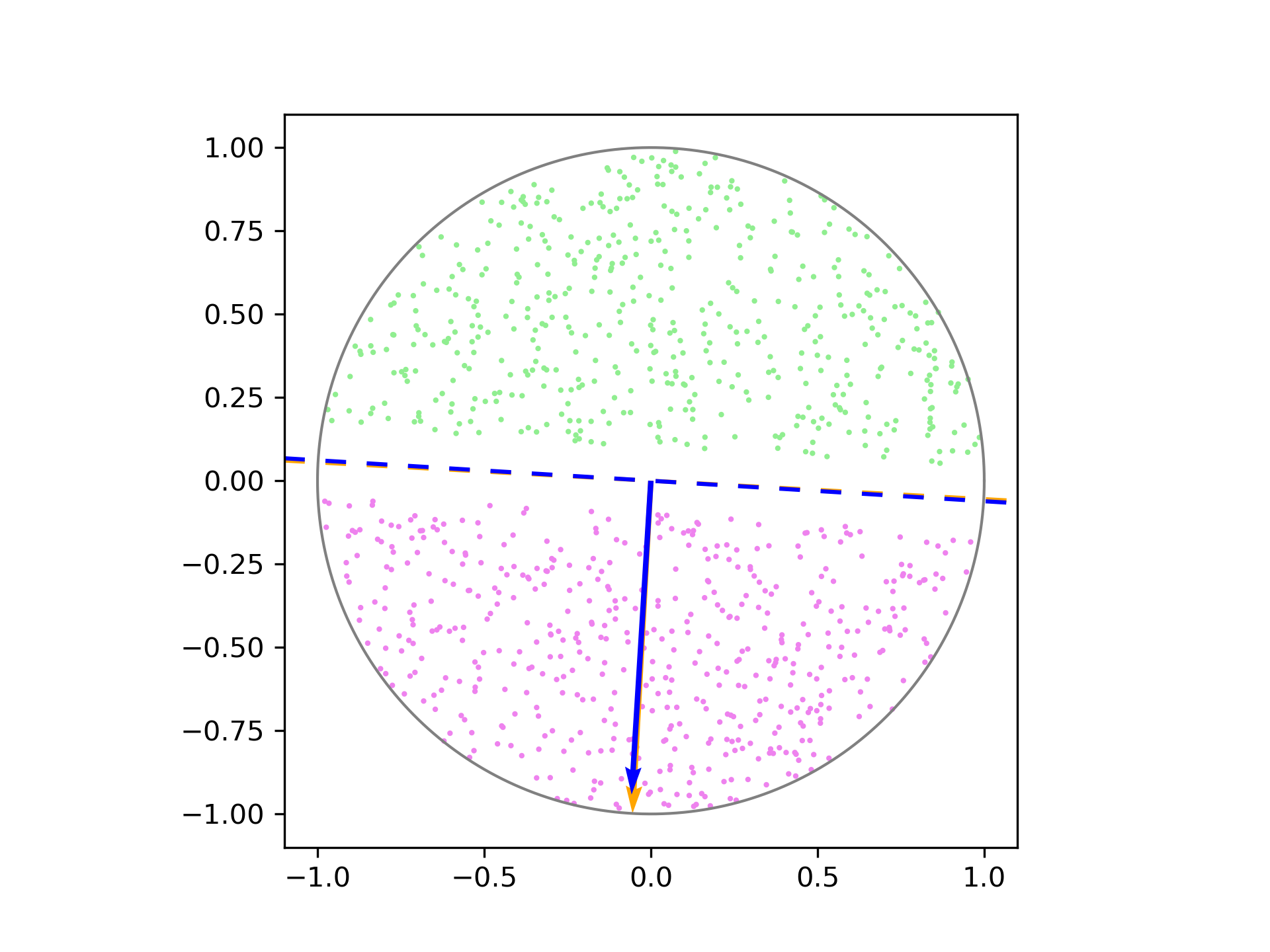}
    \end{subfigure}
    \begin{subfigure}{.19\linewidth}
        \centering
        \includegraphics[width=\linewidth]{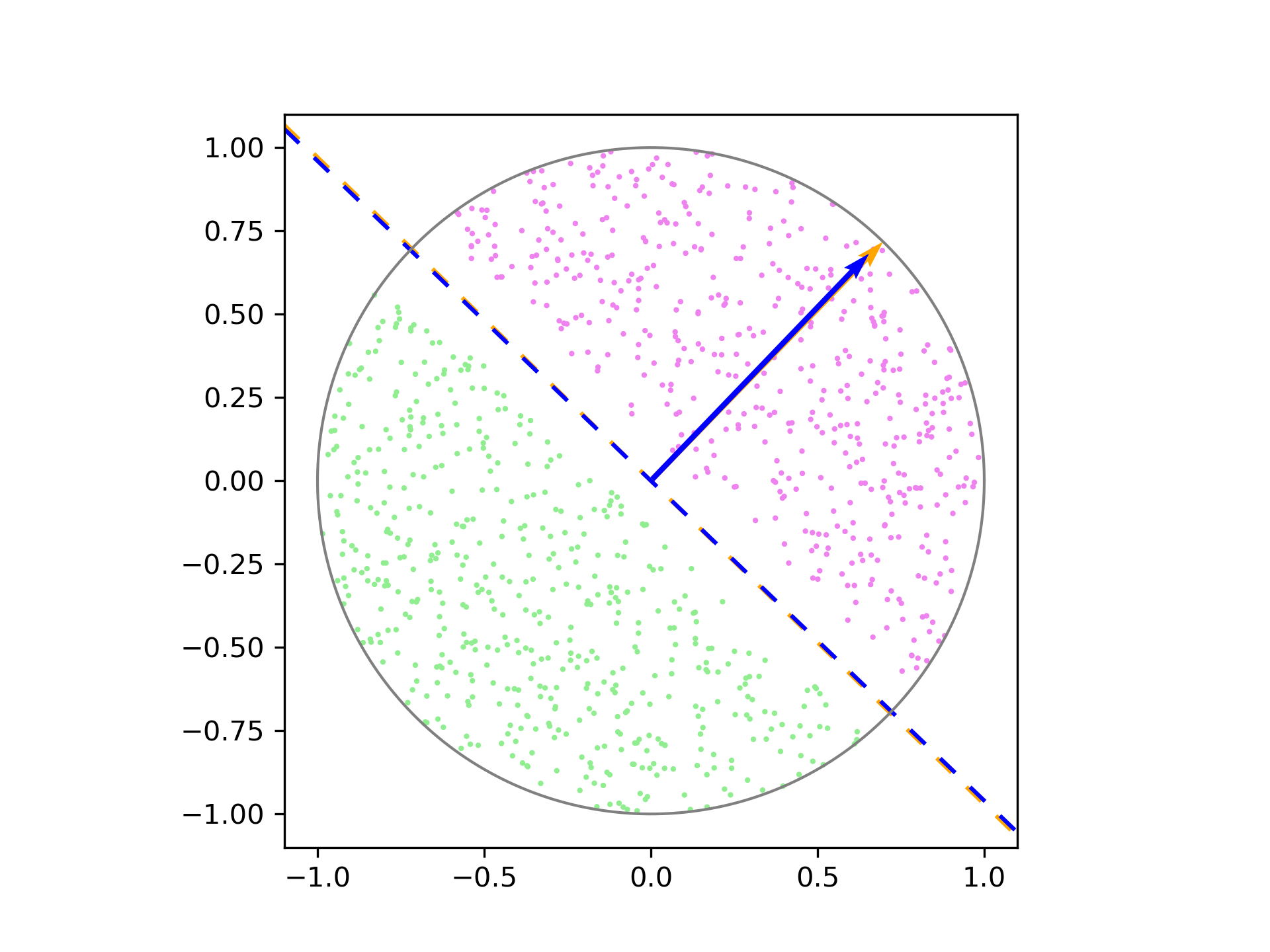}
    \end{subfigure}
    \begin{subfigure}{.19\linewidth}
        \centering
        \includegraphics[width=\linewidth]{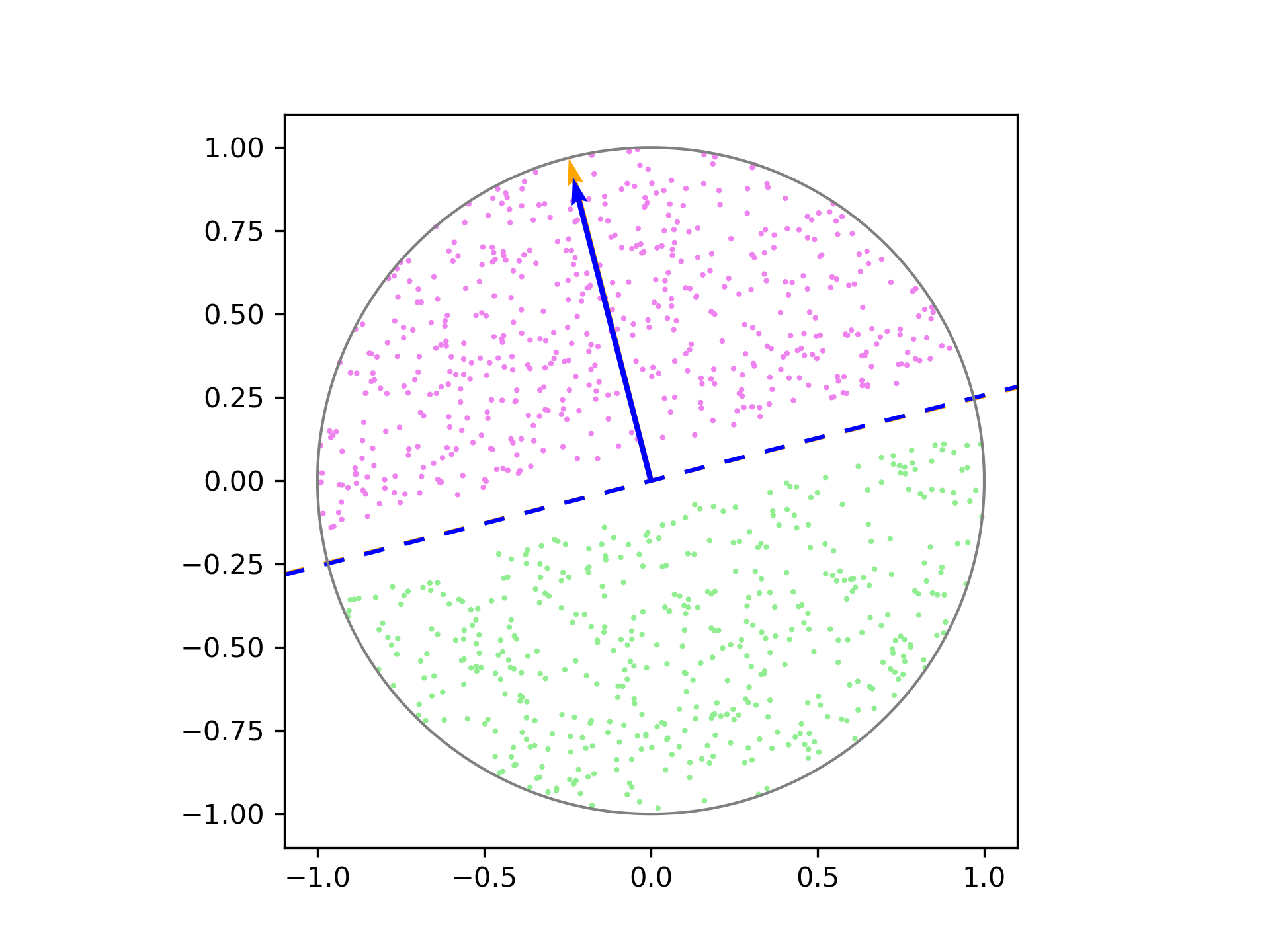}
    \end{subfigure}
    
    \begin{subfigure}{.19\linewidth}
        \centering
        \includegraphics[width=\linewidth]{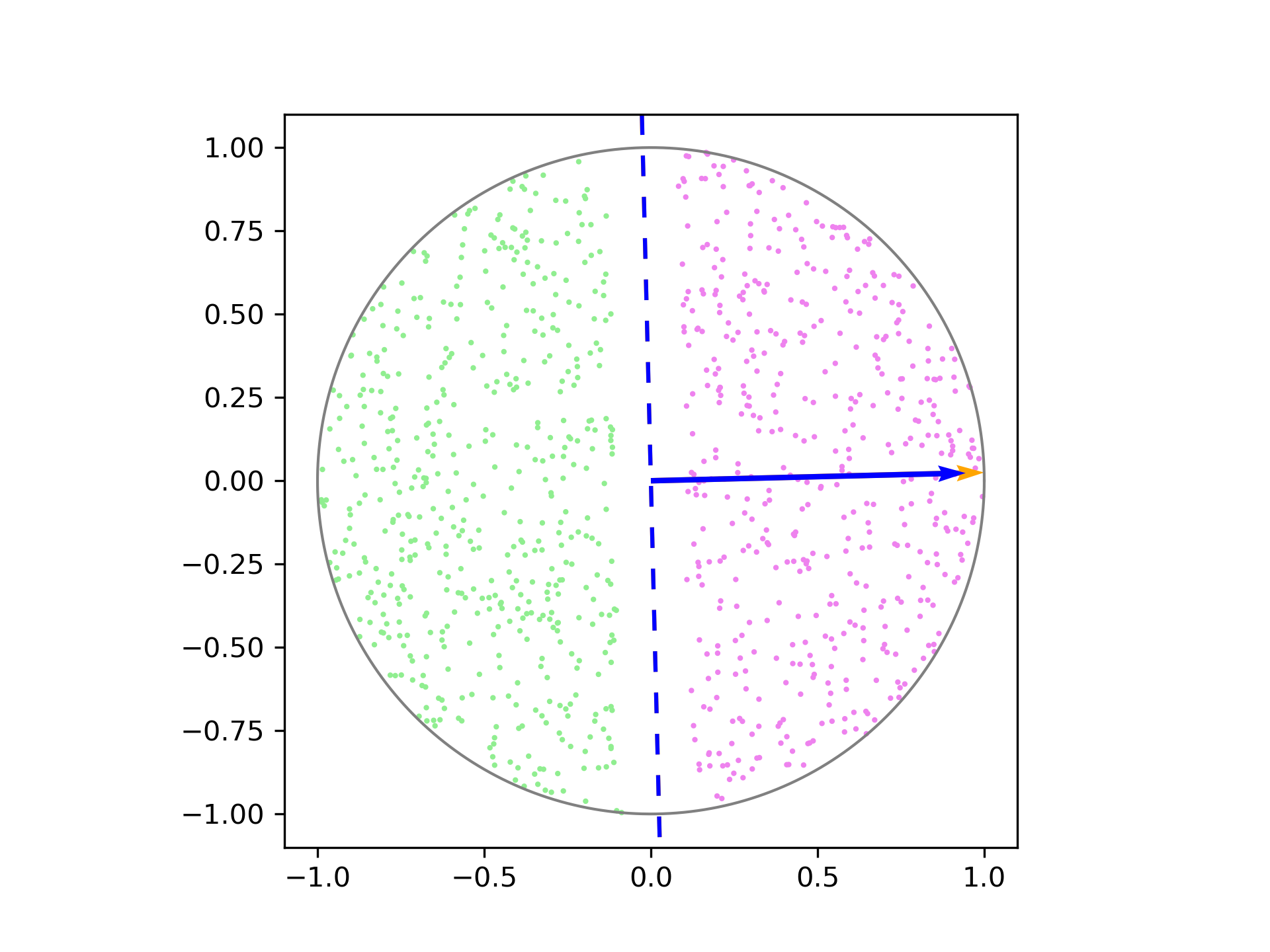}
    \end{subfigure}
    \begin{subfigure}{.19\linewidth}
        \centering
        \includegraphics[width=\linewidth]{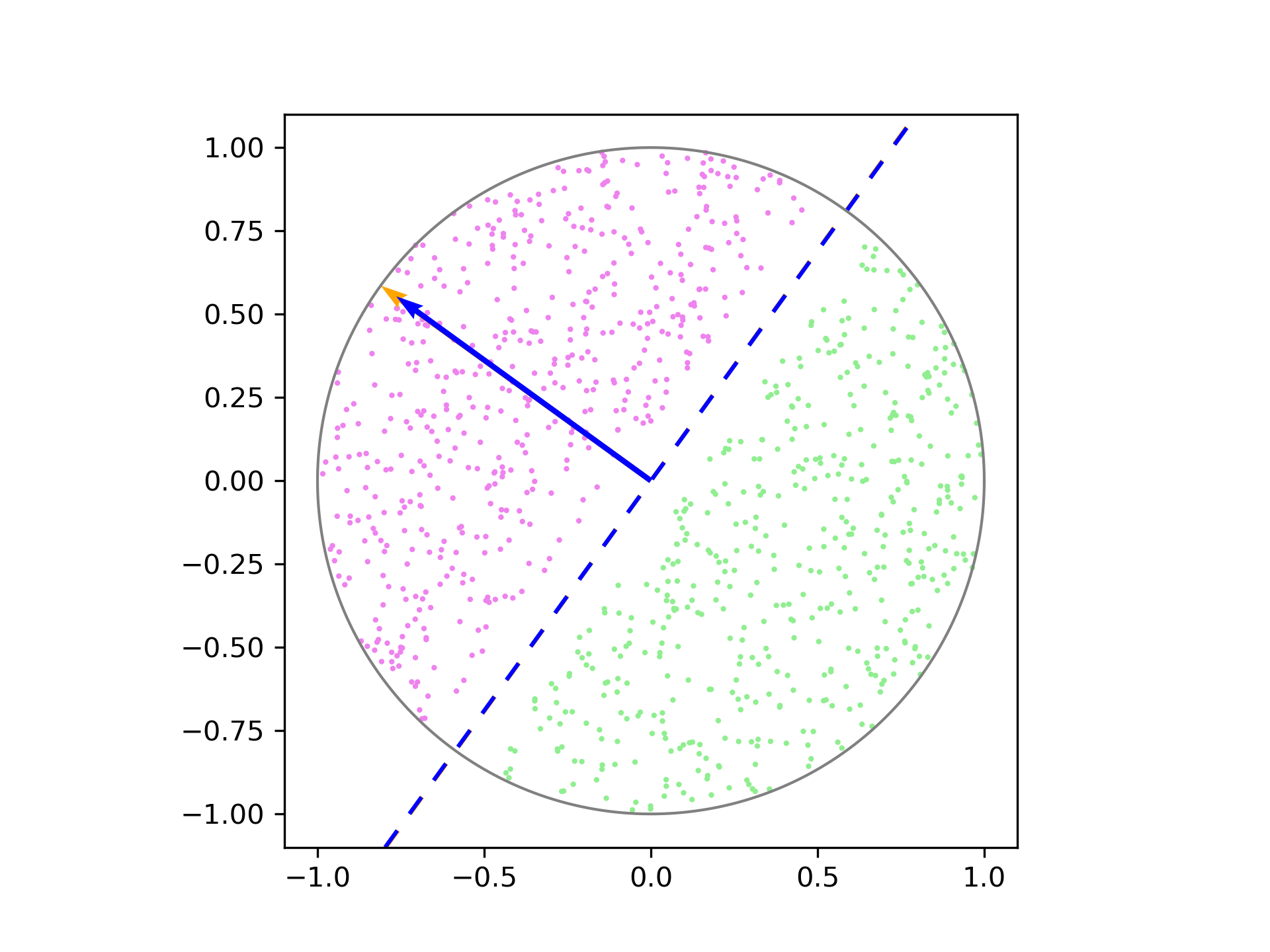}
    \end{subfigure}
    \begin{subfigure}{.19\linewidth}
        \centering
        \includegraphics[width=\linewidth]{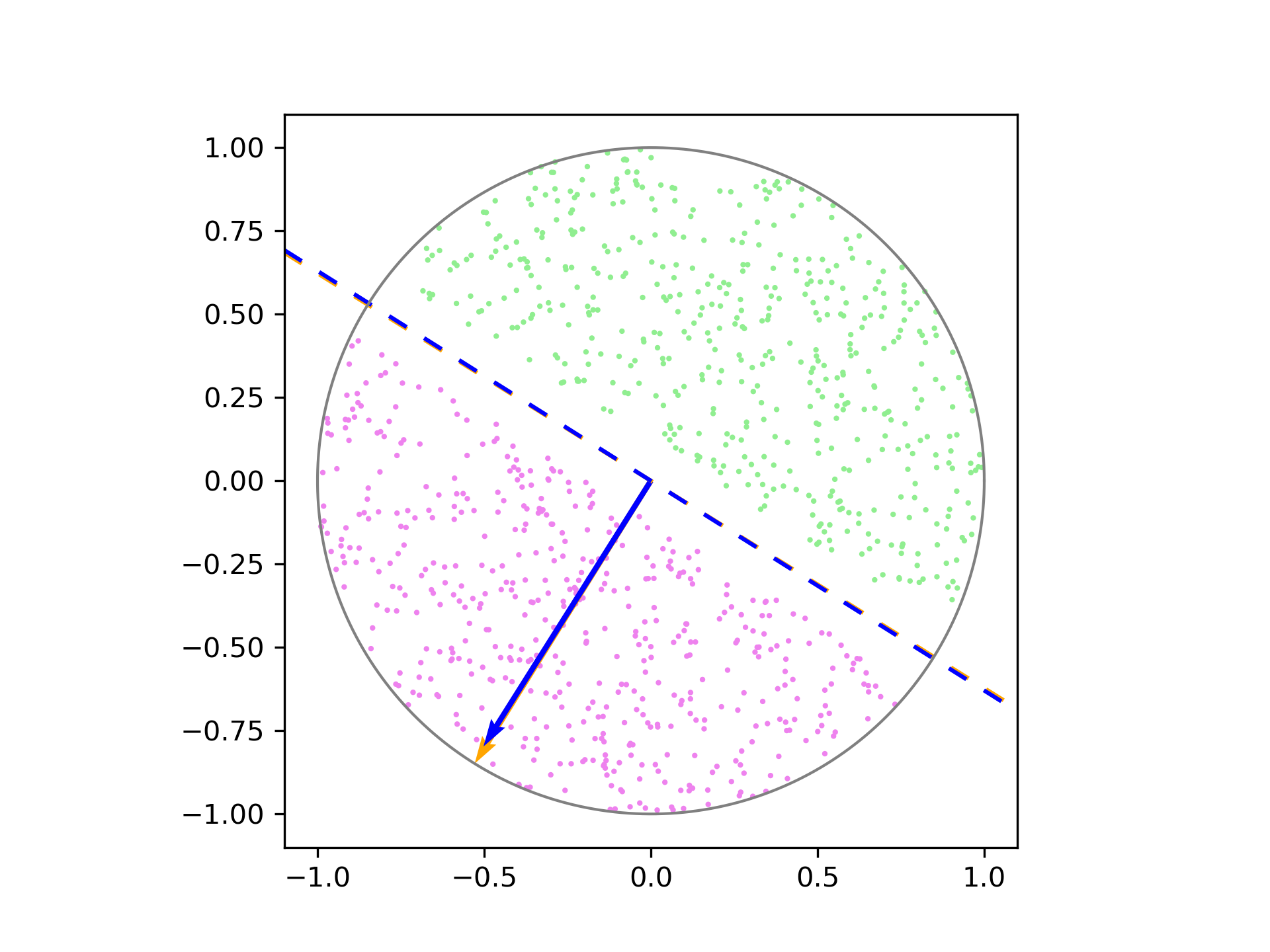}
    \end{subfigure}
    \begin{subfigure}{.19\linewidth}
        \centering
        \includegraphics[width=\linewidth]{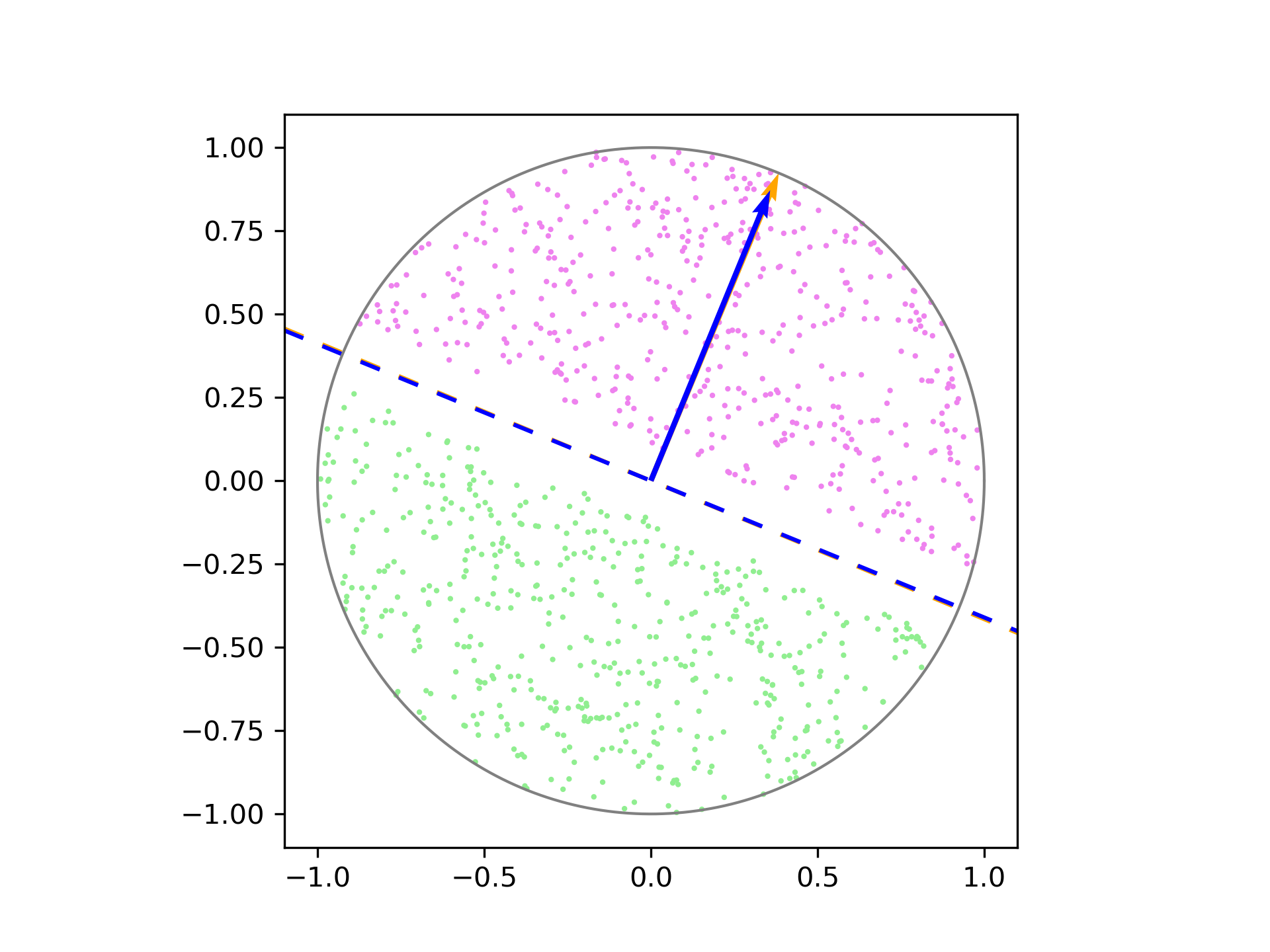}
    \end{subfigure}
    \begin{subfigure}{.19\linewidth}
        \centering
        \includegraphics[width=\linewidth]{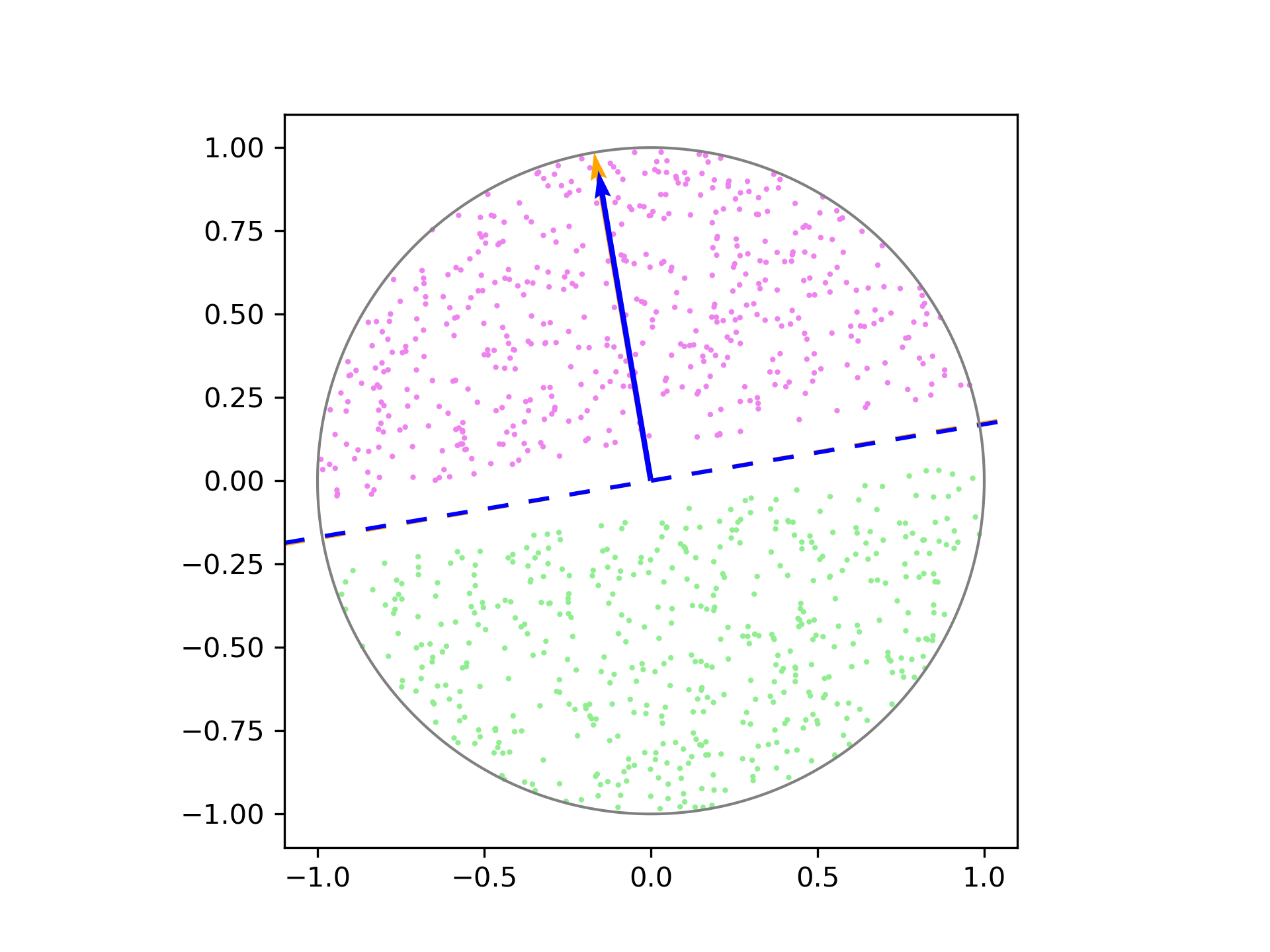}
    \end{subfigure}
    \caption{
    	Classifiers achieved by running \ref{SCMWU-ball} against \ref{MWU} in the SVM game (on $n = 10^3$ data points in $\R^2$) for time horizon $T = 10^3$. ({\textcolor{blue} {Blue}}: classifier $\overline{x}$ achieved by running \ref{SCMWU-ball} vs. \ref{MWU} in SVM game. {\textcolor{orange} {Orange}}: classifier $w$ used to generate data.)
	} 
    \label{fig:_SVM_viz_T1000}
\end{figure}

\begin{figure}[H]
\centering
    \begin{subfigure}{.19\linewidth}
        \centering
        \includegraphics[width=\linewidth]{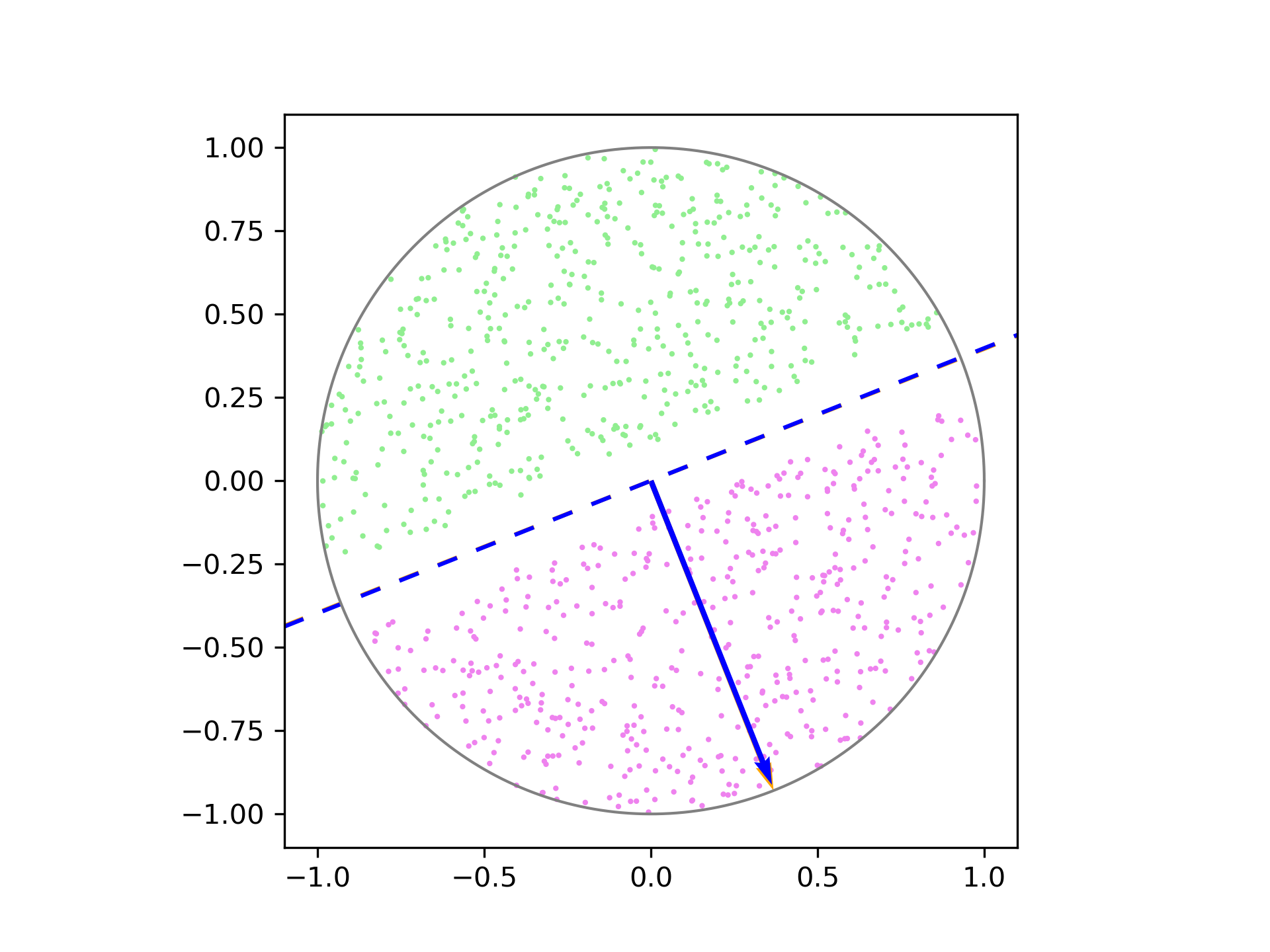}
    \end{subfigure}
    \begin{subfigure}{.19\linewidth}
        \centering
        \includegraphics[width=\linewidth]{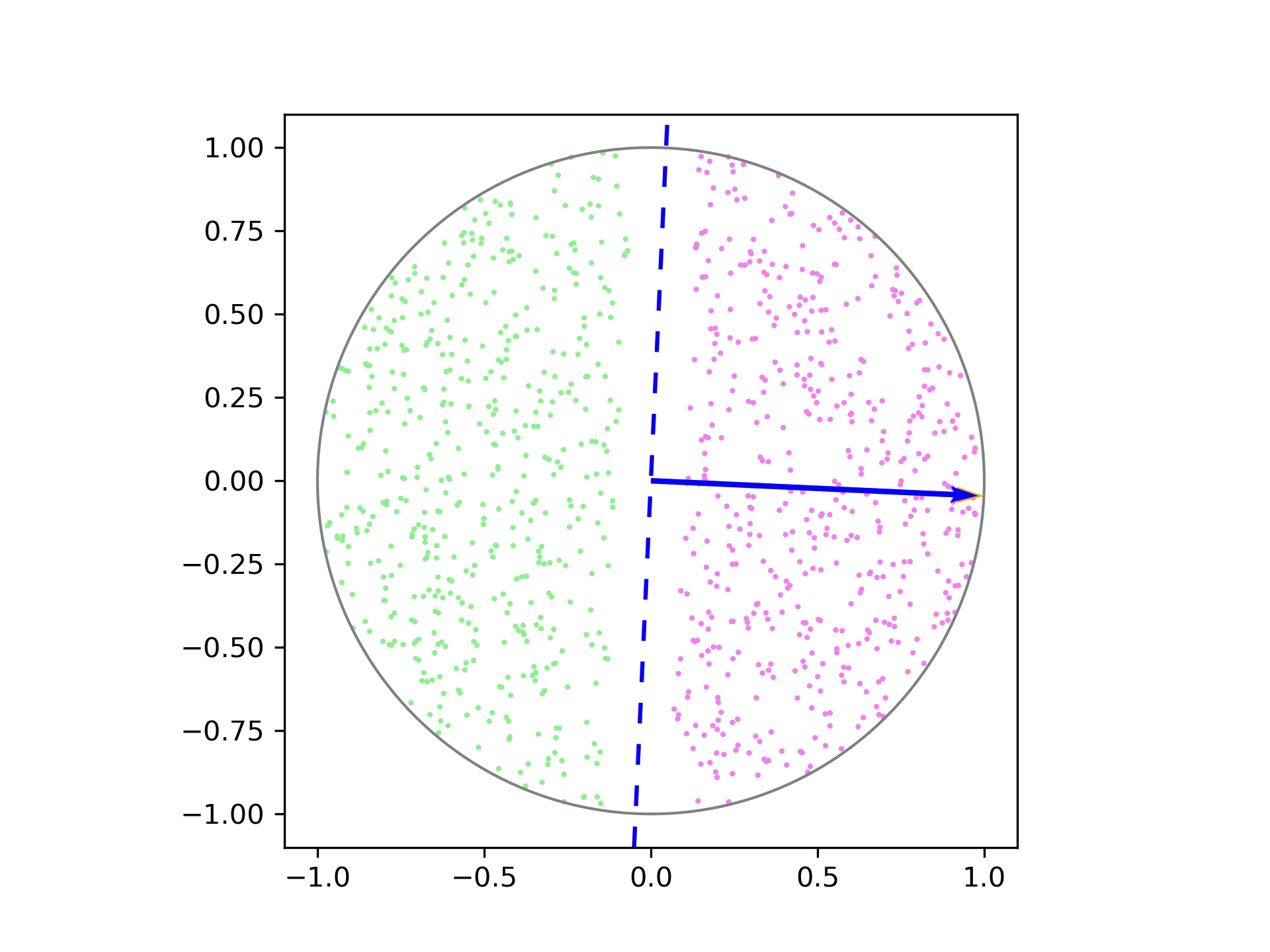}
    \end{subfigure}
    \begin{subfigure}{.19\linewidth}
        \centering
        \includegraphics[width=\linewidth]{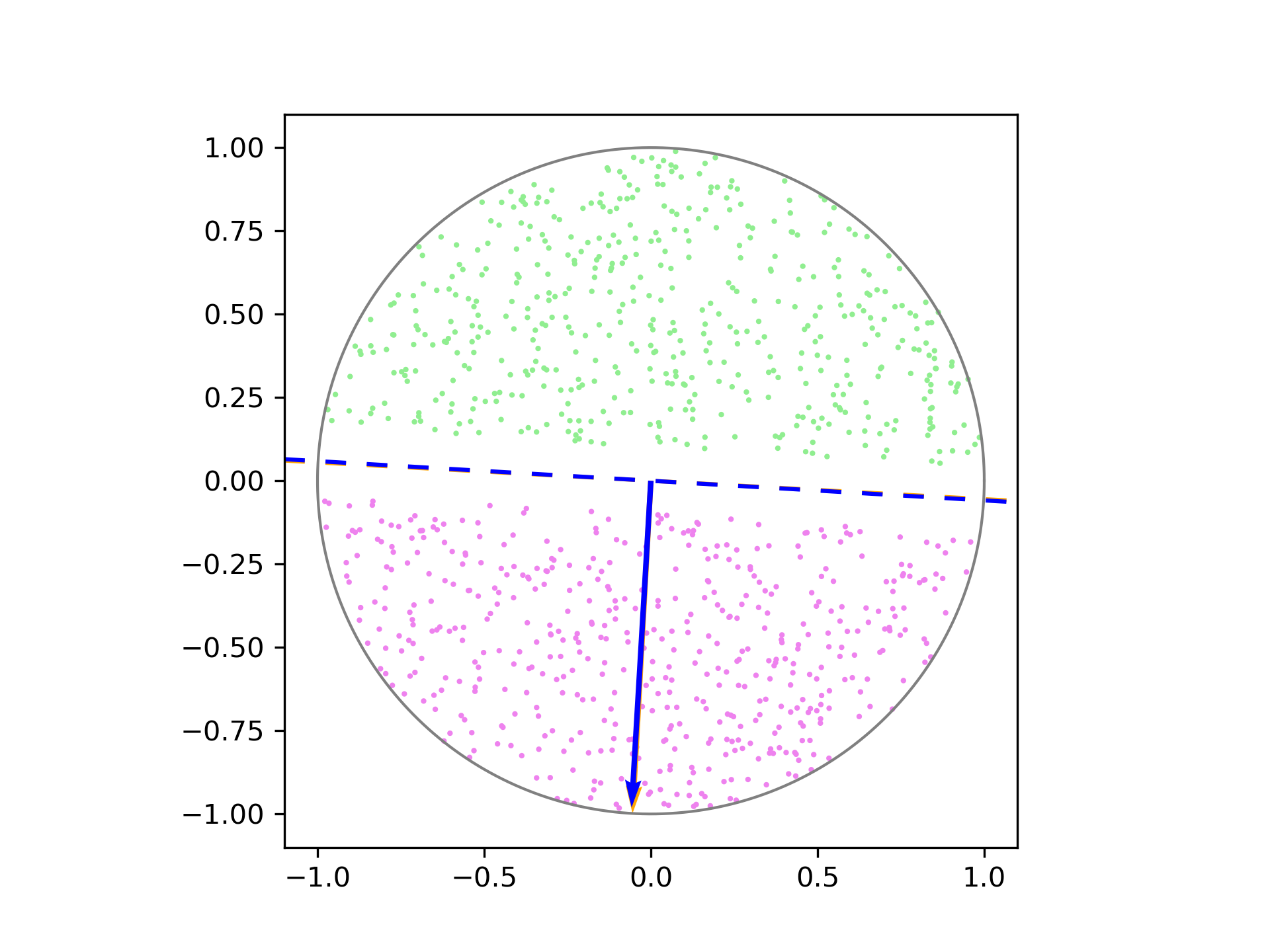}
    \end{subfigure}
    \begin{subfigure}{.19\linewidth}
        \centering
        \includegraphics[width=\linewidth]{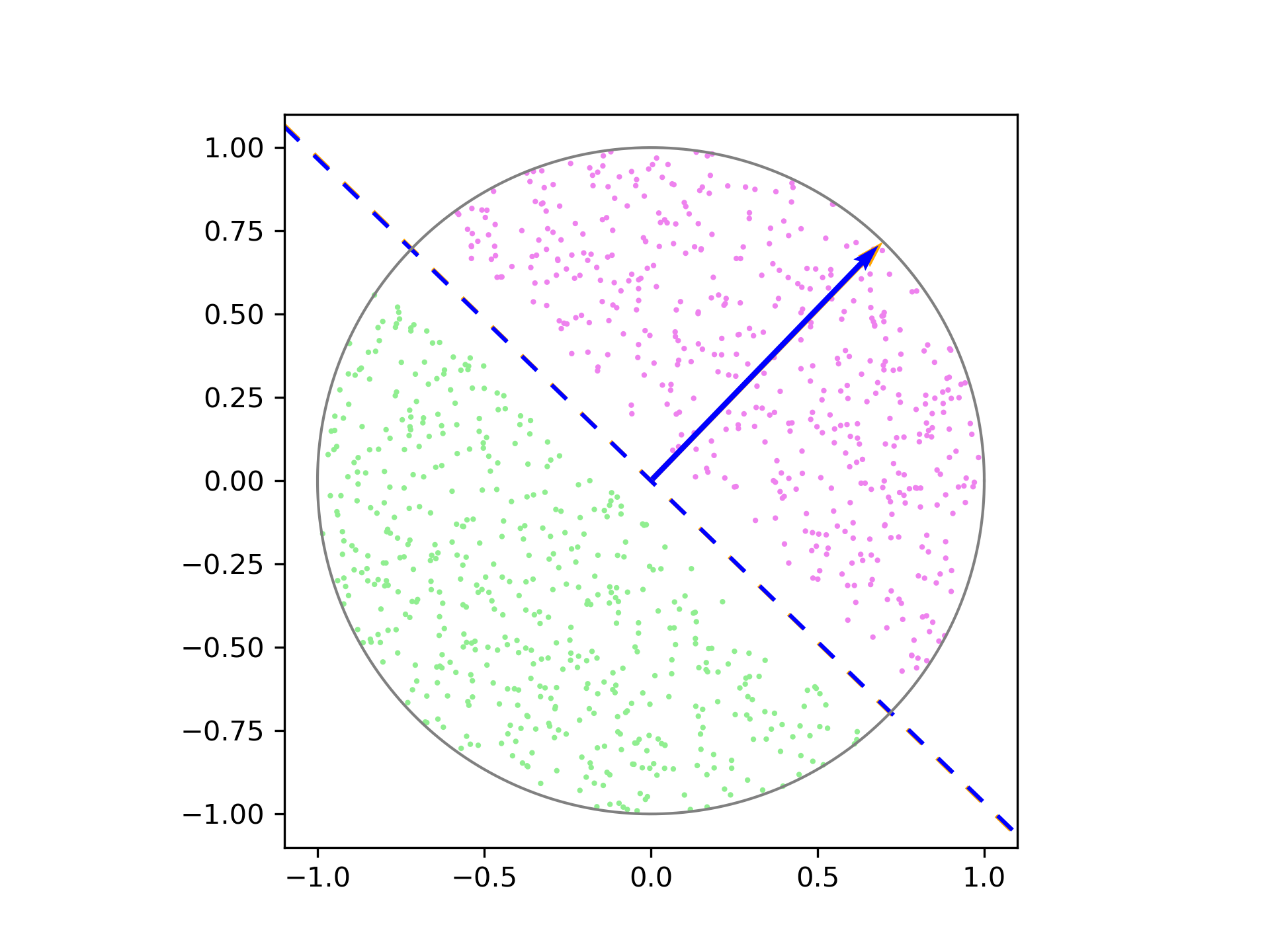}
    \end{subfigure}
    \begin{subfigure}{.19\linewidth}
        \centering
        \includegraphics[width=\linewidth]{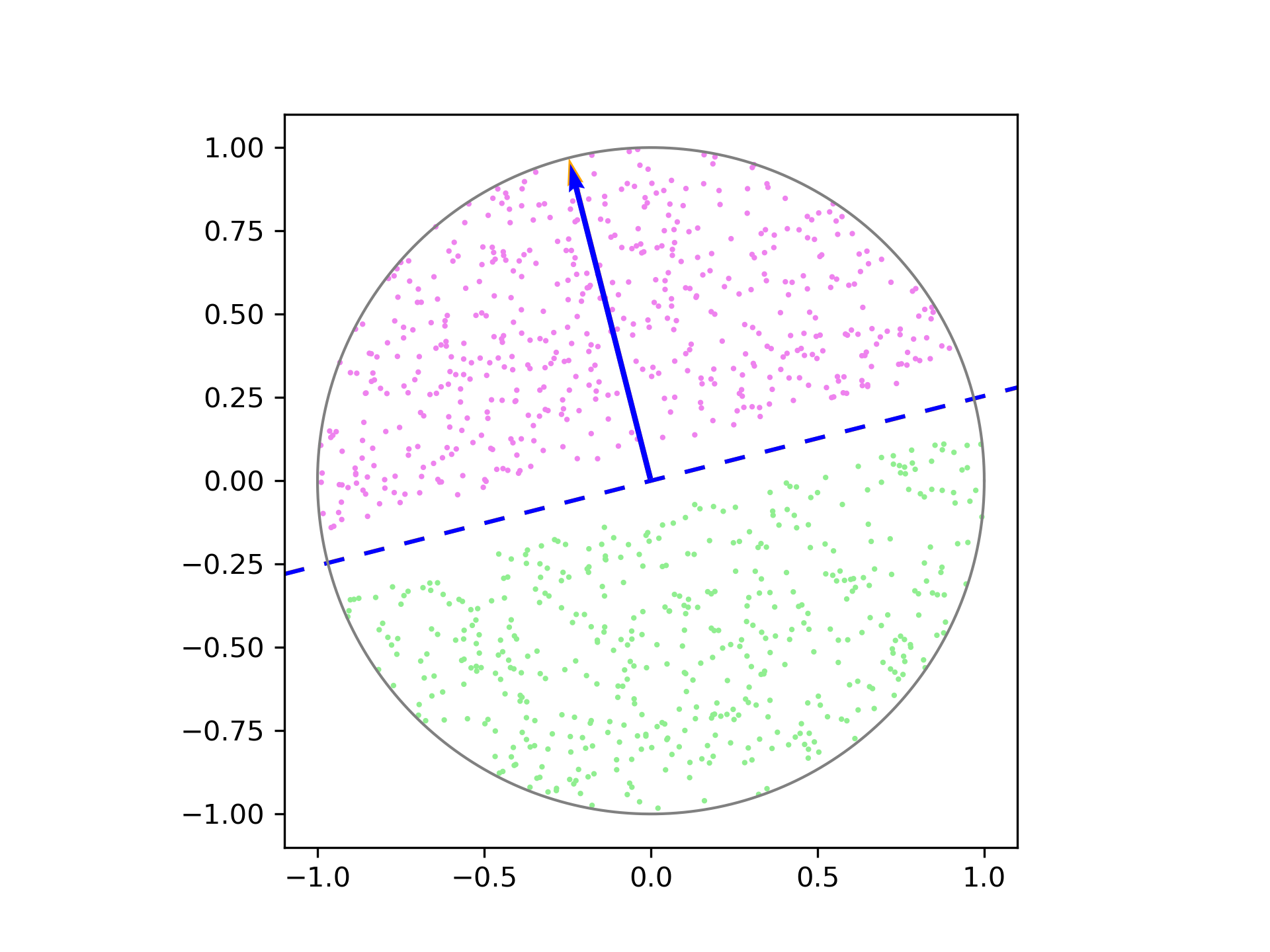}
    \end{subfigure}
    
    \begin{subfigure}{.19\linewidth}
        \centering
        \includegraphics[width=\linewidth]{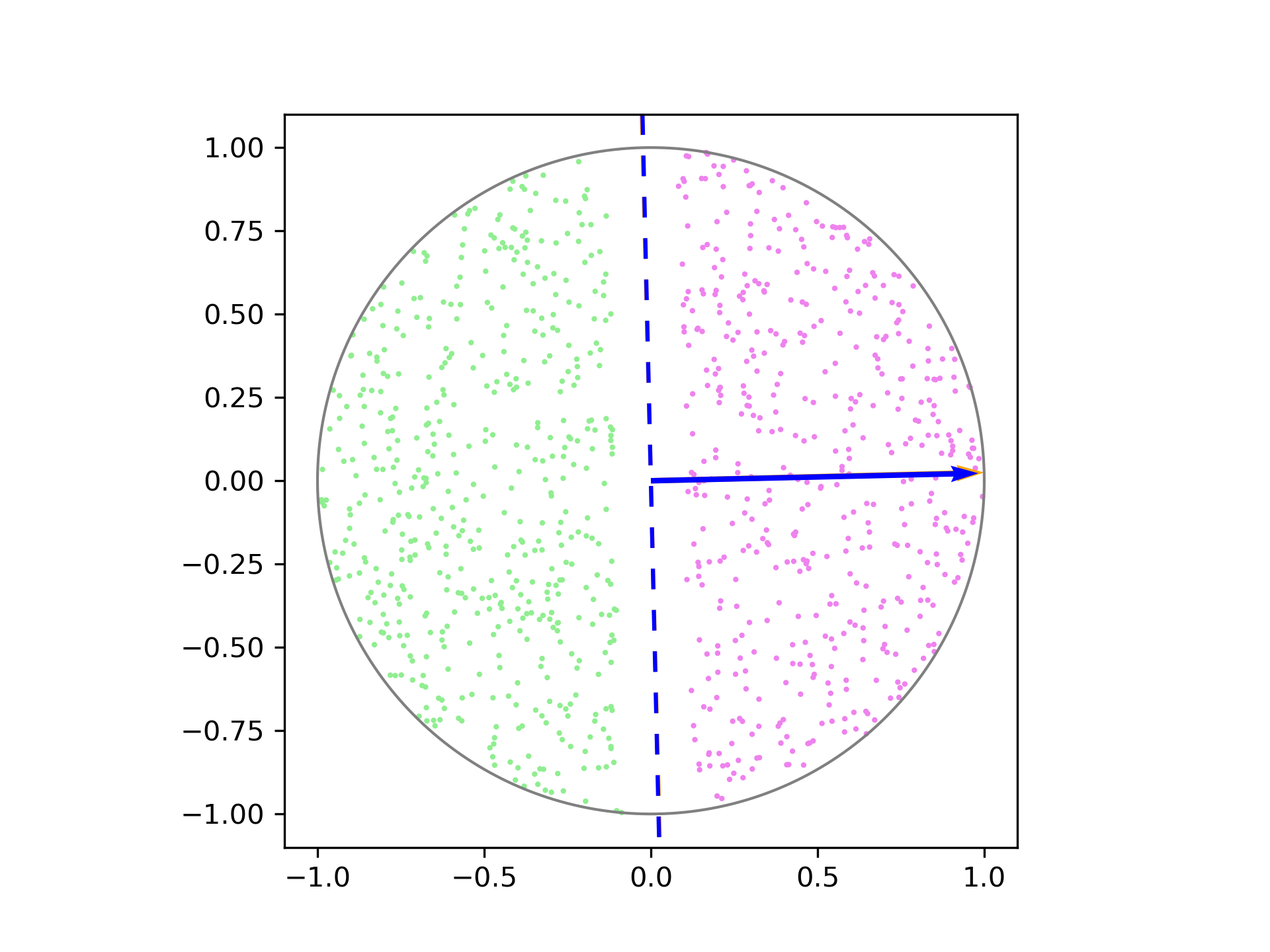}
    \end{subfigure}
    \begin{subfigure}{.19\linewidth}
        \centering
        \includegraphics[width=\linewidth]{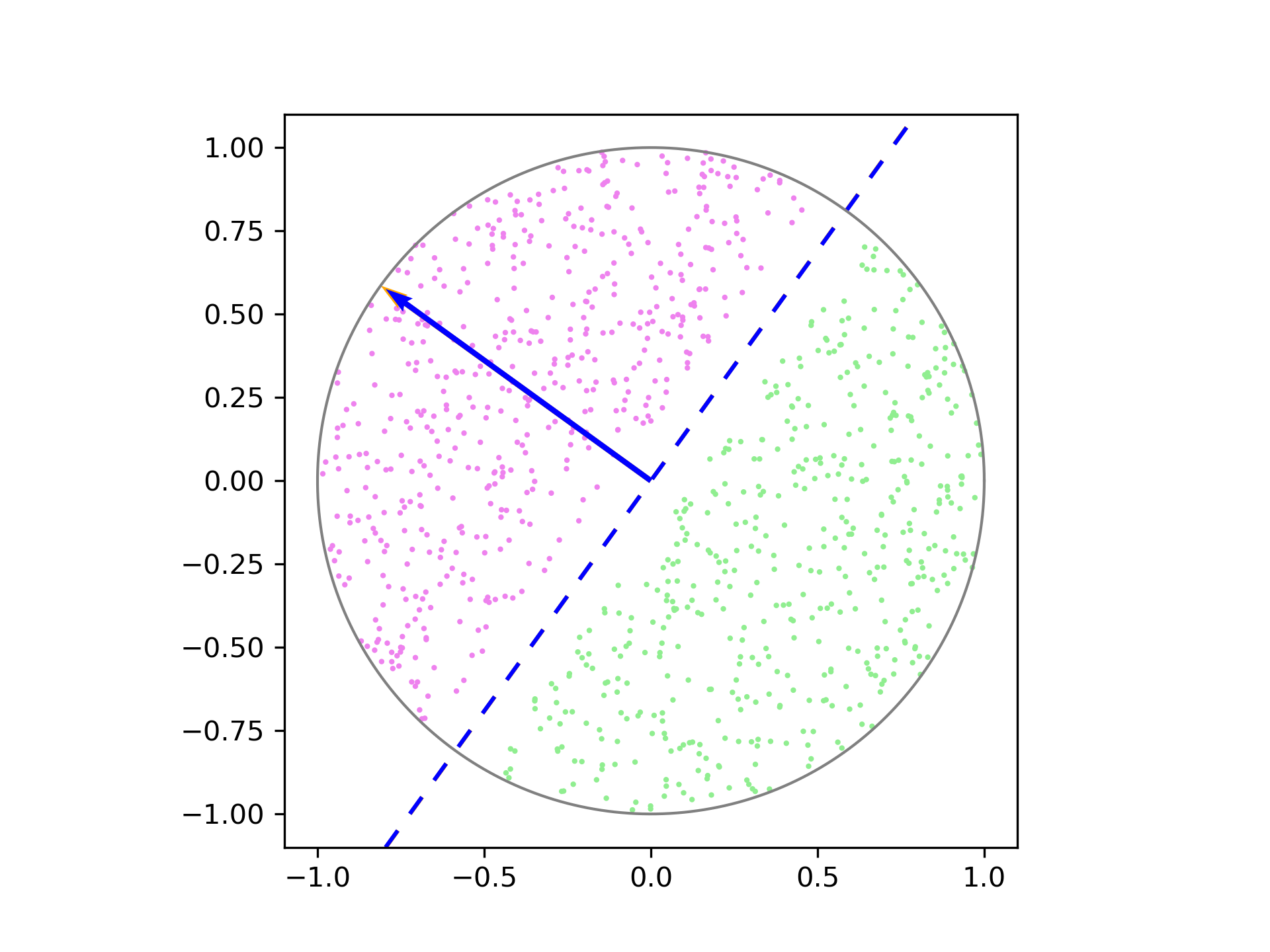}
    \end{subfigure}
    \begin{subfigure}{.19\linewidth}
        \centering
        \includegraphics[width=\linewidth]{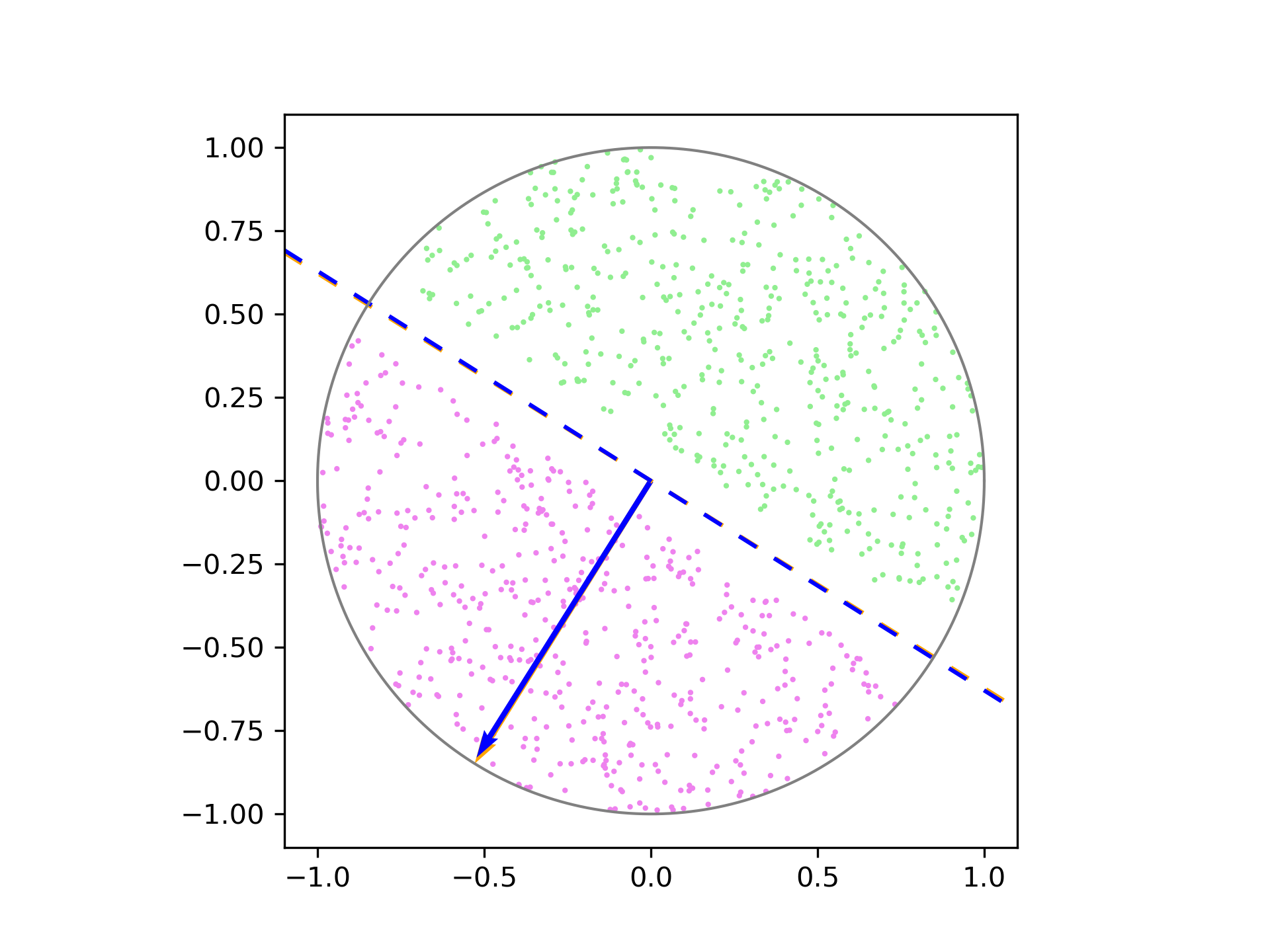}
    \end{subfigure}
    \begin{subfigure}{.19\linewidth}
        \centering
        \includegraphics[width=\linewidth]{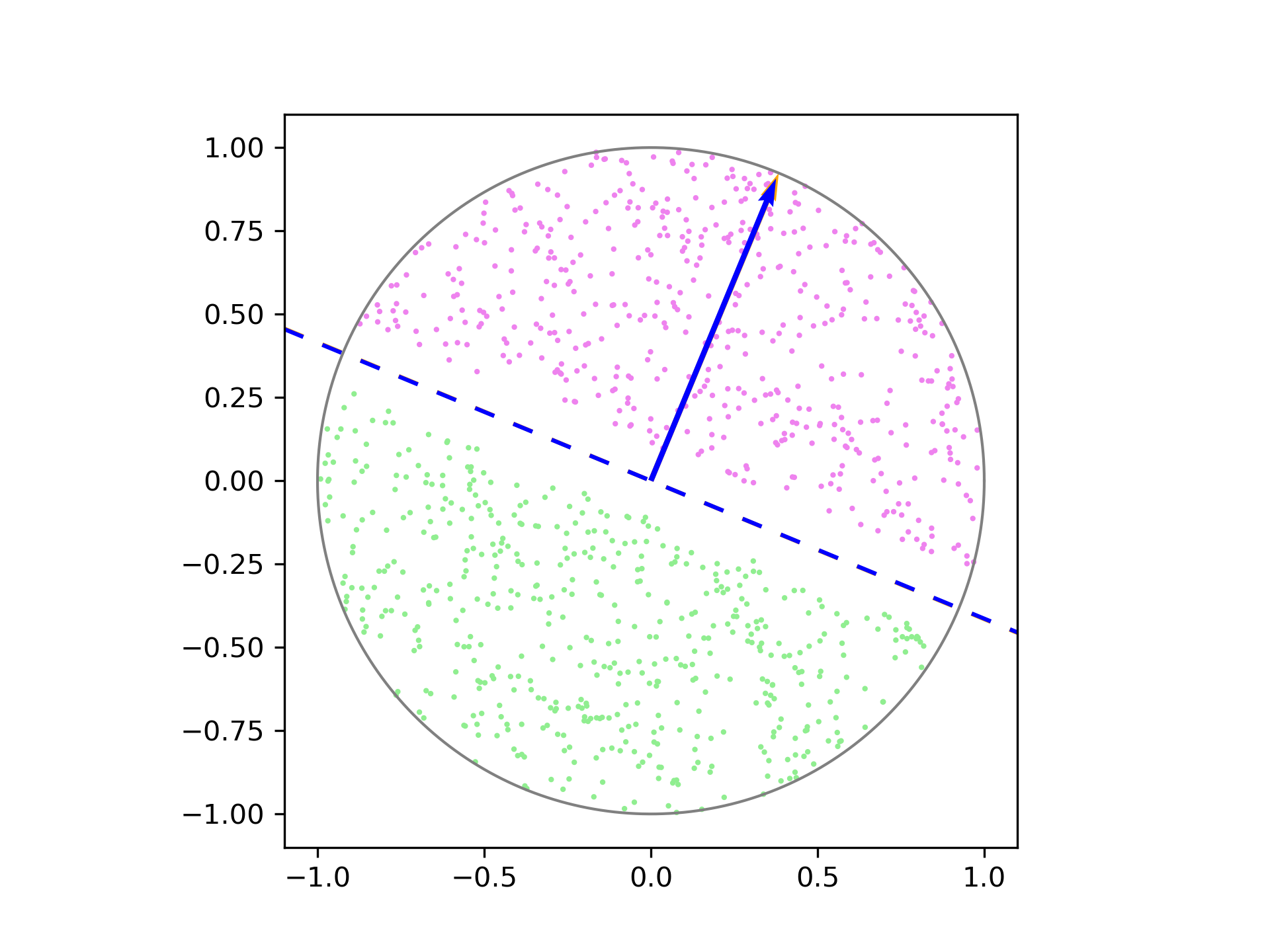}
    \end{subfigure}
    \begin{subfigure}{.19\linewidth}
        \centering
        \includegraphics[width=\linewidth]{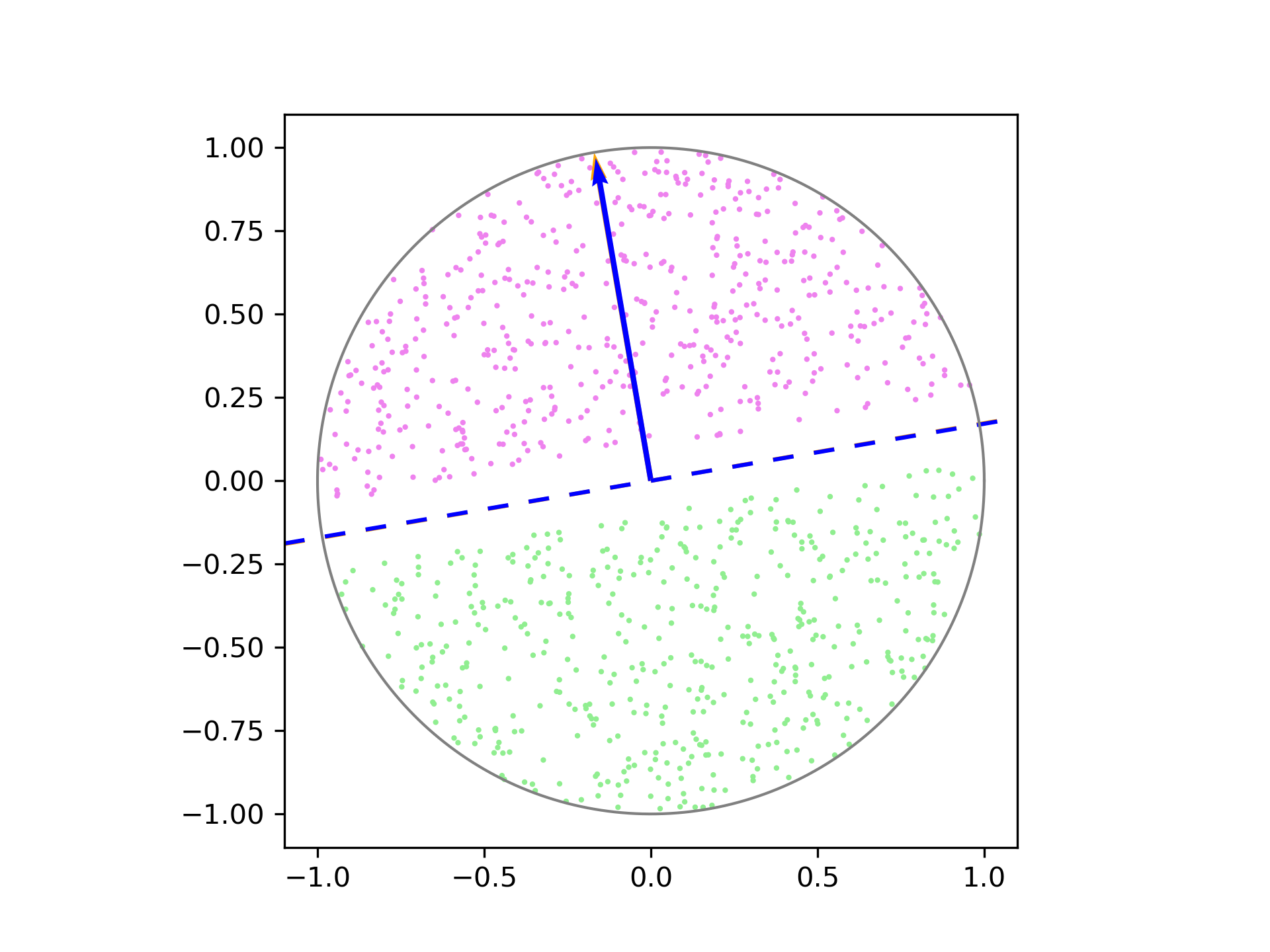}
    \end{subfigure}
    \caption{
    	Classifiers achieved by running \ref{SCMWU-ball} against \ref{MWU} in the SVM game (on $n = 10^3$ data points in $\R^2$) for time horizon $T = 10^4$. ({\textcolor{blue} {Blue}}: classifier $\overline{x}$ achieved by running \ref{SCMWU-ball} vs. \ref{MWU} in SVM game. {\textcolor{orange} {Orange}}: classifier $w$ used to generate data.)
	} 
    \label{fig:_SVM_viz_T10000}
\end{figure}

\section{Concluding Remarks}
 In this paper we introduce the framework  of Online Symmetric-Cone Optimization  that unifies and  generalizes the classical and quantum experts setups. Symmetric cones provide a  unifying framework for some of the most important optimization models, including linear, second-order cone, and semidefinite optimization. We introduce  the Symmetric-Cone Multiplicative Weights Update (\ref{SCMWU}), a no-regret algorithm that not only generalizes  the celebrated Multiplicative Weights Update algorithm over simplices and its matrix counterpart over density matrices but is also equivalent to both OMD and FTRL in this setting by nature of its entropic regularizer. That SCMWU is both OMD and FTRL is useful because there are different settings where one approach outperforms the other due to tricks/variations that can only apply in one of the two settings (see e.g., \cite{Fang}).

No-regret  algorithms for the experts setting (both the classical and quantum versions) have  important applications beyond online optimization that include distributed  learning dynamics  in game theory and algorithms for solving linear/semidefinite programs. One specific application of multiplicative weight algorithms to conic programming has been the application of \ref{MMWU} in devising efficient primal-dual algorithms for solving semidefinite programs \cite{ART:AK16,kale} and our work raises the immediate possibility of application in interesting and efficient algorithms for solving second-order cone programs with similar techniques and analysis as for MMWU. Inspired by these applications, in follow-up works  we explore uses of \ref{SCMWU} for learning in games where action spaces are symmetric cones, and for algorithms  for linear optimization over symmetric~cones.

\section*{Acknowledgments}
 
This research is supported in part by the National Research Foundation, Singapore and the Agency for Science, Technology and Research (A*STAR) under its Quantum Engineering Programme NRF2021-QEP2-02-P05, and by the National Research Foundation, Singapore and DSO National Laboratories under its AI Singapore Program (AISG Award No: AISG2-RP-2020-016), NRF 2018 Fellowship NRF-NRFF2018-07, NRF2019-NRF-ANR095 ALIAS grant, grant PIESGP-AI-2020-01, AME Programmatic Fund (Grant No.A20H6b0151) from A*STAR and Provost’s Chair Professorship grant RGEPPV2101. Wayne Lin gratefully acknowledges support from the SUTD President's Graduate Fellowship (SUTD-PGF).

\bibliography{arXivrefs}

\begin{thebibliography}{10}

\bibitem{Alizadeh}
F.~Alizadeh and D.~Goldfarb.
\newblock Second-order cone programming.
\newblock {\em Mathematical Programming}, 95:3--51, 2003.

\bibitem{spectral}
Z.~Allen-Zhu, Z.~Liao, and L.~Orecchia.
\newblock Spectral sparsification and regret minimization beyond matrix
  multiplicative update.
\newblock In {\em Proceedings of the Forty-Seventh Annual ACM Symposium on
  Theory of Computing}, (STOC 2015), page 237–245, New York, NY, USA, 2015.
  Association for Computing Machinery.

\bibitem{ART:AHK12}
S.~Arora, E.~Hazan, and S.~Kale.
\newblock The multiplicative weights update method: a meta-algorithm and
  applications.
\newblock {\em Theory of Computing}, 8(6):121--164, 2012.

\bibitem{ART:AK16}
S.~Arora and S.~Kale.
\newblock A combinatorial, primal-dual approach to semidefinite programs.
\newblock {\em J. ACM}, 63(2), 2016.

\bibitem{bauschke}
H.~H. Bauschke, J.~M. Borwein, et~al.
\newblock Legendre functions and the method of random {B}regman projections.
\newblock {\em Journal of convex analysis}, 4(1):27--67, 1997.

\bibitem{BOO:B04}
S.~Boyd, S.~P. Boyd, and L.~Vandenberghe.
\newblock {\em Convex optimization}.
\newblock Cambridge university press, 2004.

\bibitem{bubeck}
S.~Bubeck.
\newblock Introduction to online optimization.
\newblock Technical report, Department of Operations Research and Financial
  Engineering, Princeton University, Princeton, NJ, 2011.

\bibitem{carlen}
E.~A. Carlen.
\newblock Trace inequalities and quantum entropy: An introductory course.
\newblock Technical report, Department of Mathematics, Rutgers University,
  Piscataway, NJ, 2009.

\bibitem{carmon2019variance}
Y.~Carmon, Y.~Jin, A.~Sidford, and K.~Tian.
\newblock Variance reduction for matrix games.
\newblock {\em Advances in Neural Information Processing Systems}, 32, 2019.

\bibitem{cesabianchi}
N.~Cesa-Bianchi and G.~Lugosi.
\newblock {\em Prediction, learning, and games}.
\newblock Cambridge university press, 2006.

\bibitem{ART:CP10}
J.-S. Chen and S.~Pan.
\newblock An entropy-like proximal algorithm and the exponential multiplier
  method for convex symmetric cone programming.
\newblock {\em Computational Optimization and Applications}, 47:477--499, 11
  2010.

\bibitem{chen}
Y.~Chen and X.~Ye.
\newblock Projection onto a simplex.
\newblock {\em arXiv preprint arXiv:1101.6081}, 2011.

\bibitem{clarkson2012sublinear}
K.~L. Clarkson, E.~Hazan, and D.~P. Woodruff.
\newblock Sublinear optimization for machine learning.
\newblock {\em Journal of the ACM (JACM)}, 59(5):1--49, 2012.

\bibitem{duchi2008efficient}
J.~Duchi, S.~Shalev-Shwartz, Y.~Singer, and T.~Chandra.
\newblock Efficient projections onto the l1-ball for learning in high
  dimensions.
\newblock In {\em Proceedings of the 25th international conference on Machine
  learning}, pages 272--279, 2008.

\bibitem{Fang}
H.~Fang, N.~J.~A. Harvey, V.~S. Portella, and M.~P. Friedlander.
\newblock Online mirror descent and dual averaging: Keeping pace in the dynamic
  case.
\newblock {\em Journal of Machine Learning Research}, 23(121):1--38, 2022.

\bibitem{BOO:FK94}
J.~Faraut and A.~Kor{\'a}nyi.
\newblock {\em Analysis on Symmetric Cones}.
\newblock Oxford mathematical monographs. Clarendon Press, 1994.

\bibitem{freund}
Y.~Freund and R.~Shapire.
\newblock Adaptive game playing using multiplicative weights.
\newblock {\em Games and Economic Behavior}, pages 79--103, 1999.

\bibitem{BOO:H19}
E.~Hazan.
\newblock Introduction to online convex optimization.
\newblock {\em Found. Trends Optim.}, 2(3-4):157--325, 2016.

\bibitem{INP:HK08}
E.~Hazan and S.~Kale.
\newblock Extracting certainty from uncertainty: Regret bounded by variation in
  costs.
\newblock In {\em 21st Annual Conference on Learning Theory, (COLT 2008)},
  pages 57--67, Dec. 2008.

\bibitem{hildebrand}
R.~Hildebrand.
\newblock {A partial differential equation characterizing determinants of
  symmetric cones}.
\newblock In {\em {MAP 2012 - Mathematics, Algorithms, and Proofs 2012}},
  Constance, Germany, Sept. 2012.

\bibitem{jain2011qip}
R.~Jain, Z.~Ji, S.~Upadhyay, and J.~Watrous.
\newblock {QIP=PSPACE}.
\newblock {\em Journal of the ACM}, 58(6):1--27, 2011.

\bibitem{kale}
S.~Kale.
\newblock {\em Efficient Algorithms Using The Multiplicative Weights Update
  Method}.
\newblock PhD thesis, Department of Computer Science, Princeton University,
  2007.

\bibitem{lwar}
N.~Littlestone and M.~K. Warmuth.
\newblock The weighted majority algorithm.
\newblock {\em Information and Computation}, 108(2):212--261, 1994.

\bibitem{ART:McMahan}
H.~B. McMahan.
\newblock A survey of algorithms and analysis for adaptive online learning.
\newblock {\em J. Mach. Learn. Res.}, 18(1):3117–3166, jan 2017.

\bibitem{nem}
A.~Nemirovsky and D.~Yudin.
\newblock {\em Problem Complexity and Method Efficiency in Optimization}.
\newblock John Wiley \& Sons, 1983.

\bibitem{orabona}
F.~Orabona.
\newblock A modern introduction to online learning, 2022.

\bibitem{ART:SA03}
S.~Schmieta and F.~Alizadeh.
\newblock Extension of primal-dual interior point algorithms to symmetric
  cones.
\newblock {\em Mathematical Programming}, 96:409--438, 01 2003.

\bibitem{shwartz}
S.~Shalev-Shwartz.
\newblock Online learning and online convex optimization.
\newblock {\em Found. Trends Mach. Learn.}, 4(2):107–194, 2012.

\bibitem{ART:TWK21}
J.~Tao, G.~Q. Wang, and L.~Kong.
\newblock The {A}raki-{L}ieb-{T}hirring inequality and the {G}olden-{T}hompson
  inequality in {E}uclidean {J}ordan algebras.
\newblock {\em Linear and Multilinear Algebra}, 0(0):1--16, 2021.

\bibitem{ART:TRW05}
K.~Tsuda, G.~R{{\"a}}tsch, and M.~K. Warmuth.
\newblock Matrix exponentiated gradient updates for on-line learning and
  {B}regman projection.
\newblock {\em Journal of Machine Learning Research}, 6(34):995--1018, 2005.

\bibitem{vanden}
L.~Vandenberghe.
\newblock Symmetric cones [{L}ecture notes], {E}lectrical and computer
  engineering department, {UCLA}, {S}pring, 2016.

\bibitem{PHD:V07}
M.~Vieira.
\newblock {\em Jordan algebraic approach to symmetric optimization}.
\newblock PhD thesis, Electrical Engineering, Mathematics and Computer Science,
  Delft University of Technology, 2007.

\bibitem{vovk}
V.~Vovk.
\newblock Aggregating strategies.
\newblock In {\em Proceedings of the 3rd Workshop on Computational Learning
  Theory}, COLT '90, pages 371--383, San Francisco, CA, USA, 1990. Morgan
  Kaufmann Publishers Inc.

\bibitem{INP:WK06}
M.~K. Warmuth and D.~Kuzmin.
\newblock Online variance minimization.
\newblock In G.~Lugosi and H.~U. Simon, editors, {\em Learning Theory}, pages
  514--528, Berlin, Heidelberg, 2006. Springer Berlin Heidelberg.

\bibitem{yang2020revisiting}
F.~Yang, J.~Jiang, J.~Zhang, and X.~Sun.
\newblock Revisiting online quantum state learning.
\newblock In {\em Proceedings of the AAAI Conference on Artificial
  Intelligence}, volume~34, pages 6607--6614, 2020.

\bibitem{INP:Z03}
M.~Zinkevich.
\newblock Online convex programming and generalized infinitesimal gradient
  ascent.
\newblock In {\em Proceedings of the Twentieth International Conference on
  International Conference on Machine Learning}, (ICML 2003), pages 928--935,
  Washington, DC, USA, 2003. AAAI Press.

\end{thebibliography}
\bibliographystyle{abbrv}

\newpage
\appendix
\onecolumn

\section{Additional Technical Results} \label{appendix}

The following well-known theorem (see, e.g., Lemma 13 in \cite{ART:SA03}) gives a formula for the minimum eigenvalue of an element in $\cj$. We include the proof here for completeness.

\begin{theorem}\label{thm:eig_ineq}  Let $(\cj,\circ)$ be an EJA and $\cc$ its cone of squares. For any $x\in\cj$,  its smallest eigenvalue is given by
  $$\lam_{\min}(x)=\min_{s}\{ \la x, s\ra: {\rm tr}(s)=1, s\in \cc\}.$$  
\end{theorem}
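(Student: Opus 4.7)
The plan is to establish the identity via two matching inequalities. Write $x = \sum_{i=1}^r \lambda_i q_i$ in its type-II spectral decomposition, reordered so that $\lambda_1 = \lambda_{\min}(x)$, with $q_1, \ldots, q_r$ a Jordan frame.

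For the achievability direction (upper bound), I would take $s = q_1$ as a candidate feasible point. Since $q_1 = q_1^2$ is a square it lies in $\cc$, and since $q_1 \in \cc$ with eigenvalues $1,0,\ldots,0$ in its own spectral decomposition (as can be read off from $q_1 \circ q_1 = q_1$ and $q_1 \circ q_j = 0$ for $j \neq 1$), we have $\tr(q_1) = 1$. Using the Jordan frame relations $q_i \circ q_j = \delta_{ij}q_i$, the inner product computes to
\[
\la x, q_1 \ra = \tr\Bigl(\sum_i \lambda_i q_i \circ q_1\Bigr) = \lambda_1 \tr(q_1) = \lambda_{\min}(x),
\]
establishing that the minimum is at most $\lambda_{\min}(x)$.

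For the lower bound, the key observation is that $x - \lambda_{\min}(x)\, e \succeq_\cc 0$: its spectral decomposition is $\sum_i (\lambda_i - \lambda_{\min}(x)) q_i$, whose eigenvalues are all nonnegative, and by the characterization of $\cc$ recalled in Section~\ref{sec:jasc} this places it in $\cc$. Given any feasible $s \in \cc$ with $\tr(s) = 1$, self-duality of $\cc$ (property $(i)$ of symmetric cones) yields
\[
0 \le \la x - \lambda_{\min}(x)\, e, \, s \ra = \la x, s \ra - \lambda_{\min}(x)\, \tr(s) = \la x, s \ra - \lambda_{\min}(x),
\]
where I used $\la e, s\ra = \tr(e \circ s) = \tr(s)$. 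Rearranging gives $\la x, s\ra \ge \lambda_{\min}(x)$, matching the upper bound.

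\textbf{Main obstacle.} There is no deep obstacle; the proof is essentially two short calculations. The only subtlety is correctly invoking (a) the fact that a primitive idempotent in a Jordan frame has trace $1$ in the paper's normalization (which follows from its spectral decomposition having eigenvalues $1,0,\ldots,0$), and (b) the self-duality of $\cc$ to conclude the inner product inequality in the lower bound. Both are standard consequences of the preliminaries already recorded in Section~\ref{sec:jasc}, so the proof should be quite short.
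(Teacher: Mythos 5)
Your proof is correct and follows essentially the same strategy as the paper's: achievability by testing with the primitive idempotent $q_{i^*}$ attaining $\lambda_{\min}$, and the lower bound via self-duality of $\cc$. The only cosmetic difference is that the paper proves the lower bound by weighting each term of $\sum_i \lambda_i \, \tr(q_i \circ s)$ by $\lambda_i \ge \lambda_{\min}$ and using $\tr(q_i \circ s) \ge 0$ termwise, whereas you package the same inequality as $\la x - \lambda_{\min}(x)\, e, s\ra \ge 0$ by a single application of self-duality to the shifted element $x - \lambda_{\min}(x)\, e \in \cc$; both are correct and the underlying mechanism is identical.
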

\begin{proof}
Let $x=\sum_{i=1}^r\lambda_iq_i$ be the spectral decomposition of $x$. Then,
\begin{align*}
\la x, s\ra&={\rm tr}(x\circ s)={\rm tr } (\sum_i\lambda_iq_i\circ s )=\sum_i\lambda_i{\rm tr} (q_i\circ s)\\
&\geq\lambda_{\min}\sum_i{\rm tr}(q_i\circ s)=\lambda_{\min}{\rm tr}(e\circ s)=\lambda_{\min}{\rm tr}(s)=\lambda_{\min},
\end{align*} 
where the above inequality follows from the fact that the cone of squares is self-dual with respect to the trace inner product, and since both $q_i,s\in\cc$, this implies that 
${\rm tr} (q_i\circ s)\ge 0$
(see, e.g., Lemma 3.3.4 in \cite{PHD:V07}).

\noindent Now, let $q_{i^*}$ be the eigenvector corresponding to the smallest eigenvalue $\lambda_{\min} = \lambda_{i^*}$. Then, 
\begin{align*}
  \la x, q_{i^*}\ra&=\la \sum_i\lambda_i q_i, q_{i^*}\ra={\rm tr}( \sum_i\lambda_i q_i\circ q_{i^*})\\
  &=  \sum_i\lambda_i {\rm tr}(q_i\circ q_{i^*})=\lambda_{i^*}{\rm tr}(q_{i^*}\circ q_{i^*})\\
  &=\lambda_{i^*}{\rm tr}(q_{i^*}) =\lambda_{\min},   
\end{align*}
where the last equality follows since ${\rm tr}(q_{i^*})=1.$  
\end{proof}

The following result presents well-known properties of the EJA  exponential  and logarithm (see, e.g., \cite{hildebrand}).
 \begin{lemma}\label{expvslog}

The L\"{o}wner extensions  corresponding to the scalar exponential  $\exp(\cdot)$ and $\ln(\cdot)$ functions  defined as
\begin{align*}
\expl: \ &\cj\to {\rm int}(\cc), \quad x=\sum_{i=1}^r\lambda_iq_i \mapsto\sum_{i=1}^r \exp(\lambda_i) q_i\\
\lnl : \ & {\rm int}(\cc)\to \cj, \quad x=\sum_{i=1}^r\lambda_iq_i \mapsto\sum_{i=1}^r \ln(\lambda_i) q_i,
\end{align*}
are inverses to each other. 
 \end{lemma}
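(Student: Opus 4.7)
The plan is to verify both composition identities $\lnl \circ \expl = \mathrm{id}_{\cj}$ and $\expl \circ \lnl = \mathrm{id}_{\inte(\cc)}$ by direct calculation on spectral decompositions, after first checking that the domains and codomains line up so that the compositions make sense.

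First I would argue that $\expl(\cj) \subseteq \inte(\cc)$ and $\lnl(\inte(\cc)) \subseteq \cj$. For any $x = \sum_{i=1}^r \lambda_i q_i \in \cj$, the element $\expl(x) = \sum_i \exp(\lambda_i)\, q_i$ has spectrum $\{\exp(\lambda_i)\} \subset (0,\infty)$, and therefore lies in $\inte(\cc)$ by the eigenvalue characterization of the interior given in Section \ref{sec:jasc}. For $x \in \inte(\cc)$ the eigenvalues are strictly positive, so each $\ln(\lambda_i)$ is defined and $\lnl(x) \in \cj$ automatically.

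The key structural observation is that if $x = \sum_i \lambda_i q_i$ is a type-II spectral decomposition, then $\sum_i \exp(\lambda_i)\, q_i$ is itself a type-II spectral decomposition of $\expl(x)$ — the Jordan frame $\{q_i\}$ is inherited unchanged, and the new eigenvalues are $\{\exp(\lambda_i)\}$. The analogous statement holds for $\lnl$. Given this, both inverse identities reduce to a one-line computation: for any $x = \sum_i \lambda_i q_i \in \cj$,
\[
    \lnl(\expl(x)) \;=\; \lnl\!\left( \sum_i \exp(\lambda_i)\, q_i \right) \;=\; \sum_i \ln(\exp(\lambda_i))\, q_i \;=\; \sum_i \lambda_i\, q_i \;=\; x,
\]
and symmetrically $\expl(\lnl(x)) = x$ for $x \in \inte(\cc)$, using $\exp(\ln(\lambda_i)) = \lambda_i$ when $\lambda_i > 0$.

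The main obstacle is the well-definedness of the Löwner extension when $x$ has repeated eigenvalues, since in that case the Jordan frame appearing in the spectral decomposition is not unique. This is what one needs to justify in order to say that $\expl$ and $\lnl$ are genuine functions of $x$ rather than of a chosen decomposition. The standard way to handle this is via the Peirce decomposition of $\cj$ with respect to the distinct eigenvalues of $x$: the spectral projector onto each eigenspace is uniquely determined by $x$, and the Löwner extension depends only on these projectors and on the values $f(\lambda_i)$. This fact is classical (see, e.g., Theorem III.1.2 and the discussion in \cite{BOO:FK94}), and once invoked the lemma follows immediately from the calculation above.
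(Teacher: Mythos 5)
Your proof is correct and takes essentially the same route as the paper's: verify both composition identities $\lnl \circ \expl = \mathrm{id}_{\cj}$ and $\expl \circ \lnl = \mathrm{id}_{\inte(\cc)}$ by a direct calculation on spectral decompositions, after confirming via the eigenvalue characterization that $\expl$ maps into $\inte(\cc)$. You are in fact a bit more careful than the paper in explicitly flagging the well-definedness of the L\"{o}wner extension when eigenvalues repeat (non-uniqueness of the Jordan frame) and resolving it via the uniqueness of the spectral projectors; the paper's proof leaves that point implicit.
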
 
 \begin{proof}The range of $\expl$ is $\inte(\cc)$ since for any $y = \sum_{i=1}^r \lambda_i q_i \in \inte(\cc)$, $\lambda_i > 0 \ \forall  i$ so $\ln(\lambda_i)$ is well-defined and 
 \[
    \expl(\sum_{i=1}^r \ln(\lambda_i) q_i) = \sum_{i=1}^r \exp(\ln(\lambda_i))q_i = \sum_{i=1}^r \lambda_i q_i = y.
\]

Similarly, the range of $\lnl$ is $\cj$ since for any $ x = \sum_{i=1}^r \lambda_i q_i \in \cj$,
\[
    \lnl(\sum_{i=1}^r \exp(\lambda_i) q_i)
    = \sum_{i=1}^r \ln(\exp(\lambda_i)) q_i
    = \sum_{i=1}^r \lambda_i q_i
    = x.
\]

Finally, $\forall x = \sum_{i=1}^r \lambda_i q_i \in \cj$,
\[
    \lnl(\expl(x)) = \lnl(\sum_{i=1}^r \exp(\lambda_i) q_i) = \sum_{i=1}^r \ln(\exp(\lambda_i)) q_i = \sum_{i=1}^r \lambda_i q_i = y,
\]
and $\forall y = \sum_{i=1}^r \lambda_i q_i \in \inte(\cj)$ so that $\lambda_i > 0 \ \forall i$,  
\[
    \expl(\lnl y) = \expl(\sum_{i=1}^r \ln(\lambda_i) q_i) = \sum_{i=1}^r \exp(\ln(\lambda_i)) = \sum_{i=1}^r \lambda_i q_i = x,
\]
so $\expl$ and $\lnl$ are inverses of each other.
 \end{proof} 
 
\begin{lemma}\label{lem:expe}
Let $\cj$ be an EJA with identity element $e$. For all $x\in\cj$ and $c\in\R$,
$$\expl(c e+x)=\exp(c)\expl(x).$$
\end{lemma}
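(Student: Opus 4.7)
The plan is to prove the identity via the spectral decomposition of $x$ together with the defining property of a Jordan frame. Specifically, write the type-II spectral decomposition $x = \sum_{i=1}^r \lambda_i q_i$, where $\{q_1,\ldots,q_r\}$ is a Jordan frame. The key observation is that any Jordan frame sums to the identity, $\sum_{i=1}^r q_i = e$, so we may rewrite
\[
    ce + x \;=\; c\sum_{i=1}^r q_i + \sum_{i=1}^r \lambda_i q_i \;=\; \sum_{i=1}^r (c + \lambda_i)\, q_i,
\]
which is again a decomposition along the same Jordan frame.

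From here the result will follow directly from the definition of the Löwner extension applied to the scalar function $\exp$, and the standard scalar identity $\exp(c+\lambda_i) = \exp(c)\exp(\lambda_i)$. First I would apply the definition of $\expl$ to the above decomposition to get
\[
    \expl(ce + x) \;=\; \sum_{i=1}^r \exp(c+\lambda_i)\, q_i,
\]
then factor the scalar $\exp(c)$ out of the sum to obtain $\exp(c)\sum_{i=1}^r \exp(\lambda_i)\, q_i = \exp(c)\, \expl(x)$.

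I do not anticipate a real obstacle: the only subtlety is making sure that $\sum_i(c+\lambda_i)q_i$ is a legitimate input to the Löwner extension even if some $c+\lambda_i$ coincide or if the ordering changes. This is handled by the fact that the Löwner extension is defined intrinsically from the spectral decomposition, whose coefficients are unique up to multiplicity and reordering, so any representation along a Jordan frame yields the same image under $\expl$. Thus the argument reduces to a one-line calculation once the Jordan-frame rewriting of $ce+x$ is made.
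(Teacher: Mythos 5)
Your proof is correct and matches the paper's argument exactly: both write the type-II spectral decomposition of $x$, use $\sum_i q_i = e$ to absorb $ce$ into the same Jordan frame, apply the Löwner extension of $\exp$, and factor out $\exp(c)$. The extra remark on well-definedness of the Löwner extension under re-expression along a Jordan frame is a sensible (if implicit in the paper) clarification.
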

\begin{proof}
Let $x=\sum_{i=1}^r\lambda_iq_i$ be the spectral decomposition of $x$. Then,
\begin{align*}
    \expl(c e+x)&=\expl(c\sum_iq_i+\sum_i\lambda_iq_i)\\
    &=\expl(\sum_i(c+\lambda_i)q_i)=\sum_i \exp(c+\lambda_i)q_i\\
    &=\exp(c)\sum_i\exp(\lambda_i)q_i\\
    &=\exp(c)\expl(x),
\end{align*}
where the first equality follows from the fact that $\sum_iq_i=~e.$
\end{proof}

\begin{lemma}\label{lem:cone_ineq}
Let $f,g:\R\to \R$ satisfy $g(t) \le f(t) \; \forall t\in S \subseteq \R$, and let $(\cj,\circ)$ be an EJA with cone of squares $\cc$. Then, for any element $x\in\cj$  with all its eigenvalues $\lambda_i \in S$, the following holds:
$$g(x) \preceq_\cc f(x).$$
\end{lemma}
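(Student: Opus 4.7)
The plan is to use the spectral decomposition of $x$ together with the Löwner-extension definition of $f(x)$ and $g(x)$, and then invoke the earlier characterization that an element of $\cj$ lies in $\cc$ if and only if all its eigenvalues are nonnegative.

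First I would take the type-II spectral decomposition $x = \sum_{i=1}^{r} \lambda_i q_i$ with $\{q_1, \ldots, q_r\}$ a Jordan frame. By hypothesis $\lambda_i \in S$ for every $i$, so by the definition of the Löwner extension,
\[
    \mathbf{f}(x) = \sum_{i=1}^{r} f(\lambda_i)\, q_i, \qquad \mathbf{g}(x) = \sum_{i=1}^{r} g(\lambda_i)\, q_i.
\]
Subtracting gives
\[
    \mathbf{f}(x) - \mathbf{g}(x) = \sum_{i=1}^{r} \bigl(f(\lambda_i) - g(\lambda_i)\bigr) q_i,
\]
and since the $q_i$'s still form a Jordan frame, this expression is already a valid spectral decomposition of $\mathbf{f}(x) - \mathbf{g}(x)$ with eigenvalues $f(\lambda_i) - g(\lambda_i)$.

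The pointwise hypothesis $g(t) \le f(t)$ on $S$ applied at each $\lambda_i \in S$ yields $f(\lambda_i) - g(\lambda_i) \ge 0$ for all $i$. Hence every eigenvalue of $\mathbf{f}(x) - \mathbf{g}(x)$ is nonnegative. Invoking the characterization stated earlier in the excerpt, namely that $y \in \cc \iff$ all eigenvalues of $y$ are nonnegative, we conclude that $\mathbf{f}(x) - \mathbf{g}(x) \in \cc$, which is exactly the statement $\mathbf{g}(x) \preceq_\cc \mathbf{f}(x)$.

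There is essentially no obstacle here: the lemma is a direct consequence of the fact that Löwner extensions act entrywise on eigenvalues with respect to a common Jordan frame, combined with the eigenvalue criterion for cone membership. The only care needed is to note that the $q_i$'s are indeed a Jordan frame for the difference $\mathbf{f}(x) - \mathbf{g}(x)$, so that the coefficients $f(\lambda_i) - g(\lambda_i)$ really are the eigenvalues of this element rather than just coefficients in some arbitrary linear combination.
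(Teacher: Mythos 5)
Your proof is correct and follows essentially the same route as the paper: take the spectral decomposition of $x$, observe that $\mathbf{f}(x)-\mathbf{g}(x)=\sum_i (f(\lambda_i)-g(\lambda_i))q_i$ is a spectral decomposition with nonnegative eigenvalues, and apply the eigenvalue characterization of membership in $\cc$. Your extra remark that the $q_i$'s remain a valid Jordan frame for the difference is a nice point of care, though the paper treats it as immediate.
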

\begin{proof}
Let $x=\sum_{i=1}^r\lambda_iq_i$ be the spectral decomposition of $x$. Then, the spectral decomposition of $f(x) - g(x)$ is given by
$$f(x) - g(x) = \sum_{i=1}^r (f(\lambda_i) - g(\lambda_i))q_i.$$
Moreover, since $\lambda_i \in S$ $\forall i$ we have that $f(\lambda_i) - g(\lambda_i) \geq 0$ $\forall i$, and hence the above is an element of $\cc$.   
\end{proof}

\begin{lemma} A quadratic upper bound on $\exp$ is given by
\label{bound:_exp_real_quad}
	\[
		\exp(-x) \leq 1 - x + x^2 \qquad \forall x \in \R, \; x \geq -\frac{3}{e}.
	\]
\end{lemma}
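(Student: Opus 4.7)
My plan is to reduce the inequality to showing $g(x) \geq 0$ for $x \geq -3/e$, where
$$g(x) := (1 - x + x^2)\, e^x - 1.$$
Since $1 - x + x^2 = (x - 1/2)^2 + 3/4 > 0$ everywhere, multiplying $\exp(-x) \leq 1 - x + x^2$ through by $e^x > 0$ makes the two inequalities equivalent, and it is cleaner to work with $g$.

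The crucial simplification is that $g'$ factors beautifully: $g'(x) = (2x - 1)e^x + (1 - x + x^2)e^x = x(x+1)e^x$. From the sign of $x(x+1)$ I read off that $g$ is increasing on $(-\infty, -1]$, decreasing on $[-1, 0]$, and increasing on $[0, \infty)$. Since $g(0) = 0$, monotonicity around the local minimum at $0$ immediately yields $g(x) \geq 0$ for all $x \in [-1, \infty)$. On the remaining slice $x \in [-3/e, -1]$, $g$ is increasing, so it suffices to verify $g(-3/e) \geq 0$, i.e., $e^{3/e} \leq 1 + 3/e + 9/e^2$.

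This last numerical inequality is the main obstacle. I would prove it by bounding the Taylor tail of $\exp$ using the elementary fact $k! \geq 2 \cdot 3^{k-2}$ for $k \geq 2$ (a one-line induction, since $k+1 \geq 3$). Summing the resulting geometric series gives the uniform estimate
$$\sum_{k=2}^{\infty} \frac{y^k}{k!} \;\leq\; \frac{y^2}{2} \sum_{j=0}^{\infty}\left(\frac{y}{3}\right)^j \;=\; \frac{3y^2}{2(3-y)}, \qquad 0 \leq y < 3.$$
Specializing at $y = 3/e \in (0,3)$ yields $e^{3/e} \leq 1 + 3/e + 9/(2e(e-1))$, and it then suffices to check $9/(2e(e-1)) \leq 9/e^2$, which rearranges to $e \geq 2$ and clearly holds. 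Chaining these inequalities gives $g(-3/e) \geq 0$ and completes the proof.
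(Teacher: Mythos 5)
Your proof is correct and takes a genuinely different route from the paper's. The paper applies Taylor's theorem with Lagrange remainder to $\exp(-x)$: for $x \geq 0$ the remainder $R_3(x) = -\tfrac{\exp(-\xi_L)}{6}x^3$ is nonpositive, and for $x \in [-3/e,0)$ the bound $-\exp(-\xi_L)x \leq -ex \leq 3$ forces $R_3(x) \leq \tfrac{1}{2}x^2$; in both regimes $\exp(-x) \leq 1 - x + \tfrac{1}{2}x^2 + R_3(x) \leq 1 - x + x^2$. You instead multiply through by $e^x$, observe the fortuitous factorization $g'(x) = x(x+1)e^x$ for $g(x) = (1-x+x^2)e^x - 1$, and reduce the problem to a monotonicity argument around the local minimum $g(0)=0$ plus a single endpoint check $g(-3/e) \geq 0$, which you verify via a geometric-series majorization of the Taylor tail of $\exp$. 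Your approach avoids invoking the mean-value form of the remainder and makes transparent exactly where equality is attained and where the bound is tight, at the cost of a slightly fiddlier numerical verification at the left endpoint; the paper's approach is shorter but ties the constant $3/e$ directly to the remainder estimate in a way that is less illuminating. Both are sound. (Incidentally, note the paper's proof contains a sign typo in its final displayed line, writing $1 + x + x^2$ where $1 - x + x^2$ is meant.)
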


\begin{proof}
	By Taylor's theorem,
	\begin{equation*}
	\begin{split}
		\exp(-x)
		&= 
		\exp(-0) +\frac{ - \exp(-0) }{1} x + \frac{ \exp(-0) }{2} x^2 +R_3(x)
		\\
		&=
		1 - x + \frac{1}{2} x^2 + R_3(x),
	\end{split}
	\end{equation*}
	where $R_3(x) = - \frac{\exp(-\xi_L)}{3!} x^3$ for some $\xi_L$ in the interval between $0$ and $x$.
	
	For all $ x \geq 0$, we have that $R_3(x) \leq 0$, which implies that
	\[
		\exp(-x) \leq 1 - x + \frac{1}{2}x^2 \leq 1 - x + x^2.
	\]
	
	On the other hand, $\forall x \in [-\frac{3}{e}, 0)$,
	\[
		3 \geq -ex \geq - \exp(-\xi_L) x \qquad \forall \xi_L \in [x, 0]
	\]
	so that
	\[
		R_3 (x) = - \frac{\exp(-\xi_L)}{3!} x^3 \leq \frac{1}{2} x^2,
	\]
	and thus
	\[
		\exp(-x)
		=
		1 - x + \frac{1}{2} x^2 + R_3(x)
		\leq
		1 + x + x^2.
	\]
\end{proof}

\begin{lemma}
\label{lem:_entropy_central_bound}
    Let $(\cj,\circ)$ be a rank-$r$ EJA and and $\cc$ its cones of squares. Then, we have that
    $$\HH_\Phi \left(u,\frac{e}{r} \right) \leq \ln r  \quad \forall u\in \cc \text{ with } \tr(u)=1.$$
\end{lemma}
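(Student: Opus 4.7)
The plan is to compute $\HH_\Phi(u, e/r)$ in closed form using Lemma \ref{bregproof} and observe that it reduces to the negative of the symmetric-cone entropy of $u$ shifted by $\ln r$.

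First, I would note that for any Jordan frame $q_1,\ldots,q_r$ summing to $e$, we have $e/r = \sum_i (1/r) q_i$, so the L\"owner extension gives
\[
    \lnl(e/r) = \sum_i \ln(1/r) q_i = -\ln(r) \cdot e,
\]
and $\tr(e/r) = 1$. Plugging $y = e/r$ into the formula from Lemma \ref{bregproof} and using $u \circ e = u$ (so $\tr(u \circ (-\ln(r) e)) = -\ln(r) \tr(u) = -\ln(r)$), together with $\tr(u) = \tr(e/r) = 1$, yields
\[
    \HH_\Phi(u, e/r) = \tr(u \circ \lnl u) + \ln(r) + 1 - 1 = \Phi(u) + \ln r.
\]

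It then suffices to show that $\Phi(u) \le 0$ whenever $u \in \cc$ with $\tr(u) = 1$. Writing the spectral decomposition $u = \sum_i \lambda_i q_i$, we have $\lambda_i \ge 0$ and $\sum_i \lambda_i = \tr(u) = 1$, so each $\lambda_i \in [0,1]$. Since the scalar function $t \mapsto t \ln t$ is nonpositive on $[0,1]$ (extended continuously by $0 \ln 0 = 0$), we get $\Phi(u) = \sum_i \lambda_i \ln \lambda_i \le 0$, which combined with the identity above gives the claim.

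There is essentially no obstacle: the only mild subtlety is handling boundary elements of $\cc$, where some $\lambda_i$ vanish and the Bregman divergence is technically defined only on $\inte(\cc)$; this is resolved by the standard continuous extension $t \ln t \to 0$ as $t \to 0^+$, which makes both $\Phi$ and $\HH_\Phi(\cdot, e/r)$ well-defined on the whole trace-one slice of $\cc$. This is exactly the classical bound $\mathrm{KL}(u \| \tfrac{1}{r}\mathbf{1}) \le \ln r$ and its von Neumann analogue, generalized verbatim to arbitrary symmetric cones.
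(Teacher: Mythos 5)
Your proof is correct and follows essentially the same path as the paper's: both rewrite $\HH_\Phi(u, e/r)$ via the formula of Lemma~\ref{bregproof}, use that $\lnl(e/r) = -(\ln r)e$ to get $-\tr(u\circ\lnl(e/r)) = \ln r$, and then bound $\Phi(u) = \sum_i \lambda_i \ln\lambda_i \le 0$ from the eigenvalues lying in $[0,1]$. Your explicit remark about the continuous extension of $t\ln t$ to the boundary of $\cc$ is a valid clarification that the paper leaves implicit, but the underlying argument is identical.
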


\begin{proof}
Since $\tr(\frac{e}{r}) = \tr(u) = 1$, we have that
\[
\HH_\Phi \left(u,\frac{e}{r} \right) =\tr(u \circ \lnl u)-\tr (u \circ \lnl \frac{e}{r} ).
\]
Let $u=\sum_i\lam_i q_i$. Note that $u$ and $\lnl u$ share the same  Jordan frame, so
$$\tr(u \circ \lnl u)=\sum_i\lam_i\ln (\lam_i)\le 0$$
as all  the eigenvalues of $u$ are in $[0,1]$.
Finally, 
$$u \circ \lnl \frac{e}{r}=(\sum_i\lam_i q_i)\circ \sum_i (\ln \frac{1}{r}) q_i=-(\ln r)\sum_i\lam_iq_i.$$
Thus,
$$\tr (u \circ \lnl \frac{e}{r} )=-(\ln r)(\sum_i\lam_i)=-\ln r$$
since $\tr(u) =\sum_i\lambda_i=1$. Putting everything together we get~$\HH_\Phi(u, \frac{e}{r}) \leq \ln r$.
\end{proof}

\section{An alternative  regret proof}
\label{appendix:_altreg}

\begin{theorem}\label{thm:alter}

 Let $(\cj,\circ)$ be an EJA of rank $r$ and $\cc$ be its cone of squares. For any $\eta>0$ and any sequence of loss vectors     
$ -e \preceq_\cc m_t\preceq_\cc e ,$ the iterates $\{p_t\}_t$ of \ref{SCMWU} with stepsize $\eta \leq 1$ satisfy
\begin{equation*}
	\sum_{t=1}^T \innerprod{m_t}{p_t}
	\leq
	\sum_{t=1}^T \innerprod{m_t}{u}
	+
	\eta \sum_{t=1}^T \innerprod{m_t^2}{p_t}
	+
	\frac{\ln r}{\eta}
\end{equation*}
for all $u \in \cc$ satisfying $\tr(u) = 1$.
\end{theorem}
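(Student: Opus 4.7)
The plan is to give a potential-function style proof that mirrors the classical analyses of \ref{MWU} and \ref{MMWU}. Define the unnormalized weight $W_t := \expl(-\eta \sum_{i=1}^t m_i)$ and the scalar potential $\Phi_t := \tr(W_t)$, so that the \ref{SCMWU} iterates can be written as $p_{t+1} = W_t / \Phi_t$ and in particular $W_0 = e$, $\Phi_0 = r$, $p_1 = e/r$. The goal is to derive a tight multiplicative recursion for $\Phi_t$ and then sandwich $\Phi_T$ between an upper bound built from the $p_t$'s and a lower bound built from an arbitrary competitor $u$.

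For the upper bound I would peel off the most recent loss using the generalized Golden--Thompson inequality (as quoted inside the proof of Lemma \ref{lem:regret}), giving $\Phi_t \leq \tr(W_{t-1} \circ \expl(-\eta m_t))$. Since $-e \preceq_\cc m_t \preceq_\cc e$ and $\eta \leq 1$, the eigenvalues of $-\eta m_t$ lie in $[-1,1]$, which is inside the regime $[-3/e, \infty)$ where Lemma \ref{bound:_exp_real_quad} applies; combined with Lemma \ref{lem:cone_ineq} this yields the cone inequality $\expl(-\eta m_t) \preceq_\cc e - \eta m_t + \eta^2 m_t^2$. Self-duality of $\cc$ together with $W_{t-1} \in \cc$ then gives the scalar estimate $\Phi_t \leq \Phi_{t-1}\bigl(1 - \eta \la p_t, m_t \ra + \eta^2 \la p_t, m_t^2 \ra\bigr)$, after dividing and multiplying by $\Phi_{t-1}$ to rewrite $W_{t-1}$ as $\Phi_{t-1} \, p_t$. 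Applying $1 + x \leq \exp(x)$ and telescoping over $t = 1, \dots, T$ yields $\Phi_T \leq r \exp\bigl(-\eta \sum_t \la p_t, m_t \ra + \eta^2 \sum_t \la p_t, m_t^2 \ra\bigr)$.

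For the matching lower bound, I would exploit that a single L\"owner eigenvalue already suffices: since every summand in $\tr(\expl(\cdot))$ is positive, $\Phi_T \geq \exp(\lambda_{\max}(-\eta \sum_t m_t)) = \exp(-\eta \, \lambda_{\min}(\sum_t m_t))$. The variational formula in Theorem \ref{thm:eig_ineq} gives $\lambda_{\min}(\sum_t m_t) \leq \la \sum_t m_t, u \ra$ for every $u \in \cc$ with $\tr(u) = 1$, so $\Phi_T \geq \exp(-\eta \sum_t \la m_t, u \ra)$. Combining with the upper bound, taking logarithms, and dividing by $\eta > 0$ produces exactly the advertised regret inequality.

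The main technical point will be justifying the quadratic cone upper bound on $\expl(-\eta m_t)$, which is where the stepsize constraint $\eta \leq 1$ is genuinely used and where the scalar bound of Lemma \ref{bound:_exp_real_quad} must be lifted to the cone order via Lemma \ref{lem:cone_ineq}. Beyond that, all remaining ingredients --- Golden--Thompson, self-duality of $\cc$, the inequality $1+x \leq \exp(x)$, and the variational formula for $\lambda_{\min}$ --- are already assembled earlier in the paper, so the argument is essentially a careful chaining together of these facts.
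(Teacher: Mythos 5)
Your proposal is correct and mirrors the paper's own potential-function proof of this theorem essentially step for step: same unnormalized weight potential, same Golden--Thompson peeling, same quadratic cone upper bound on $\expl(-\eta m_t)$ via Lemmas \ref{bound:_exp_real_quad} and \ref{lem:cone_ineq}, same $1+s\le\exp(s)$ telescoping, and same $\lambda_{\max}$-based lower bound converted to the competitor $u$ via Theorem \ref{thm:eig_ineq}. The only differences are cosmetic (a shifted index convention for $W_t$ versus $w_t$).
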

Before we provide a proof we make some remarks on the above theorem. Since $-e \preceq_{\cc} m_t \preceq_{\cc} e$ we have that $0 \preceq_{\cc} m_t^2 \preceq_{\cc}  e$, and so $\innerprod{ m_t^2 }{ p_t } \leq \innerprod{ e }{ p_t } = 1$. Combining this with Theorem \ref{thm:alter} we get
\[
		  \sum_{t=1}^{T} \la m_t,p_t-u\ra 
            \leq
		\eta T
		+ \frac{1}{\eta} \ln r,
	\]
and using the notion of eigenvalues in an EJA and the corresponding variational characterizations of $\lambda_{\max}  /\lambda_{\min}$ (e.g. see Theorem \ref{thm:eig_ineq}) the regret bound becomes:
\[
		\sum_{t=1}^T \innerprod{ m_t }{ p_t }
		-
		\lambda_{\min}(\sum_{t=1}^T m_t) 
            \leq
		\eta T
		+ \frac{1}{\eta} \ln r.
	\]
The stepsize $\eta$ that minimizes this bound is $\sqrt{\ln r / T}$, which gives regret $2 \sqrt{T \ln r}$.

\begin{proof}
	Let $\{w_t\}_{t=1}^T$ be the unnormalized \ref{SCMWU} updates $w_t = \expl(-\eta \sum_{i=1}^{t-1} m_{i})$, initialized at $w_{1} = e$. The \ref{SCMWU} updates $p_t$ and the unnormalized \ref{SCMWU} updates $w_t$ are related by the expression $p_t = \frac{w_t}{\tr(w_t)}$. We can bound $\tr(w_{t+1})$ in terms of $\tr(w_t)$ via the following sequence of steps, which we justify below:
	\begin{equation*}
	\begin{split}
		\tr(w_{t+1})
		&=
		\tr (
			\expl (
				-\eta \sum_{i = 1}^t m_{i}
			)
		)
		\\
		&\overset{S1}{\le}
		\tr (
			\expl (
				-\eta \sum_{i = 1}^{t-1} m_{i}
			)
			\circ
			\expl(
				-\eta m_t
			)
		)
		\\
		&=
		\innerprod{w_t}{\expl( -\eta m_t )}
		\\
		&\overset{S2}{\le}
		\innerprod{
			w_t
		}
		{
			e - \eta m_t + \eta^2 m_t^2
		 }
		 \\
		 &=
		 \tr(w_t)
		 - \eta \innerprod{m_t}{w_t}
		 + \eta^2 \innerprod{m_t^2}{w_t}
		 \\
		 &=
		 \tr(w_t)
		 \left[
		 	1 
			- \eta \innerprod{m_t}{p_t} 
			+ \eta^2 \innerprod{m_t^2}{p_t}
		 \right]
		 \\
		 &\overset{S3}{\le}
		 \tr(w_t)
		 \exp(
			- \eta \innerprod{m_t}{p_t} 
			+ \eta^2 \innerprod{m_t^2}{p_t}
		 ).
	\end{split}
	\end{equation*}
    {\bf S1:} This is due to the generalized Golden-Thompson inequality \cite{ART:TWK21}, which states that 
    \[
        {\rm tr}(\expl(x+y))\le {\rm tr}(\expl(x)\circ \expl(y)) \quad \forall x,y\in\cj.
    \]
    
{\bf S2:} This is due to the self-duality of the symmetric cone and the fact that $\exp(-s) \leq 1 - s + s^2 \; \forall s \geq -1$ (Lemma \ref{bound:_exp_real_quad}), so by Lemma \ref{lem:cone_ineq} $\expl(-x) \preceq_{\cc} e - x + x^2 \; \forall x \succeq_{\cc} -e$.

	{\bf S3:} This is due to the fact that $\exp(s) \geq 1 + s \; \forall s \in \mathbb{R}$.

    Then, by induction starting from the base case $\tr(w_{1}) = r$ we have that
	\begin{equation}
        \label{bound:_pot_upper}
		\tr( w_{T+1} ) \leq r
		\exp(-\eta \sum_{t=1}^T \innerprod{m_t}{p_t} +\eta^2 \sum_{t=1}^T \innerprod{m_t^2}{p_t}).
	\end{equation}
	
	On the other hand,
	\begin{equation}
        \label{bound:_pot_lower}
	   \tr( w_{T+1})  = \tr( \expl( -\eta \sum_{t=1}^T m_t))  \geq 
	\exp( -\eta \lambda_{\min}( \sum_{t=1}^T m_t))
	\end{equation}
	since $$-\eta \lambda_{\min} ( \sum_{t=1}^T m_t ) = -\lambda_{\min} ( \eta \sum_{t=1}^T m_t ) = \lambda_{\max} (- \eta \sum_{t=1}^T m_t )$$ and $\forall x \in \cj,$
 \begin{equation*}
	\tr(\expl(x))
        =
        \sum_{k=1}^r \lambda_k( \expl(x))
	=
	\sum_{k=1}^r \exp(\lambda_k(x)) \geq
	\exp(\lambda_{\max}(x)).
 \end{equation*}
	
    Combining the upper and lower bounds \eqref{bound:_pot_upper} and \eqref{bound:_pot_lower} for $\tr(w_{T+1})$ gives us
\[
		\exp(
			-\eta \lambda_{\min} ( \sum_{t=1}^T m_t )
		) 
		\leq
		r
		\exp (
			-\eta \sum_{t=1}^T \innerprod{m_t}{p_t} + \eta^2 \sum_{t=1}^T \innerprod{m_t^2}{p_t}
		)
\]
	from which it follows by taking logarithms that
	\[
		-\eta \lambda_{\min} ( \sum_{t=1}^T m_t ) \leq \ln r
		-\eta \sum_{t=1}^T \innerprod{m_t}{p_t} + \eta^2 \sum_{t=1}^T \innerprod{m_t^2}{p_t} .
	\]
	Finally, rearranging and dividing by $\eta$ gives us the regret bound
	\[
		\sum_{t=1}^T \innerprod{m_t}{p_t}
		\leq
		\lambda_{\min} ( \sum_{t=1}^T m_t )
		+ \eta \sum_{t=1}^T \innerprod{m_t^2}{p_t}
		+ \frac{\ln r}{\eta},
	\]
    which is equivalent by Theorem \ref{thm:eig_ineq} to the desired regret bound
	\[
		\sum_{t=1}^T \la m_t, p_t\ra
		 \leq \min_{u \in \cc,\tr(u) = 1} \sum_{t=1}^T \la m_t, u\ra
		+ \eta \sum_{t=1}^T \innerprod{m_t^2}{p_t}
		+ \frac{\ln r}{\eta} .
	\]
\end{proof}

\end{document}